\documentclass[a4paper, reqno, 11pt]{amsart}

\usepackage[UKenglish]{babel}
\usepackage{amsmath, amssymb, amsthm, amsfonts}
\usepackage{mathtools}
\usepackage{scrextend}
\usepackage[hidelinks]{hyperref}
\usepackage{xpatch}
\usepackage{color}
\usepackage{tikz-cd}
\usepackage[inline]{enumitem}
\usepackage{theoremref}
\usepackage{quiver}
\usepackage{ifthen}
\usepackage[textsize=tiny]{todonotes}

\usepackage[inline,final]{showlabels}
\showlabels{thlabel}

\xpatchcmd{\proof}{\itshape}{\normalfont\bfseries}{}{}
\newtheoremstyle{preview}{}{}{\itshape}{}{\bfseries}{.}{.5em}{#3, preview}

\newtheorem{theorem}{Theorem}[section]
\newtheorem{proposition}[theorem]{Proposition}
\newtheorem{lemma}[theorem]{Lemma}
\newtheorem{corollary}[theorem]{Corollary}
\newtheorem{fact}[theorem]{Fact}

\theoremstyle{definition}

\newtheorem{definition}[theorem]{Definition}
\newtheorem{remark}[theorem]{Remark}
\newtheorem{convention}[theorem]{Convention}

\newtheorem{question}[theorem]{Question}

\theoremstyle{preview}
\newtheorem*{preview-theorem}{Preview}

\newcommand{\A}{\mathcal{A}}

\newcommand{\C}{\mathcal{C}}
\newcommand{\D}{\mathcal{D}}

\newcommand{\K}{\mathcal{K}}
\renewcommand{\L}{\mathcal{L}}
\newcommand{\M}{\mathcal{M}}
\newcommand{\NN}{\mathfrak{N}}
\newcommand{\RR}{\mathcal{R}}
\newcommand{\X}{\mathcal{X}}
\newcommand{\Z}{\mathbb{Z}}

\newcommand{\Ab}{\mathbf{Ab}}
\newcommand{\Set}{\mathbf{Set}}
\newcommand{\Mod}{\mathbf{Mod}}
\newcommand{\Str}{\mathbf{Str}}
\newcommand{\Ind}{\mathbf{Ind}}

\newcommand{\Eff}[1][]{\ifthenelse{\equal{#1}{}}{\mathbf{Eff}}{#1\text{-}\mathbf{Eff}}}
\newcommand{\QuasiEff}[1][]{\ifthenelse{\equal{#1}{}}{\mathbf{QuasiEff}}{#1\text{-}\mathbf{QuasiEff}}}

\DeclareMathOperator{\colim}{colim}
\DeclareMathOperator{\dom}{dom}
\DeclareMathOperator{\cod}{cod}
\DeclareMathOperator{\Hom}{Hom}
\DeclareMathOperator{\rank}{rank}
\DeclareMathOperator{\Po}{Po}
\DeclareMathOperator{\Tc}{Tc}
\DeclareMathOperator{\Rt}{Rt}
\DeclareMathOperator{\cell}{cell}
\DeclareMathOperator{\cof}{cof}
\DeclareMathOperator{\cf}{cf}
\DeclareMathOperator{\Obj}{Obj}
\DeclareMathOperator{\Mor}{Mor}
\DeclareMathOperator{\Term}{Term}

\renewcommand{\phi}{\varphi}

\newcommand{\op}{{\textup{op}}}

\title{Cellular generation revisited}
\author{Sean Cox, Mark Kamsma, and Ji\v{r}\'{i} Rosick\'{y}}

\date{\today. \emph{MSC2020}: 03E75 (primary), 18C35}
\keywords{cofibrant generation, cellular generation, partial elementary subuniverse, effective square, stationary logic}

\address[Sean Cox]{Department of Mathematics and Applied Mathematics\newline
Virginia Commonwealth University\newline
1015 Floyd Ave, VA 23284, Richmond, USA}
\email[]{scox9@vcu.edu}
\urladdr{https://skolemhull.wordpress.com/}

\address[Mark Kamsma]{Department of Mathematics and Statistics\newline
Masaryk University, Faculty of Sciences\newline
Kotl\'{a}\v{r}sk\'{a} 2, 611 37, Brno, Czech Republic}
\email[]{mark@markkamsma.nl}
\urladdr{https://markkamsma.nl}

\address[Ji\v{r}\'{i} Rosick\'{y}]{Department of Mathematics and Statistics\newline
Masaryk University, Faculty of Sciences\newline
Kotl\'{a}\v{r}sk\'{a} 2, 611 37, Brno, Czech Republic}
\email[]{rosicky@math.muni.cz}
\urladdr{https://www.math.muni.cz/~rosicky/}

\thanks{\begin{minipage}{0.6\textwidth}
The first author is supported by NSF grant DMS-2154141. The second author is supported by Marie Sk\l{}odowska-Curie grant number 101130801.
\end{minipage}%
\begin{minipage}{0.4\textwidth}
\begin{center}
    \includegraphics[height=1.1cm]{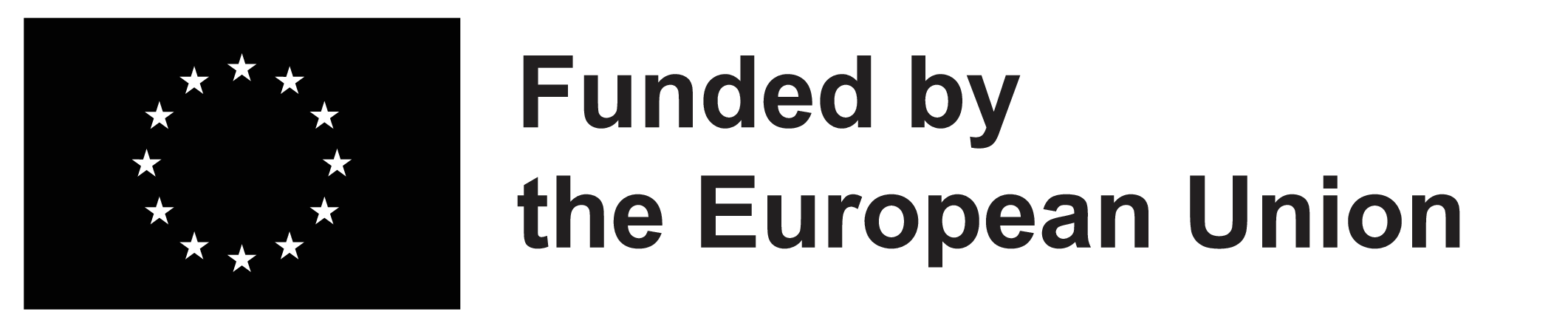}
\end{center}
\end{minipage}
\nopunct}
 
\begin{document}

\begin{abstract}
Cellular generation, which generalises cofibrant generation, is an important categorical smallness condition on a class of morphisms. A general challenge is to determine whether a given class of morphisms $\M$ is cellularly generated, in which \emph{$\M$-effective squares} are often useful. These are commuting squares consisting of morphisms in $\M$, so that the induced morphism from the pushout square is also in $\M$. When we drop the requirement that the vertical morphisms in the square are in $\M$ we obtain the weaker notion of \emph{$\M$-quasieffective square}. We prove that, in a locally presentable category, $\M$ is cellularly generated if and only if $\M$ is almost everywhere quasieffective. The latter is a set-theoretic condition stating that for almost every partial elementary set-theoretic subuniverse $\NN$, we have that restricting any morphism in $\M$ to $\NN$ yields an $\M$-quasieffective square. For locally finitely presentable categories this yields an additional categorical characterisation in terms of filtrations of $\M$-quasieffective squares.

If we additionally assume that $\M$ is continuous (i.e., the corresponding wide subcategory is closed under directed colimits) then we obtain a stronger characterisation of cellular generation in terms of accessibility of the category of $\M$-effective squares. This improves on a theorem by Lieberman, Vasey, and the third author.
\end{abstract}

\maketitle

\tableofcontents

\section{Introduction}
Smallness conditions have played an important role in category theory since its beginning. For instance, Freyd's \emph{solution set condition} guarantees that a limit preserving functor on a complete category has a left adjoint. Another example is Isbell's important concept of a small \emph{dense} subcategory (he called it left adequate). Both concepts can be found in \cite{maclaneCategoriesWorkingMathematician2013}---and the latter entered the definition of a \emph{locally presentable category} of Gabriel and Ulmer \cite{gabrielLokalPraesentierbareKategorien1971} or the more general concept of an \emph{accessible category} of Makkai and Par\'e \cite{makkaiAccessibleCategoriesFoundations1989a}. Recall that a category is locally $\lambda$-presentable if it is cocomplete and has a small dense subcategory formed by $\lambda$-presentable objects ($\lambda$ is a regular cardinal), while in $\lambda$-accessible categories cocompleteness reduces to the existence of $\lambda$-directed colimits. Additionally, the density reduces to the fact that every object is a $\lambda$-directed colimit of $\lambda$-presentable objects, but the requirement that there is only a set of them remains. Locally presentable and accessible categories pervaded many areas, including algebra, homotopy theory, and model theory. 

In homotopy theory, there is another smallness condition due to Quillen \cite{quillenHomotopicalAlgebra1967} called a \emph{small object argument}. Its basis is that a given class $\M$ of morphisms is generated from a set using pushouts, transfinite compositions, and retracts.  In this case, $\M$ is called \emph{cofibrantly generated}, or \emph{cellularly generated} if retracts are not used. In homotopy theory, if both cofibrations and trivial cofibrations are cofibrantly generated then they yield a model category which is called cofibrantly generated \cite{hoveyModelCategories2014}. A general challenge is to determine whether a given class of morphisms is cofibrantly generated. In 2000, Hovey asked for examples of model categories which are not cofibrantly generated. Soon after, such examples appeared---for instance in \cite{adamekWeakFactorizationSystems2002} based on the fact that embeddings in posets are not cofibrantly generated. 

In model theory, specifically in stability theory \cite{shelahClassificationTheoryNumber1990}, another smallness condition appears, often referred to as \emph{local character}. A central role in stability theory is played by independence relations, generalising for example linear independence in vector spaces and algebraic independence in algebraically closed fields. Local character means that, given subsets $A$ and $B$ of some structure such that $A$ is small (e.g., finite), then $A$ can only depend on a small part of $B$. In \cite{liebermanForkingIndependenceCategorical2019} a categorical approach to model-theoretic independence was started. There, the independence relation itself can be viewed as a category, and local character can be expressed as the accessibility of that category.

In \cite{barrCategoriesEffectiveUnions1988} Barr defined the notion of a category having \emph{effective unions}. This means that for any pullback square that consists of regular monomorphisms, like below on the left, the induced morphism from the relevant pushout is a regular monomorphism, as pictured below on the right.
\[\begin{tikzcd}[sep=small]
	A && B &&&& B \\
	&&&& A & P \\
	C && D && C & D
	\arrow[from=1-1, to=1-3]
	\arrow[curve={height=-6pt}, from=2-5, to=1-7]
	\arrow[from=2-5, to=2-6]
	\arrow[dashed, from=2-6, to=1-7]
	\arrow["\lrcorner"{anchor=center, pos=0.125, rotate=-90}, draw=none, from=2-6, to=3-5]
	\arrow[from=3-1, to=1-1]
	\arrow[from=3-1, to=3-3]
	\arrow[from=3-3, to=1-3]
	\arrow[from=3-5, to=2-5]
	\arrow[from=3-5, to=3-6]
	\arrow[curve={height=6pt}, from=3-6, to=1-7]
	\arrow[from=3-6, to=2-6]
\end{tikzcd}\]
Borceux and the third author showed that this condition, with regular monomorphisms replaced by pure monomorphisms, implies cofibrant generation of pure monomorphisms \cite{borceuxPurityAlgebra2007}. However, cofibrant generation is not equivalent to having effective unions (e.g., \cite[Example 2.5]{liebermanCofibrantGenerationPure2020}). Still, for a fixed class of morphisms $\M$, the idea of \emph{$\M$-effective squares} is useful in characterising cofibrant generation and cellular generation. By an $\M$-effective square we now mean a commuting square whose arrows are all in $\M$ and where the induced morphism from the relevant pushout is in $\M$. In both \cite{liebermanForkingIndependenceCategorical2019} and \cite{liebermanCellularCategoriesStable2023} these were shown to induce a model-theoretic independence relation, and \cite[Theorem 3.1]{liebermanCellularCategoriesStable2023} shows that a nicely behaved class $\M$ in a locally presentable category $\K$ is cofibrantly generated if and only if the induced independence relation, viewed as a category, is accessible. Hence, the result relates two seemingly disparate smallness conditions, one coming from homotopy theory and the other from model theory. By ``nicely behaved'' we mean that $\M$ is cofibrantly closed (i.e., closed under pushouts, transfinite compositions, and retracts), continuous (i.e., the wide subcategory $\K_\M$ of $\K$ having $\M$ as morphisms is closed under directed colimits in $\K$), and coherent (i.e., if $g, gf \in \M$ then $f \in \M$).

The current paper further elaborates this equivalence. We fix a gap in the proof of \cite[Theorem 3.1]{liebermanCellularCategoriesStable2023}. This fix is not evident and uses a \emph{fat small object argument} of \cite{makkaiFatSmallObject2014} which is a variation of a small object argument. Furthermore, we remove the assumptions of continuity and coherence of $\M$ at the expense of replacing $\M$-effective squares by a weaker notion of \emph{$\M$-quasieffective} squares, where only horizontal morphisms are in $\M$. Also, we have to replace accessibility of the resulting category by the existence of filtrations of objects by smaller objects. The proof combines categorical and set-theoretical methods. The set-theoretical tools that we rely on were developed by the first author, and are based on partially elementary subuniverses of the set-theoretic universe. They were used to prove the flat cover conjecture for certain categories of monoid acts \cite{coxFlatCoverConjecture2025}.

\textbf{Main results.} We discuss, and state, the main results of our paper. The statements are precise, although not all components are defined yet and the actual statements contain more details.

We recall that a \emph{filtration} of an object $A$ in a category is a chain of smaller objects such that $A$ is the colimit of this chain. Being ``smaller'' is made precise using the language of presentability of objects, which is a categorical way of measuring their size. Our main result is the following.
\begin{preview-theorem}[\thref{cellular-generation-lfp}]
Let $\K$ be a locally finitely presentable category and let $\M$ be a cellularly closed class of morphisms in $\K$. Then the following are equivalent.
\begin{enumerate}[label=(\roman*)]
\item The class $\M$ is cellularly generated.
\item There is a cardinal $\mu$ such that every $f \in \M$, with $\rank_{\K^2}(f) > \mu$, admits a smooth filtration in $\K^2$ that consists of $\M$-quasieffective squares.
\item The class $\M$ is almost everywhere quasieffective.
\end{enumerate}
\end{preview-theorem}
The condition of being almost everywhere quasieffective is a set-theoretic condition. It roughly means the following. Given a partially elementary subuniverse $\NN$ of the set-theoretic universe $V$ and a morphism $f: A \to B$ in $\mathfrak{N} \cap \M$, we can make sense of the restriction $f \restriction \NN$ of $f$ to $\NN$, which boils down to viewing $A$ and $B$ as some concrete objects (e.g., as models of some first-order theory) and intersecting their underlying sets with $\NN$. We can then form the square below and ask whether it is $\M$-quasieffective.
\[\begin{tikzcd}[sep=small]
	A & B \\
	{A \restriction \NN} & {B \restriction \NN}
	\arrow["f", from=1-1, to=1-2]
	\arrow[hook, from=2-1, to=1-1]
	\arrow["{f \restriction \NN}"', from=2-1, to=2-2]
	\arrow[hook, from=2-2, to=1-2]
\end{tikzcd}\]
We then say that $\M$ is almost everywhere quasieffective if this holds for ``almost all" $\mathfrak{N}$, i.e., there is a set of parameters $p$, such that for any partially elementary subuniverse $\NN$ that contains $p$ and for any $f \in \mathfrak{N} \cap \M$ the square above is $\M$-quasieffective.

In the generality of locally presentable categories, we obtain a similar theorem, but we have to give up the condition about filtrations (see also \thref{locally-presentable-filtrations} and the preceding discussion).
\begin{preview-theorem}[\thref{cellular-generation-any-lp}]
Let $\K$ be a locally $\lambda$-presentable category and let $\M$ be a cellularly closed class of morphisms in $\K$. Then the following are equivalent.
\begin{enumerate}[label=(\roman*)]
\item The class $\M$ is cellularly generated.
\item The class $\M$ is almost everywhere quasieffective.
\end{enumerate}
\end{preview-theorem}
Finally, we fix the gap in \cite[Theorem 3.1]{liebermanCellularCategoriesStable2023}, see \thref{the-gap} for details. We also improve on \cite[Theorem 3.1]{liebermanCellularCategoriesStable2023}, which is about cofibrant generation, by providing a version for cellular generation.
\begin{preview-theorem}[\thref{cofibrant-generation-continuous}]
Let $\K$ be a locally presentable category and let $\M$ be a coherent continuous cofibrantly closed class of morphisms in $\K$. Then the following are equivalent.
\begin{enumerate}[label=(\roman*)]
\item The class $\M$ is cofibrantly generated and $\K_\M$ is accessible (with directed colimits).
\item The category $\Eff[\M](\K)$ is accessible (with directed colimits).
\end{enumerate}
\end{preview-theorem}
\begin{preview-theorem}[\thref{cellular-generation-continuous}]
Let $\K$ be a locally presentable category and let $\M$ be a coherent continuous cellularly closed class of monomorphisms in $\K$. Then the following are equivalent.
\begin{enumerate}[label=(\roman*)]
\item The class $\M$ is cellularly generated and $\K_\M$ is accessible (with directed colimits).
\item The category $\Eff[\M](\K)$ is accessible (with directed colimits)
\end{enumerate}
\end{preview-theorem}
Here $\Eff[\M](\K)$ is the category whose objects are morphisms in $\M$ and whose morphisms are $\M$-effective squares. The proofs of these theorems are purely categorical and are thus separate from the first two main results.

Finally, we note that our theorems are about cellular generation, instead of cofibrant generation. This is more general, because a class of morphisms is cofibrantly generated if and only if it is cellularly generated and is closed under retracts (see \thref{elimination-of-retracts}).

\textbf{Overview.} We discuss the preliminaries in Section \ref{sec_SetTheorySetup}, split up in categorical preliminaries (Section \ref{sec:categorical-preliminaries}) and set-theoretical preliminaries (Section \ref{sec_SetTheorySetup}). In Section \ref{sec:cellular-generation-lfp} we state and prove our main theorem: the characterisation of cellular generation in locally finitely presentable categories. We provide a full statement of the theorem as early as possible in the section, after which we prove its components individually in multiple lemmas and propositions. The statements of the individual components are a bit more detailed, for example with more information about the parameters involved or a slightly more general setup because a certain assumption is not used for that particular component. We take the same approach in the remaining sections. Section \ref{sec_AnyLocallyPresent} provides a version of our main theorem for all locally presentable categories. It makes essential use of our main theorem, as the proof strategy is to present the arbitrary locally presentable category $\K$ as a full reflective subcategory of a presheaf category. Since presheaf categories are always locally finitely presentable, we can then apply our main theorem to the presheaf category, and the transfer the characterisation via the reflection functor. In Section \ref{sec:cellular-generation-for-continuous-classes-of-morphisms} we provide the improved version of \cite[Theorem 3.1]{liebermanCellularCategoriesStable2023}. The proofs in this section are purely categorical. 

In Appendix \ref{sec_LevyComplexity} we provide some computations of the L\'evy complexities of certain statements that we use throughout the paper.

\section{Preliminaries}
\label{sec:preliminaries}

\subsection{Categorical preliminaries}
\label{sec:categorical-preliminaries}
We work in the context of locally presentable and accessible categories. We recall the basic definitions and refer to \cite{adamekLocallyPresentableAccessible1994} for further reference.
\begin{definition}
\thlabel{presentability}
Let $\lambda$ be a regular cardinal. An object $A$ in a category $\K$ is called \emph{$\lambda$-presentable} if $\Hom(A, -): \K \to \Set$ preserves $\lambda$-directed colimits. If $\lambda = \omega$ we will also say that $A$ is \emph{finitely presentable}.

For any cardinal $\kappa$ we call an object \emph{$(< \kappa)$-presentable} if it is $\lambda$-presentable for some $\lambda < \kappa$.
\end{definition}
Intuitively, $A$ being $\lambda$-presentable means that it ``has size $<\lambda$'', see for example \thref{limit-theory-standard-facts}(iii). Unfolding \thref{presentability} yields the following more practical formulation, which we will often use. An object $A$ is $\lambda$-presentable if any morphism $f: A \to B$ into a $\lambda$-directed colimit $B = \colim_{i \in I} B_i$ factors essentially uniquely through the diagram $(B_i)_{i \in I}$. That is, $f$ factors as $A \xrightarrow{f'} B_i \to B$ for some $i \in I$ and this factorisation is \emph{essentially unique}, meaning that for any other factorisation $A \xrightarrow{f''} B_i \to B$ of $f$ there is $j \in I$ such that $A \xrightarrow{f'} B_i \to B_j$ is the same as $A \xrightarrow{f''} B_i \to B_j$.
\begin{definition}
\thlabel{locally-presentable-accessible-category}
Let $\lambda$ be a regular cardinal. A category $\K$ is called \emph{$\lambda$-accessible} if:
\begin{enumerate}
\item it has $\lambda$-directed colimits,
\item there is a set $\A$ of $\lambda$-presentable objects such that every object in $\K$ is a $\lambda$-directed colimit of objects in $\A$.
\end{enumerate}
We call $\K$ \emph{accessible} if it is $\lambda$-accessible for some $\lambda$.

A category is called \emph{locally ($\lambda$-)presentable} if it is ($\lambda$-)accessible and it is cocomplete. As before, if $\lambda = \omega$ we speak of \emph{finitely accessible} and \emph{locally finitely presentable} categories instead.
\end{definition}
Locally presentable categories are also complete, but this is a consequence of the definition and we will have no use for this fact. What is important to us is that locally presentable categories can generally be described as the category of models of a certain type of theory.
\begin{definition}
\thlabel{limit-theory}
Fix some language $\L$. A \emph{limit sentence} in $\L$ is one of the form
\[
\forall \bar{x} (\phi(\bar{x}) \to \exists!\bar{y} \psi(\bar{x}, \bar{y})),
\]
where $\phi(\bar{x})$ and $\psi(\bar{x}, \bar{y})$ are conjunctions of atomic formulas (or $\top$, i.e., ``truth''). A \emph{limit theory} $T$ is a set of limit sentences. We write $\Mod(T)$ for the category of models of $T$, and the morphisms are homomorphisms of $\L$-structures.
\end{definition}
Languages may be multi-sorted, which requires no special treatment. When considering languages, logical formulas, and theories we stick to finitary logic (unless explicitly mentioned otherwise). While infinitary logic has a rich interaction with locally presentable categories, we will only be interested in the finitary case.
\begin{fact}[{\cite[Theorem 5.9]{adamekLocallyPresentableAccessible1994}}]
\thlabel{lfp-iff-models-of-limit-theory}
A category is locally finitely presentable if and only if it is equivalent to the category of models $\Mod(T)$ of a limit theory $T$.
\end{fact}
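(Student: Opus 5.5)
Both directions go through the finitely presentable objects of the category, using the standard theory of \cite{adamekLocallyPresentableAccessible1994}.

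\emph{From models to local finite presentability.} Let $T$ be a limit theory. First, $\Mod(T)$ is closed in $\Str(\L)$ under limits, computed sortwise as in $\Set$: the unique-existence clause is preserved by products and equalisers. It is also closed under directed colimits, again computed sortwise. This works because each limit sentence involves only finitely many variables. Given a tuple $\bar x$ in the colimit satisfying $\phi$, it comes from some stage, and since $\phi$ is a finite conjunction of atomic formulas we may assume $\phi$ already holds there. The witness $\bar y$ exists at that stage and maps to a witness in the colimit. Uniqueness in the colimit follows because two witnesses, and the finitely many atomic facts they satisfy, already appear at a common later stage, where uniqueness identifies them.

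Next we need that $\Mod(T)$ is reflective in $\Str(\L)$, hence cocomplete. One way is to build the free $T$-model on a structure by a transfinite iteration: repeatedly adjoin witnesses $\bar y$ for each instance of $\phi$, and then quotient to enforce uniqueness. This stabilises after $\omega$ steps because the sentences are finitary. Alternatively, the reflection can be read off from the essentially algebraic presentation.

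For the generators, take models freely generated by a finite set of elements subject to finitely many atomic relations. By the construction above, these are exactly the reflections of finite "presentations", and they are finitely presentable because $\Hom$ out of such a model is determined by finitely many elements satisfying finitely many atomic relations, which commutes with directed colimits. Every model is the directed colimit of the diagram of finite presentations mapping into it, so these objects form a small dense set. Hence $\Mod(T)$ is locally finitely presentable.

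\emph{From local finite presentability to models.} Let $\K$ be locally finitely presentable and let $\A$ be a small representative set of finitely presentable objects. Take the many-sorted language with one sort for each $A\in\A$ and a unary function symbol $s\colon A'\to A$ for each morphism $A\to A'$ in $\A$. Then $\Str(\L)$ restricted by the functoriality axioms ($\forall x\, (s_{\mathrm{id}}x=x)$ and composition laws) is the category of presheaves $\Set^{\A^\op}$. The functor $E\colon\K\to\Set^{\A^\op}$, $K\mapsto\Hom(-,K)|_\A$, is fully faithful, and its image consists of the presheaves preserving the finite limits that exist in $\A^\op$, that is, those sending finite colimits in $\A$ to limits. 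Here we use that $\A$ may be taken closed under finite colimits, together with the Gabriel–Ulmer description of $\K$ as $\mathrm{Lex}(\A^\op,\Set)$.

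Preservation of each finite limit is expressible by limit sentences. For a finite colimit $C$ of a finite diagram $D$ in $\A$, with cocone maps $c_d\colon D_d\to C$, the axiom reads: for all tuples $(x_d)$ compatible along $D$ (a conjunction of equations), there exists a unique $y$ of sort $C$ with $s_{c_d}(y)=x_d$ for all $d$. So $\K\simeq\Mod(T)$, where $T$ collects the functoriality equations and these sentences.

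\emph{Main obstacle.} The main obstacle is the identification $\K\simeq\mathrm{Lex}(\A^\op,\Set)$. I would simply invoke it from \cite{adamekLocallyPresentableAccessible1994} rather than reprove it. The cocompleteness of $\Mod(T)$ in the first direction is the other step needing care.
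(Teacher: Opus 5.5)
The paper gives no proof of this Fact and simply cites \cite[Theorem 5.9]{adamekLocallyPresentableAccessible1994}. Your argument is correct and is essentially the standard one behind that citation: closure of $\Mod(T)$ in $\Str(\L)$ under limits and directed colimits plus a reflection for one direction, and the Gabriel--Ulmer description $\K \simeq \mathrm{Lex}(\A^\op,\Set)$ axiomatised in the presheaf language for the other. The only wording to fix is ``stabilises after $\omega$ steps'': the $\omega$-chain of witness-adjunctions and quotients need not become constant; finitarity instead guarantees that its colimit is already a model, which is exactly the construction used for $\Phi_2$ in the appendix.
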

We will often use the following standard facts implicitly.
\begin{fact}
\thlabel{limit-theory-standard-facts}
Let $T$ be a limit theory in a language $\L$.
\begin{enumerate}[label=(\roman*)]
\item Any substructure of a model of $T$ is a model of $T$.
\item Directed colimits in $\Mod(T)$ are computed the same as in the category of $\L$-structures.
\item Let $\lambda > |\L| + \aleph_0$ be a regular cardinal. Then a model $M$ of $T$ is $\lambda$-presentable in $\Mod(T)$ if and only if $|M| < \lambda$.
\end{enumerate}
\end{fact}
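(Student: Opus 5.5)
We treat the three items of \thref{limit-theory-standard-facts} separately. All three follow directly from the shape of a limit sentence and the usual description of colimits of $\L$-structures. Only (iii) needs real work.

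For (i), let $N \subseteq M$ be a substructure with $M \models T$, and take a sentence $\forall \bar{x}(\phi(\bar{x}) \to \exists! \bar{y}\, \psi(\bar{x},\bar{y}))$ in $T$. Let $\bar{a} \in N$ with $N \models \phi(\bar{a})$. Atomic formulas are preserved and reflected along embeddings, so $M \models \phi(\bar{a})$, and hence there is a unique $\bar{b} \in M$ with $M \models \psi(\bar{a},\bar{b})$. Uniqueness in $N$ is inherited from $M$, since atomic formulas are reflected. The remaining point is that $\bar{b}$ lies in $N$.

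This is where one must use the exact reading of the sentences. I would argue that in the intended setting (as in \cite{adamekLocallyPresentableAccessible1994}, where limit theories in relational or partial-operation form are used), the witnesses $\bar{y}$ are terms in $\bar{x}$. Alternatively, one can interpret ``substructure'' as in the paper's later use, namely substructures closed under the unique-existence witnesses. I expect this to be the subtle point. I would check it against \cite[Chapter 3/5]{adamekLocallyPresentableAccessible1994} and, if needed, restrict to substructures closed under these witnesses; that restriction is all the paper uses later.

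For (ii), I would recall that a directed colimit of $\L$-structures is the directed union of the underlying sets, modulo identification, with the induced interpretation of symbols. I then verify that it satisfies each limit sentence. A tuple $\bar{a}$ satisfying $\phi$ in the colimit already comes from some stage $M_i$ where $\phi$ holds, because $\phi$ is a finite conjunction of atomic formulas and the index set is directed. The witness $\bar{b}$ found in $M_i$ maps to a witness in the colimit. For uniqueness, two witnesses in the colimit both come from some common stage $M_j$, where uniqueness makes them equal. So the $\L$-structure colimit lies in $\Mod(T)$. Since $\Mod(T)$ is a full subcategory, it is also the colimit there.

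For (iii), first suppose $|M| < \lambda$. A homomorphism $M \to \colim_i N_i$ over a $\lambda$-directed diagram sends fewer than $\lambda$ elements into the union, so all images lie in a single stage $N_i$ by $\lambda$-directedness. Since $|\L| < \lambda$, the fewer than $\lambda$ atomic facts to be preserved are each witnessed at some stage, and so they all hold at a common later stage. This gives a factorisation, and the same argument gives essential uniqueness. Conversely, suppose $M$ is $\lambda$-presentable. By (i) and (ii), $M$ is the $\lambda$-directed union of its substructures generated (under the witnesses) by subsets of size $<\lambda$. These have size $<\lambda$ because $\lambda > |\L| + \aleph_0$ and closure takes countably many steps. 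The identity on $M$ therefore factors through one of them, so $|M| < \lambda$.
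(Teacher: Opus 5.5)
Your hesitation over (i) is justified, and the problem is not one of interpretation: as literally stated, (i) is false for limit theories in the sense of the paper's definition, so no argument can prove it.

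Take $\L=\{\cdot,e\}$ and let $T$ consist of the monoid equations together with the limit sentence $\forall x\,(\top\to\exists! y\,(x\cdot y=e\wedge y\cdot x=e))$. Then $(\mathbb{N},+,0)$ is a substructure of the model $(\Z,+,0)$, but it is not a model. A purely relational example works too: for $T=\{\forall x\,(\top\to\exists! y\,R(x,y))\}$, a one-point subset of $M=\{0,1\}$ with $R^M=\{(0,1),(1,1)\}$ already fails.

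This also rules out your first proposed way out. Nothing in the definition of a limit theory, in the paper or in Ad\'amek--Rosick\'y, forces the witnesses $\bar y$ to be terms in $\bar x$. For the same reason, the paper's one-line justification, that a limit theory is ``in particular universally axiomatised'', is not correct.

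What is true is your second option, and you should state it outright rather than hedge. Call it (i$'$): if $N\subseteq M\models T$ is a substructure such that, for every sentence of $T$ and every $\bar a$ in $N$ with $M\models\phi(\bar a)$, the unique witness $\bar b$ lies in $N$, then $N\models T$. Your argument proves exactly this: atomic formulas are preserved and reflected, the witness lies in $N$ by closure, and uniqueness is inherited from $M$.

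For (ii) and (iii) you follow the paper's route, with more detail. For (ii) the paper only points to the proof of Ad\'amek--Rosick\'y's Theorem 5.9. Your direct verification is correct: a conjunction of atomic formulas holding in the directed colimit already holds at some stage, witnesses push forward, two witnesses meet at a common stage where uniqueness applies, and $\Mod(T)$ is full in $\Str(\L)$. Your proof of the direction of (iii) that the paper calls standard is also correct.

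For the other direction of (iii) you use the paper's idea: the identity of $M$ factors through a small member of a $\lambda$-directed family of submodels. Your version is the one that actually works. The paper takes the substructures $\langle A\rangle$ generated under the function symbols and appeals to (i), which fails for them. Closing under the witnesses as well gives models by (i$'$). These still have size at most $|A|+|\L|+\aleph_0<\lambda$, using $|T|\le|\L|+\aleph_0$ and $\cf(\lambda)>\omega$. By regularity of $\lambda$ they form a $\lambda$-directed family whose union is $M$, and by (ii) that union is also its colimit in $\Mod(T)$.

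Elsewhere the paper's substructures are restrictions $A\restriction\NN$, which are elementary substructures and hence models without appeal to (i). So replacing (i) by (i$'$) costs nothing in the rest of the paper.
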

\begin{proof}
Item (i) follows because a limit theory is in particular universally axiomatised. Item (ii) is verified in the proof of \cite[Theorem 5.9]{adamekLocallyPresentableAccessible1994}. We prove (iii).

By our assumption on $\lambda$, the free $\L$-structure on $< \lambda$ generators has cardinality $< \lambda$. So for any $A \subseteq M$ with $|A| < \lambda$, the substructure $\langle A \rangle \subseteq M$ generated by $A$ satisfies $|\langle A \rangle| < \lambda$. So $M$ is the $\lambda$-directed colimit in $\Mod(T)$ of all its substructures of cardinality $< \lambda$, see also (i). So if $M$ is $\lambda$-presentable then the identity on $M$ must factor through this diagram, and so $|M| < \lambda$. The converse is standard.
\end{proof}

\begin{definition}
\thlabel{presentability-rank}
The \emph{presentability rank} of an object $A$ in an accessible category $\K$ is the smallest regular cardinal $\lambda$ such that $A$ is $\lambda$-presentable. We write $\rank_\K(A)$ for the presentability rank of $A$ in $\K$.
\end{definition}
\begin{fact}[{\cite[Lemma 4.2]{bekeAbstractElementaryClasses2012}}]
\thlabel{successor-presentability-rank}
In a $\lambda$-accessible category with directed colimits the presentability rank of any object that is not $\lambda$-presentable is a successor cardinal.
\end{fact}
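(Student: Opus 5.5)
The plan is to write $A$ as a smooth chain of $\lambda$-presentable pieces and read the presentability rank off the length of a chain that cannot be shortened. Fix a $\lambda$-accessible category $\K$ with directed colimits and an object $A$ that is not $\lambda$-presentable, and let $\kappa = \rank_\K(A)$. Then $\kappa > \lambda$ and $\kappa$ is regular. Suppose, towards a contradiction, that $\kappa$ is a limit cardinal, so $\kappa$ is weakly inaccessible and $\kappa > \lambda$.

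\textbf{Step 1 (presentation).} Write $A$ as a $\lambda$-directed colimit of a diagram $D: I \to \K$ of $\lambda$-presentable objects. By the standard argument (e.g.\ \cite[Remark 2.15 and Proposition 2.16]{adamekLocallyPresentableAccessible1994}), since $A$ is $\kappa$-presentable we may replace $I$ by a $\lambda$-directed subposet of size $<\kappa$. Indeed, $A$ is a retract of the colimit over a $\lambda$-directed subdiagram of size $<\kappa$. Refining this, we may take $I$ itself with $|I| = \mu < \kappa$.

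\textbf{Step 2 (key step).} Since $\kappa$ is a limit cardinal, $\mu^+ < \kappa$. Let $\nu$ be a regular cardinal with $\lambda \le \nu$ and $\mu < \nu < \kappa$; for instance $\nu = \max(\lambda,\mu^+)$. We want to show that $A$ is $\nu$-presentable, which contradicts the minimality of $\kappa$. A colimit of fewer than $\nu$ objects that are $\nu$-presentable is again $\nu$-presentable. The objects $D(i)$ are $\lambda$-presentable, hence $\nu$-presentable, and the diagram has size $\mu < \nu$. So $A = \colim D$ is $\nu$-presentable, provided the colimit of this small diagram is computed by $\nu$-small colimits.

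This is the main obstacle. The category $\K$ has only directed colimits, not arbitrary small ones. However, $A$ is a directed colimit of a diagram with fewer than $\nu$ objects and morphisms. The $\nu$-presentability then follows directly from the definition. Take a map $A \to \colim_{j\in J} B_j$ with $J$ $\nu$-directed. Each composite $D(i) \to A \to \colim B_j$ factors essentially uniquely through some $B_j$. There are fewer than $\nu$ indices $i$ and fewer than $\nu$ compatibility equations, one for each morphism $i \le i'$ in $I$. By $\nu$-directedness we can choose a single $j$ through which all factorisations pass and become compatible, giving a cocone $D \to B_j$ and hence a map $A \to B_j$. Essential uniqueness follows in the same way. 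If the morphisms of $I$ are not fewer than $\nu$, note that $I$ is a poset of size $\mu$, so it has at most $\mu$ relations. Thus $A$ is $\nu$-presentable with $\nu < \kappa$, a contradiction. Hence $\kappa$ is not a limit cardinal, so it is a successor cardinal.
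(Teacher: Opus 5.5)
Your overall strategy is the right one: assume $\kappa=\rank_\K(A)$ is a regular limit cardinal above $\lambda$, exhibit $A$ as a retract of a colimit of fewer than $\kappa$ many $\lambda$-presentables, and then apply \cite[Proposition 1.16]{adamekLocallyPresentableAccessible1994} with $\nu=\max(\lambda,\mu^+)<\kappa$. However, Step 1 has a genuine gap, and it is exactly where the hypothesis that $\K$ has \emph{all} directed colimits must be used.

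To get $A$ as a retract of $\colim_J D$ for a $\lambda$-directed $J\subseteq I$ with $|J|<\kappa$, the standard argument needs the $\lambda$-directed subsets of $I$ of size $<\kappa$ to form a $\kappa$-directed cover of $I$. In other words, every $S\subseteq I$ with $|S|<\kappa$ must lie in a $\lambda$-directed $J$ with $|J|<\kappa$. If you close $S$ under upper bounds of all its subsets of size $<\lambda$, you may add $|S|^{<\lambda}\ge 2^{\aleph_0}$ elements, which can be $\ge\kappa$. Avoiding this in general is precisely the condition $\lambda\triangleleft\kappa$ ($\lambda$ sharply smaller than $\kappa$) behind the representation you invoke from \cite{adamekLocallyPresentableAccessible1994}. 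You have not verified that condition, and nothing in your setup provides it for a merely weakly inaccessible $\kappa$.

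A telling symptom is that your argument never genuinely uses directed colimits. The ``main obstacle'' you address in Step 2 is not real: the colimit over a $\lambda$-directed $J$ already exists in any $\lambda$-accessible category.

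The repair is to pass to merely directed subsets:
\begin{enumerate}[label=(\alph*)]
\item Any $S\subseteq I$ with $|S|<\kappa$ is contained in a directed $J\subseteq I$ with $|J|\le|S|+\aleph_0<\kappa$: add an upper bound for each pair, and repeat $\omega$ times.
\item Hence the directed subsets of size $<\kappa$ form a $\kappa$-directed family covering $I$.
\item Because $\K$ has directed colimits, each $A_J=\colim_J D$ exists, and $A$ is the $\kappa$-directed colimit of the $A_J$.
\item As $A$ is $\kappa$-presentable, $\mathrm{id}_A$ factors through some $A_J\to A$, so $A$ is a retract of $A_J$ with $|J|=\mu<\kappa$.
\item Your Step 2 argument (equivalently, \cite[Proposition 1.16]{adamekLocallyPresentableAccessible1994}) then shows that $A_J$ is $\nu$-presentable.
\end{enumerate}

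To finish you need one sentence you left out: a retract of a $\nu$-presentable object is $\nu$-presentable. Relatedly, your claim ``refining this, we may take $I$ itself with $|I|=\mu$'' is unjustified, since you only obtained $A$ as a retract and not as a small colimit itself. The retract-closure observation makes that claim unnecessary.
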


\begin{definition}
\thlabel{chains-and-filtrations}
We call an ordinal indexed diagram $(A_i)_{i < \delta}$ a \emph{chain}. We call such a chain \emph{smooth} if $A_\ell = \colim_{i < \ell} A_i$ for all limit ordinals $\ell < \delta$.

We call a smooth chain $(A_i)_{i < \delta}$ a \emph{smooth $(< \kappa)$-small chain} if $\delta < \kappa$ and $A_i$ is $(< \kappa)$-presentable for all $i < \delta$. A \emph{smooth $\kappa$-small chain} is a smooth $(< \kappa^+)$-small chain, i.e., $\delta \leq \kappa$ and $A_i$ is $\kappa$-presentable for all $i < \delta$.

Let $A$ be an object in some accessible category $\K$ and write $\lambda := \rank_\K(A)$. A \emph{smooth filtration of $A$} is a smooth $(< \lambda)$-small chain with colimit $A$. We say that \emph{$A$ admits a smooth filtration} if there is a smooth filtration of $A$.
\end{definition}

\begin{definition}
\thlabel{composable-class}
Let $\M$ be a class of morphisms in a category $\K$.
\begin{itemize}
\item If $\M$ is closed under composition and contains all isomorphisms we call it \emph{composable}.
\item If $gf \in \M$ implies that $f \in \M$ we call it \emph{left-cancellable}.
\item If $gf,g \in \M$ implies that $f \in \M$ we call it \emph{coherent}.
\end{itemize}
Given a composable class of morphisms $\M$ we write $\K_\M$ for the wide subcategory of $\K$ on the morphisms in $\M$. That is, $\K_\M$ has the same objects as $\K$, but only the morphisms in $\M$.
\end{definition}
We collect some useful facts that may be considered standard knowledge, but we recall them here for the reader's convenience. Recall that $\K^2$ denotes the category of morphisms of $\K$. That is, its objects are morphisms in $\K$ and its morphisms are commutative squares.
\begin{fact}
\thlabel{presentability-facts}
Let $\K$ be any category.
\begin{enumerate}[label=(\roman*)]
\item The colimit (if it exists) of a $\lambda$-small diagram (i.e., the diagram has $< \lambda$ many objects and morphisms) that consists of $\lambda$-presentable objects is again $\lambda$-presentable.
\item Colimits in $\K^2$ are computed componentwise. That is, for a diagram $(f_i: A_i \to B_i)$ in $\K^2$ such that $K = \colim_{i \in I} A_i$ and $L = \colim_{i \in I} B_i$ exist in $\K$, the induced morphism $f: A \to B$ is the colimit in $\K^2$.
\end{enumerate}
\end{fact}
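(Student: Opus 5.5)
The two items are independent, so I would prove them separately, beginning with (ii) since it is definitional. A diagram in $\K^2$ is a functor $D\colon I \to \K^2$, which I would write as $i \mapsto (f_i\colon A_i \to B_i)$. Composing with the domain and codomain functors $\dom, \cod\colon \K^2 \to \K$ gives two diagrams $(A_i)$ and $(B_i)$ in $\K$, and the morphisms of $D$ supply a natural transformation between them. Put $A := \colim_i A_i$ and $B := \colim_i B_i$ with coprojections $a_i$ and $b_i$. The family $b_i f_i\colon A_i \to B$ is a cocone, by naturality of the squares, so there is a unique $f\colon A \to B$ with $f a_i = b_i f_i$. The pairs $(a_i, b_i)$ are then commuting squares $f_i \to f$, and they form a cocone in $\K^2$. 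To check the universal property, take a cocone $(u_i, v_i)\colon f_i \to (g\colon X \to Y)$. The $u_i$ form a cocone on $(A_i)$ and the $v_i$ a cocone on $(B_i)$, so they induce unique maps $u\colon A \to X$ and $v\colon B \to Y$. It remains to see that $(u, v)$ is a square, i.e. that $g u = v f$. Both sides agree after precomposing with each $a_i$, because $g u_i = v_i f_i$. By the uniqueness part of the universal property of $A$, they are equal. Uniqueness of $(u,v)$ as a map in $\K^2$ is immediate from uniqueness of $u$ and of $v$.

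For (i), let $D\colon I \to \K$ be $\lambda$-small with each $D_i$ $\lambda$-presentable, and let $C = \colim D$. I would use the natural isomorphism $\Hom(C, -) \cong \lim_{i \in I^{\op}} \Hom(D_i, -)$. Now take a $\lambda$-directed colimit $\colim_{j} E_j$. Each $\Hom(D_i, -)$ carries it to $\colim_j \Hom(D_i, E_j)$. The key step is that in $\Set$, $\lambda$-small limits commute with $\lambda$-directed colimits, which is a standard fact. Applying it gives
\[\lim_i \colim_j \Hom(D_i, E_j) \cong \colim_j \lim_i \Hom(D_i, E_j) \cong \colim_j \Hom(C, E_j).\]
I would also check that this composite isomorphism is the canonical comparison map, which follows from naturality. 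That shows $C$ is $\lambda$-presentable.

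If I wanted to avoid citing the commutation of limits and colimits, I would argue directly with the factorisation formulation given after \thref{presentability}. Given $h\colon C \to \colim_j E_j$, each composite $h \circ (\text{coprojection } D_i \to C)$ factors through some $E_{j_i}$. There are fewer than $\lambda$ indices $i$, so $\lambda$-directedness gives a single $j$ above all the $j_i$. Then, for each of the fewer than $\lambda$ morphisms of $I$, essential uniqueness lets me move further up the index category until the resulting maps $D_i \to E_j$ form a cocone. Those choices again number fewer than $\lambda$, so one more upper bound suffices. Essential uniqueness of the factorisation of $h$ itself is handled the same way.

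The only real obstacle is this bookkeeping of fewer than $\lambda$ upper bounds in the direct version, which is exactly what $\lambda$-directedness (together with regularity of $\lambda$) controls. Part (ii) is entirely routine.
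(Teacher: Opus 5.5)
Your proposal is correct and matches the paper, which proves (i) only by citing Ad\'amek--Rosick\'y, Proposition 1.16. That result is proved exactly as in your first argument, via $\Hom(\colim D,-)\cong\lim_i\Hom(D_i,-)$ and the commutation of $\lambda$-small limits with $\lambda$-directed colimits in $\Set$. The paper calls (ii) well known, and your componentwise universal-property check is the standard verification of it.
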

\begin{proof}
Item (i) is \cite[Proposition 1.16]{adamekLocallyPresentableAccessible1994}, and item (ii) is well known.
\end{proof}
\begin{definition}
\thlabel{m-sub-lambda}
Let $\M$ be a class of morphisms in a category $\K$. For any cardinal $\lambda$ we write $\M_\lambda$ and $\M_{< \lambda}$ for the classes of morphisms in $\M$ that are, respectively, $(< \lambda^+)$-presentable and $(< \lambda)$-presentable in $\K^2$. Here $\K^2$ is the category of morphisms of $\K$. That is, its objects are morphisms in $\K$ and its morphisms are commutative squares.
\end{definition}
If $\lambda$ is a regular cardinal then $M_\lambda$ simplifies to the class of $\lambda$-presentable morphisms. The reason for the above formulation is that it also makes sense for singular cardinals. The following fact shows that the presentability of a morphism can easily be computed in terms of the presentability of its domain and codomain.
\begin{lemma}
\thlabel{presentability-of-arrows}
If a category $\K$ has the amalgamation property (i.e., any span of morphisms can be completed to a commuting square) then a morphism $f: A \to B$ is $\lambda$-presentable as an object of $\K^2$ if and only if $A$ and $B$ are both $\lambda$-presentable as objects of $\K$.
\end{lemma}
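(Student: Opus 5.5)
The plan is to treat the two implications separately, working directly with the hom-functor characterisation of $\lambda$-presentability and with componentwise colimits in $\K^2$ (\thref{presentability-facts}(ii)).

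\emph{($\Leftarrow$)} Assume $A$ and $B$ are $\lambda$-presentable and let $(g_i: C_i \to D_i)_{i \in I}$ be a $\lambda$-directed diagram in $\K^2$ with colimit $g: C \to D$, where $C = \colim C_i$ and $D = \colim D_i$. A morphism $f \to g$ in $\K^2$ is a pair $(u: A \to C, v: B \to D)$ with $gu = vf$. Since $A$ is $\lambda$-presentable, $u$ factors through some $C_i$, and similarly $v$ factors through some $D_j$; by directedness we may take a common index $k$. The two composites $A \to C_k \to D_k$ and $A \to B \to D_k$ agree after passing to $D$, so by essential uniqueness for the $\lambda$-presentable object $A$ they agree at some later stage $D_l$. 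This gives a factorisation of $(u,v)$ through $g_l$. Essential uniqueness of this factorisation follows componentwise from essential uniqueness for $A$ and $B$, again using directedness to find a common index. This direction does not use amalgamation.

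\emph{($\Rightarrow$)} Assume $f$ is $\lambda$-presentable in $\K^2$. To show $B$ is $\lambda$-presentable, take a $\lambda$-directed colimit $D = \colim D_i$ in $\K$. Consider the diagram of identities $(1_{D_i})$ in $\K^2$, whose colimit is $1_D$. A morphism $B \to D$ corresponds to a square $f \to 1_D$ given by $(vf, v)$, and squares into $1_{D_i}$ correspond exactly to morphisms $B \to D_i$. Hence $\Hom_{\K^2}(f, 1_{(-)}) \cong \Hom_\K(B, -)$ on this diagram, and preservation of the colimit gives $\lambda$-presentability of $B$.

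For $A$, the natural move is to use the diagram of morphisms $D_i \to \ast$ into a terminal object, but no terminal object is assumed. This is where amalgamation enters, and I expect it to be the main obstacle. Given $u: A \to D$ with $D = \colim D_i$, I would build a $\lambda$-directed diagram $g_i: D_i \to E_i$ in $\K^2$ whose colimit $g: D \to E$ admits a $v$ with $vf = gu$. One option is to use amalgamation to pushout-like complete $B \leftarrow A \to D$, but this does not obviously organise into a directed system. The option I would try first is to take $E_i := E$, fixed, where $E$ together with $g: D \to E$ and $v: B \to E$ is an amalgam of the span $B \xleftarrow{f} A \xrightarrow{u} D$, and set $g_i: D_i \to D \to E$. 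The diagram $(g_i)$ has colimit $g$, because colimits are componentwise and the colimit of a constant $\lambda$-directed diagram on $E$ is $E$. The square $(u,v): f \to g$ then factors through some $g_i$, which yields a factorisation of $u$ through $D_i$.

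For essential uniqueness, suppose two factorisations $u', u'': A \to D_i$ of $u$ are given. Pair each with $v$ into $E$ to obtain two squares $f \to g_i$ that become equal in $g$. Essential uniqueness in $\K^2$ then makes them equal at some later stage $g_j$, so $u'$ and $u''$ agree in $D_j$. Hence $A$ is $\lambda$-presentable.
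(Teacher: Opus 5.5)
Your proposal is correct and is essentially the paper's proof. For $B$ you use the diagram of identities $1_{D_i}$ with colimit $1_D$, and for $A$ you amalgamate the span $B \xleftarrow{f} A \xrightarrow{u} D$ to obtain $E$, then use the $\lambda$-directed diagram $D_i \to D \to E$ (constant codomain $E$) in $\K^2$, whose colimit is $D \to E$, exactly as the paper does.
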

\begin{proof}
The right to left direction is well known and straightforward, and does not use the amalgamation property. We prove the other direction.

To show that $B$ is $\lambda$-presentable we actually do not need the amalgamation property. Let $(d_i: D_i \to D)_{i \in I}$ be a $\lambda$-directed colimit in $\K$, and let $g: B \to D$ be some morphism. Then $((d_i, d_i): id_{D_i} \to id_D)_{i \in I}$ is a $\lambda$-directed colimit in $\K^2$. We also have a morphism $(gf, g): f \to id_D$ as pictured in the diagram below.
\[\begin{tikzcd}[sep=small]
	D && D & \\
	& {D_i} && {D_i} \\
	A && B
	\arrow[equals, from=1-1, to=1-3]
	\arrow["{d_i}"', from=2-2, to=1-1]
	\arrow[equals, from=2-2, to=2-4]
	\arrow["{d_i}"', from=2-4, to=1-3]
	\arrow["gf", from=3-1, to=1-1]
	\arrow[dashed, from=3-1, to=2-2]
	\arrow["f"', from=3-1, to=3-3]
	\arrow["g"{pos=0.2}, from=3-3, to=1-3]
	\arrow[dashed, from=3-3, to=2-4]
\end{tikzcd}\]
Since $f$ was assumed to be $\lambda$-presentable, $(gf, g)$ factors essentially uniquely through some $(d_i, d_i)$, as depicted by the dashed arrows in the diagram above. So $g$ factors essentially uniquely through some $d_i$.

To show that $A$ is $\lambda$-presentable we will use the amalgamation property. Let again $(d_i: D_i \to D)_{i \in I}$ be a $\lambda$-directed colimit in $\K$, and this time we consider an arbitrary morphism $g: A \to D$. Using the amalgamation property we can complete the span $(f, g)$ to a commuting diagram as shown below.
\[\begin{tikzcd}[sep=small]
	D && E & \\
	& {D_i} && E \\
	A && B
	\arrow["e", from=1-1, to=1-3]
	\arrow["{d_i}"', from=2-2, to=1-1]
	\arrow["{e d_i}"{pos=0.2}, from=2-2, to=2-4]
	\arrow[equals, from=2-4, to=1-3]
	\arrow["g", from=3-1, to=1-1]
	\arrow[dashed, from=3-1, to=2-2]
	\arrow["f"', from=3-1, to=3-3]
	\arrow["{g'}"{pos=0.2}, from=3-3, to=1-3]
	\arrow[dashed, from=3-3, to=2-4]
\end{tikzcd}\]
Then $e: D \to E$ is the $\lambda$-directed colimit of $(e d_i: D_i \to E)_{i \in I}$ in $\K^2$. So since $f$ is $\lambda$-presentable we have that $(g, g')$ factors essentially uniquely through some $(d_i, id_E)$, as depicted by the dashed arrows in the diagram above. So $g$ factors essentially uniquely through $d_i$.
\end{proof}
\begin{definition}
\thlabel{cellular-cofibrant-closure}
Let $\M$ be a class of morphisms in some cocomplete category $\K$.
\begin{itemize}
\item We write $\Po(\M)$ for the closure of $\M$ under pushouts (in $\K$). That is, $f \in \Po(\M)$ iff $f$ is an isomorphism or if there is a pushout diagram in $\K$ like below with $g \in \M$.
\[\begin{tikzcd}[sep=small]
	A & B \\
	C & D
	\arrow["f", from=1-1, to=1-2]
	\arrow["\lrcorner"{anchor=center, pos=0.125, rotate=-90}, draw=none, from=1-2, to=2-1]
	\arrow[from=2-1, to=1-1]
	\arrow["g"', from=2-1, to=2-2]
	\arrow[from=2-2, to=1-2]
\end{tikzcd}\]
\item We write $\Tc(\M)$ for the closure of $\M$ under transfinite compositions (in $\K$). That is, $f \in \Tc(\M)$ iff there is a smooth chain $(A_i)_{i \leq \delta}$ in $\K$ such that for all $i < \delta$ the link $A_i \to A_{i+1}$ is in $\M$ and $f$ is $A_0 \to A_\delta$.
\item We write $\Rt(\M)$ for the closure of $\M$ under retracts (in $\K^2$). That is, $f \in \Rt(\M)$ iff $f$ is a retract in $\K^2$ of some $g \in \M$.
\item We write $\cell(\M) := \Tc(\Po(\M))$ and $\cof(\M) := \Rt(\cell(\M))$.
\end{itemize}
The class $\M$ is called \emph{cellularly closed} if $\M = \cell(\M)$. It is called \emph{cofibrantly closed} if $\M = \cof(\M)$.

If there is a set $\M_0$ of morphisms such that $\M = \cell(\M_0)$ then $\M$ is called \emph{cellularly generated}. Similarly, if there is a set of morphisms $\M_0$ such that $\M = \cof(\M_0)$ then $\M$ is called \emph{cofibrantly generated}.
\end{definition}
All the operations in \thref{cellular-cofibrant-closure} are idempotent. Furthermore, any cofibrantly closed class is cellularly closed \cite[Lemma 2.2]{makkaiCellularCategories2014}.

Not every cellularly generated class of morphisms is cofibrantly generated, as it may not be cofibrantly closed. The following fact shows that the converse does hold: cofibrantly generated classes of morphisms are cellularly generated.
\begin{fact}[{\cite[Theorem B.1]{makkaiFatSmallObject2014}}]
\thlabel{elimination-of-retracts}
Let $\lambda$ be an uncountable regular cardinal, $\K$ a locally $\lambda$-presentable category, and $\M$ a cofibrantly generated class of morphisms in $\K$ with $\M = \cof(\M_\lambda)$. Then $\M = \cell(\M_\lambda)$.
\end{fact}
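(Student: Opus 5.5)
The inclusion $\cell(\M_\lambda) \subseteq \cof(\M_\lambda) = \M$ is immediate, so the work is the converse: every $f \in \M$ must be shown to lie in $\cell(\M_\lambda)$. I would start with two observations about $\M_\lambda$. First, it is closed under retracts in $\K^2$: a retract of a $\lambda$-presentable object of $\K^2$ is $\lambda$-presentable, and a retract of a morphism in $\M$ lies in $\M$ because $\M$ is cofibrantly closed. Second, since $\K$ is locally $\lambda$-presentable and $\M_\lambda$ is essentially a set, the small object argument factors any $f: A \to B$ in $\M$ as $f = pg$. Here $g: A \to C$ is in $\cell(\M_\lambda)$, and $p$ has the right lifting property with respect to $\M_\lambda$, hence with respect to $\cof(\M_\lambda) = \M$. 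The retract argument then gives $s: B \to C$ with $sf = g$ and $ps = \mathrm{id}_B$. So $f$ is a retract of $g$ relative to $A$ (identity on the domain), and $e := sp$ is an idempotent on $C$ under $A$.

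The main step replaces this single retraction by a smooth chain of small retractions. Write $g$ as a cell complex $A = C_0 \to C_1 \to \dots \to C_\delta = C$, each link a pushout of some $h_\xi \in \M_\lambda$. Following the fat small object argument idea, I would reorganise this into a presentation in which $C$ is the union of subcomplexes $A \to C_S$ indexed by sets $S$ of cells closed under predecessors. Then I would build a smooth chain $(S_\alpha)$ of such sets such that each $C_{S_\alpha}$ is closed under $e$, that is, $e$ restricts to an idempotent on $C_{S_\alpha}$ under $A$. Each $S_{\alpha+1} \setminus S_\alpha$ should have size $<\lambda$. To get $S_{\alpha+1}$, I would close $S_\alpha$ plus one new cell under the operation ``add the $<\lambda$ many cells needed to witness where $e$ sends the new cells''. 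This uses $\lambda$-presentability of the domains and codomains of the $h_\xi$. The closure is an $\omega$-length iteration of steps each adding $<\lambda$ cells, so it stays $<\lambda$ precisely because $\lambda$ is uncountable and regular. This is where that hypothesis enters.

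Setting $B_\alpha$ to be the image (splitting) of $e$ on $C_{S_\alpha}$, we obtain a smooth chain $A = B_0 \to \dots$ with colimit $B$, since splittings of idempotents commute with colimits. Each link $B_\alpha \to B_{\alpha+1}$ is a retract, relative to $B_\alpha$, of the link $C_{S_\alpha} \to C_{S_{\alpha+1}}$, and that link is a pushout of a $\lambda$-small cell complex, hence of a morphism in $\M_\lambda$ (using \thref{presentability-facts}(i)). The main obstacle is to conclude that the retract link $B_\alpha \to B_{\alpha+1}$ is itself a pushout of a morphism in $\M_\lambda$, which is what places $f$ in $\Tc(\Po(\M_\lambda))$. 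I would try to do this by pulling the retraction back to the small piece. The $<\lambda$ new cells and their attaching data factor through a $\lambda$-presentable sub-object of $B_\alpha$. Enlarging that sub-object so that it is $e$-closed, by the same $\omega$-closure trick, gives a $\lambda$-presentable morphism that is a retract of a morphism in $\M_\lambda$, hence in $\M_\lambda$ by the first observation, and whose pushout along the inclusion into $B_\alpha$ is $B_\alpha \to B_{\alpha+1}$.
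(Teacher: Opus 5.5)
The paper gives no proof of this statement; it is imported from \cite[Theorem B.1]{makkaiFatSmallObject2014}, so there is no in-paper argument to compare with. Your outline is a sound proof. The retract argument, the fat small object argument (so that subcomplexes $C_S$ make sense), a smooth chain of subcomplexes closed under $e=sp$, and the use of uncountable $\lambda$ exactly for the countable closure all work. Two steps are stated too loosely, however, and they are where the real work lies in a general locally $\lambda$-presentable category.

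\emph{Restricting $e$.} Since $C_S\to C$ need not be monic, ``$e$ restricts to an idempotent on $C_{S_\alpha}$'' is data you must choose, not a property. You need $e_S\colon C_S\to C_S$ with the following properties:
\begin{itemize}
\item it lifts $e$ and fixes $A$;
\item it is idempotent;
\item it is compatible along the chain, i.e.\ $e_{S_\beta}\circ(C_{S_\alpha}\to C_{S_\beta})=(C_{S_\alpha}\to C_{S_\beta})\circ e_{S_\alpha}$.
\end{itemize}
Each of these equations holds only after composing into $C$, hence only after an enlargement by essential uniqueness. This is harmless: for $|N|<\lambda$, the map $C_{S_\alpha}\to C_{S_\alpha\cup N}$ is a pushout of a single $\lambda$-presentable $k\colon K\to L$ in $\M$ (by \cite[Lemma 4.20]{makkaiFatSmallObject2014}). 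So maps out of it are controlled by their restrictions to $C_{S_\alpha}$ and to $L$, and the extra enlargements fit into your $\omega$-closure.

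\emph{The links.} The map $b\colon B_\alpha\to B_{\alpha+1}$ is not a retract under $B_\alpha$ of $c\colon C_{S_\alpha}\to C_{S_{\alpha+1}}$, since the domains differ. What is true, using $e_{S_{\alpha+1}}c=c\,e_{S_\alpha}$, is the following. Push $c$ out along the retraction $r_\alpha\colon C_{S_\alpha}\to B_\alpha$ to get $c'\colon B_\alpha\to E=B_\alpha\sqcup_K L$. Then $b$ is a retract of $c'$ under $B_\alpha$, so $B_{\alpha+1}$ splits an idempotent $d$ on $E$ under $B_\alpha$.

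Now write $B_\alpha$ as a $\lambda$-directed colimit of $\lambda$-presentable $K_j$; these are not subobjects in general. For large $j$, $d$ descends to an idempotent $d_j$ on $E_j=K_j\sqcup_K L$ under $K_j$. This needs only finitely many essential-uniqueness enlargements, with no $\omega$-iteration, because $d$ fixes $B_\alpha$. Splitting $d_j$ gives $K_j\to F_j$, which lies in $\M_\lambda$ by your first observation. Its pushout along $K_j\to B_\alpha$ is $b$, because pushout along a fixed map preserves split idempotents.

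With these two points made precise, your argument proves the statement.
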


\subsection{Set-theoretical preliminaries}
\label{sec_SetTheorySetup}

Because we will exclusively work with concrete presentations of locally presentable categories given by \cite{adamekLocallyPresentableAccessible1994}, it will suffice to take as our background theory the Zermelo–Fraenkel axioms of set theory with choice (ZFC).

The L\'evy hierarchy provides an important stratification of formulas in the language of set theory (which has the single binary relation symbol $\in$).  A bounded quantifier is one of the form $\forall x \in y$ or $\exists x \in y$, and a formula is in the class $\Delta_0$ if it can be written as a block of bounded quantifiers followed by a quantifier-free formula.  The key feature of $\Delta_0$-formulas is that they are absolute between \emph{transitive} models of set theory \cite[Lemma 12.9]{MR1940513}.  In particular, if $H$ is a transitive set (i.e., $a \in b \in H \ \implies a \in H$) and $\varphi$ is a $\Delta_0$-formula, then for all $a_1,\dots,a_k \in H$ we have $(H,\in) \models \varphi(a_1,\dots,a_k)$ if and only if $(V,\in) \models \varphi(a_1,\dots,a_k)$.  So $(H,\in)$ computes the truth value of $\Delta_0$-statements correctly.  The L\'evy classes $\Sigma_n$ and $\Pi_n$ of formulas are defined recursively:  $\Sigma_0 = \Pi_0 = \Delta_0$; a $\Sigma_{n+1}$-formula is one of the form 
\[
\exists x_1 \dots \exists x_\ell \ \varphi(x_1,\dots,x_\ell,v_1,\dots,v_k)
\]
where $\varphi$ is a $\Pi_n$-formula, and a $\Pi_{n+1}$-formula is one of the form
\[
\forall x_1 \dots \forall x_\ell \ \varphi(x_1,\dots,x_\ell,v_1,\dots,v_k)
\]
where $\varphi$ is $\Sigma_n$. Every first-order formula lies in this hierarchy.

The absoluteness of $\Delta_0$-formulas between transitive sets implies the following fact.
\begin{fact}
\thlabel{UpwardAbsolutenessSigma_1}
The $\Sigma_1$-formulas are upward absolute, and the $\Pi_1$-formulas are downward absolute, between transitive models.    
\end{fact}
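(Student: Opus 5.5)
The plan is to prove both halves of \thref{UpwardAbsolutenessSigma_1} directly from the absoluteness of $\Delta_0$-formulas between transitive sets, as recalled just before the statement. Let $H \subseteq H'$ be transitive models; the case of interest is $H' = V$, but the argument is identical for any pair. Upward absoluteness for a formula $\varphi$ means: whenever $a_1, \dots, a_k \in H$ and $(H, \in) \models \varphi(\bar a)$, then $(H', \in) \models \varphi(\bar a)$. Downward absoluteness is the reverse implication.

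For a $\Sigma_1$-formula, write $\varphi(\bar v) \equiv \exists x_1 \dots \exists x_\ell\, \psi(\bar x, \bar v)$ with $\psi \in \Delta_0$. Suppose $(H, \in) \models \varphi(\bar a)$. Then there are witnesses $b_1, \dots, b_\ell \in H$ with $(H, \in) \models \psi(\bar b, \bar a)$. All parameters $\bar b, \bar a$ lie in $H$, and $\psi$ is $\Delta_0$. Absoluteness between $H$ and $V$ gives $(V,\in) \models \psi(\bar b, \bar a)$. Since $\bar b, \bar a \in H \subseteq H'$, absoluteness between $H'$ and $V$ in turn gives $(H', \in) \models \psi(\bar b, \bar a)$. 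The same $\bar b$, which lie in $H'$, then witness $(H', \in) \models \varphi(\bar a)$.

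For a $\Pi_1$-formula $\varphi \equiv \forall \bar x\, \psi$ with $\psi \in \Delta_0$, the point is that $\neg\varphi$ is equivalent to $\exists \bar x\, \neg\psi$, and $\neg\psi$ is still $\Delta_0$. So $\neg\varphi$ is (equivalent to) a $\Sigma_1$-formula, and this equivalence holds inside any model. If $(H', \in) \models \varphi(\bar a)$ with $\bar a \in H$ but $(H, \in) \not\models \varphi(\bar a)$, then $(H, \in) \models \neg\varphi(\bar a)$. The $\Sigma_1$ case transports this up to $H'$, which is a contradiction. Alternatively one can argue directly: every $\bar b \in H$ also lies in $H'$, so $\psi(\bar b, \bar a)$ holds in $H'$, hence in $V$, hence in $H$.

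There is no real obstacle here. The only point needing care is that the witnesses found in the smaller model are genuinely elements of the larger one, which holds because $H \subseteq H'$. We also need that $\Delta_0$ is closed under negation, so that the $\Pi_1$ case reduces to the $\Sigma_1$ case.
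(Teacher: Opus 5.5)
Your proof is correct and is the argument the paper intends. The paper gives no separate proof; it only remarks that the fact follows from the absoluteness of $\Delta_0$-formulas between transitive sets. You spell that out: witnesses found in the smaller model plus $\Delta_0$ absoluteness give the $\Sigma_1$ case, and closure of $\Delta_0$ under negation reduces the $\Pi_1$ case to it.
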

A (possibly proper) class $C$ is $\Sigma_n$-definable in $(V,\in)$ from parameters $p_1,\dots,p_k$ if there is a $\Sigma_n$-formula $\varphi$ such that $C = \{ x \ : \ (V,\in) \models \varphi(x,p_1,\dots,p_k)   \}$, and similarly for $\Pi_n$. 

For a set $N$, we write $\NN$ for the structure $(N,\in \cap  (N \times N))$, and we will often confuse $N$ and $\NN$. The next fact is a basic application of the L\'evy-Montague Reflection Principle \cite[Theorem 12.14]{MR1940513}, the first-order expressibility (for \emph{fixed} $n$) of $\Sigma_n$ truth in $(V,\in)$ \cite[Chapter 0]{MR2731169}, and the downward L\"owenheim-Skolem Theorem from first order logic.
\begin{fact}{\cite[Fact 3.5]{coxFlatCoverConjecture2025}}
\thlabel{Class_LS_Theorem}
For any fixed (metamathematical) natural number $n$ and any regular uncountable cardinal $\kappa$:  for every set $X$ of size $<\kappa$,  there is a set $N$ of size $<\kappa$ such that $X \subset N$, $N \cap \kappa$ is transitive,\footnote{This means $N \cap \kappa$ is downward closed as a set of ordinals.} and $\NN \prec_n (V,\in)$. The latter means that for every $\Sigma_n$-formula $\varphi$ and all $a_1,\dots,a_k \in N$ we have $\NN \models \varphi(a_1,\dots,a_k)$ if and only if $(V,\in) \models \varphi(a_1,\dots,a_k)$.
\end{fact}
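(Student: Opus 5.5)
We combine the three ingredients named just before the statement: the Lévy--Montague Reflection Principle, the first-order expressibility of $\Sigma_n$-truth for fixed $n$, and the downward Löwenheim--Skolem theorem. Fix $n$ and the regular uncountable $\kappa$, and let $X$ be a set with $|X| < \kappa$.

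\textbf{Step 1: reflect to a set-sized model.} Let $\mathrm{Sat}_{\Sigma_n}(\ulcorner\varphi\urcorner, a)$ be the $\Sigma_n$-satisfaction formula, which uniformly expresses $\Sigma_n$-truth in $(V,\in)$. Apply the Reflection Principle to this single formula, together with finitely many auxiliary formulas defining ordinals and membership. This gives a limit ordinal $\theta$ such that $X \cup \{\kappa\} \subseteq V_\theta$ and $\mathrm{Sat}_{\Sigma_n}$ is absolute between $V_\theta$ and $V$. Consequently, for every $\Sigma_n$-formula $\varphi$ and parameters $a_1,\dots,a_k \in V_\theta$, we have $V_\theta \models \varphi(\bar a)$ iff $V \models \varphi(\bar a)$; that is, $(V_\theta,\in) \prec_n (V,\in)$.

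\textbf{Step 2: shrink while keeping $N\cap\kappa$ an initial segment.} Build a countable chain $N_0 \subseteq N_1 \subseteq \cdots$ of elementary substructures of $(V_\theta,\in)$, each of size $<\kappa$. Take $N_0$ to be a Löwenheim--Skolem hull of $X \cup \{\kappa\}$; it has size $\le |X|+\aleph_0 < \kappa$, using that $\kappa$ is uncountable. Given $N_m$, the set $N_m \cap \kappa$ has size $<\kappa$, so by regularity of $\kappa$ it is bounded by some $\gamma_m < \kappa$. Let $N_{m+1}$ be the elementary hull of $N_m \cup \gamma_m$, which has size $<\kappa$. Then $N := \bigcup_m N_m$ is an elementary substructure of $V_\theta$ by Tarski--Vaught for unions of chains. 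It has size $<\kappa$, again by regularity and uncountability of $\kappa$ (cofinality $>\omega$). Finally $N \cap \kappa = \sup_m \gamma_m$ is transitive, since each $\gamma_m \subseteq N_{m+1}$ and every element of $N\cap\kappa$ lies below some $\gamma_m$.

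\textbf{Step 3: compose the elementarity.} From $\NN \prec V_\theta$ and $V_\theta \prec_n V$ we get $\NN \prec_n V$. The main care point is Step 1: reflection is a theorem scheme, so $n$ must be fixed metamathematically. This is why we reflect the single formula $\mathrm{Sat}_{\Sigma_n}$ rather than all formulas at once.
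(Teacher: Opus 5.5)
Your proposal is correct and takes essentially the paper's approach. The paper gives no proof beyond citing the L\'evy--Montague reflection principle, first-order expressibility of $\Sigma_n$-truth, and downward L\"owenheim--Skolem, and your three steps combine exactly these: reflect $\mathrm{Sat}_{\Sigma_n}$ to get $V_\theta \prec_n V$, build an $\omega$-chain of hulls of size $<\kappa$ so that $N\cap\kappa=\sup_m\gamma_m$, then compose the two elementarity relations. The only point you leave implicit is that $V_\theta$ must also evaluate $\mathrm{Sat}_{\Sigma_n}$ correctly against its own relativised formulas, i.e.\ $(\mathrm{Sat}_{\Sigma_n}(\ulcorner\varphi\urcorner,\bar a))^{V_\theta}\leftrightarrow\varphi^{V_\theta}(\bar a)$; this is routine for limit $\theta>\omega$, or can be secured by also reflecting the finitely many recursive clauses defining $\mathrm{Sat}_{\Sigma_n}$.
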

\begin{fact}
\thlabel{kappa-sized-is-subset-of-subuniverse}
Let $\kappa$ be an infinite cardinal and suppose that $\NN \prec_1 (V, \in)$ and $\NN \cap \kappa$ is transitive. Then for any $A \in \NN$ we have that if $|A| < \kappa$ then $A \subseteq \NN$.
\end{fact}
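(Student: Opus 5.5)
The plan is the standard argument: we use elementarity to produce a surjection onto $A$ that lies in $\NN$, then use the transitivity of $\NN \cap \kappa$ to see that every element of its domain is in $\NN$.

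Let $A \in \NN$ with $|A| < \kappa$, and write $\mu := |A|$.

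\emph{Step 1: $\mu \in \NN$.} The statement ``there exists an ordinal $\alpha$ and a bijection $f \colon \alpha \to A$'' is $\Sigma_1$ in the parameter $A$: the clauses ``$\alpha$ is an ordinal'' and ``$f$ is a bijection from $\alpha$ onto $A$'' are $\Delta_0$. It holds in $V$ by the axiom of choice, with $\alpha = \mu$. Since $\NN \prec_1 (V,\in)$, there are $\alpha, f \in \NN$ such that $\NN$ satisfies these $\Delta_0$ clauses.

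\emph{Step 2: the witnesses are genuine.} $\NN$ need not be transitive, so $\Delta_0$ truth in $\NN$ does not automatically transfer to $V$. It does transfer here, because $\NN \prec_1 (V,\in)$ gives agreement on all $\Sigma_1$ formulas, and in particular on all $\Delta_0$ formulas, for parameters in $\NN$. Hence in $V$, $\alpha$ is an ordinal and $f \colon \alpha \to A$ is a bijection. It follows that $|\alpha| = \mu < \kappa$.

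\emph{Step 3: bounding $\alpha$ inside $\kappa$.} We want $\alpha \in \NN \cap \kappa$. This needs care, since $\alpha$ is only an ordinal of cardinality $<\kappa$, not necessarily $<\kappa$ itself. There are two ways to handle this.
\begin{itemize}
\item Ask for a bijection from a cardinal instead. The statement ``there is a cardinal $\mu$ and a bijection $\mu \to A$'' is not obviously $\Sigma_1$, so rather than use it directly we compute $|A|$ inside $\NN$ by the second route.
\item Use $\kappa \le \alpha$ only to derive a contradiction. We only need some ordinal $\beta < \kappa$ in $\NN$ with a bijection onto $A$ in $\NN$. Suppose $\kappa \le \alpha$. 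Then $\kappa \subseteq \alpha$ and $|\alpha| < \kappa$, a contradiction. So $\alpha < \kappa$ automatically.
\end{itemize}
Thus $\alpha \in \NN \cap \kappa$.

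\emph{Step 4: every element of $A$ lies in $\NN$.} Since $\NN \cap \kappa$ is transitive and $\alpha \in \NN \cap \kappa$, every $\xi < \alpha$ lies in $\NN$. For each such $\xi$, the statement ``there is $y$ with $(\xi, y) \in f$'' is $\Sigma_1$ with parameters $\xi, f \in \NN$. It is true in $V$, so there is $y \in \NN$ with $(\xi, y) \in f$ holding in $\NN$. Since $\NN \prec_1 (V,\in)$, this holds in $V$ as well, so $y = f(\xi)$ because $f$ is a function. Hence $f(\xi) \in \NN$ for every $\xi < \alpha$.

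Finally, $f$ is onto $A$ in $V$, so every $a \in A$ is of the form $f(\xi)$ with $\xi < \alpha$. Therefore $A \subseteq \NN$. Because $\NN$ is not assumed transitive, the main thing to watch is that each statement transferred between $\NN$ and $V$ is $\Sigma_1$ with parameters in $\NN$, and that its witnesses are then correctly interpreted in $V$.
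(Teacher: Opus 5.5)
Your proof is correct and follows the paper's argument: use $\Sigma_1$-elementarity to get an ordinal $\alpha$ and a map $f$ from $\alpha$ onto $A$, both in $\NN$; use transitivity of $\NN \cap \kappa$ to get $\alpha \subseteq \NN$; then pull each $f(\xi)$ into $\NN$. Your use of a bijection rather than the paper's surjection is what justifies $\alpha < \kappa$, since a surjection onto $A$ could come from an arbitrarily large ordinal, so the second bullet of your Step~3 is the right argument. The first bullet of Step~3 and the heading of Step~1 can be dropped, since you never prove or use $\mu \in \NN$.
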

\begin{proof}
This fact is well known, but it admits a short proof, so we provide it here. Suppose $A \in \NN$ and $|A|<\kappa$; then $(V,\in)$ satisfies the $\Sigma_1$-statement ``$\exists f$ $\exists \alpha$ $\alpha$ is an ordinal and $f$ is a surjection from $\alpha$ to $A$" (the part after the quantifiers is $\Delta_0$-expressible, see \cite[Chapter 12]{MR1940513}).  Since $A \in \NN \prec_1 (V,\in)$, there exists such a pair $\alpha,f$ that are both elements of $\NN$.  Since $|A|<\kappa$, it follows that $\alpha < \kappa$, hence $\alpha \in \NN \cap \kappa$.  Since $\NN \cap \kappa$ is transitive, $\alpha = \text{dom}(f) \subset \NN$.  It follows that $A=\text{im}(f)$ is also contained in $\NN$, since for any $\xi \in \text{dom}(f)$ the assertion $\exists a \in A \ \langle \xi,a \rangle \in f$ is a $\Delta_0$-statement in parameters $\xi$, $f$, and $A$, all of which are elements of $\NN$.
\end{proof}
If $\NN \prec_0 (V,\in)$ and $\L$ is a finitary sorted language with set $S$ of sorts, we will say that $\L$ ``is both an element and subset of $\NN$", or write $\mathcal{L} \cup \{ \L \} \subset \NN$, to mean that both $S$ and $\mathcal{L}$ are elements of $\NN$, $S$ is a subset of $\NN$, and each $\mathcal{L}$-symbol (with its associated arity) is an element of $\NN$.  We view an $\mathcal{L}$-structure as an $S$-partitioned set $A = \bigsqcup_{s \in S} A_s$ together with a function that, for each symbol in $\L$, assigns its interpretation.  We often confuse the $\mathcal{L}$-structure with its underlying set.  Suppose $A=\big( \bigsqcup_{s \in S} A_s, c^{A}_i, F^{A}_j, R^{A}_k \big)_{i \in I, j \in J, k \in K}$ is an $\mathcal{L}$-structure that is an \emph{element of} $\NN$. Since every symbol in the language is an element of $\NN$, it follows by $\Sigma_0$-elementarity of $\NN$ in $(V,\in)$ that $\NN \cap A$ contains all the $c^{A}_i$'s and is closed under each $F^{A}_j$ (the fact that the arity of the function symbols is finite is crucial here).  So it makes sense to form the restriction $A \restriction \NN$ on the universe $\NN \cap A$.  In fact this is a fully elementary substructure of $A$ in the language $\L$, since for any $\L$-formula $\varphi$ and any list $a_1,\dots,a_k$ of parameters from $A$, the relation
\[
A \models \varphi(a_1,\dots,a_k)
\]
can be expressed using only bounded quantifiers (namely, bounded quantifiers of the form $\exists z \in A$ and $\forall z \in A$). In particular, for any $\L$-theory $T$, $A \models T$ if and only if $A \restriction \NN \models T$.  Furthermore, if $\pi: A \to B$ is a homomorphism of $\L$-structures with $\pi \in \NN$, then since $\pi \in \NN \prec_0 (V,\in)$, it follows that $\pi(a) \in \NN$ for every $a \in \NN \cap A$.  So $\pi \restriction \NN: A \restriction \NN \to B \restriction \NN$ is an $\L$-homomorphism. To summarise, if $\L$ is finitary and $\L \cup \{ \L \} \subset \NN \prec_0 (V,\in)$, then:
\begin{equation}
\text{for every } \L \text{-theory } T, \ (-) \restriction \NN: \NN \cap \Mod(T) \to \Mod(T) \text{ is a functor.}
\end{equation}

For a small category $\C$ and an $\NN \prec_1 (V,\in)$, by ``$\C \in \NN$" we mean that the ordered tuple
\[
\left( \Obj(\C), \Mor(\C), \circ_\C, \dom_\C, \cod_\C, (id_c)_{c \in \Obj(\C)} \right)
\]
is an element of $\NN$, and it follows by elementarity that 
\[
\C \restriction \NN:= \left(\NN \cap \Obj(\C), \NN \cap \Mor(\C) \right)
\]
is a (not necessarily full) subcategory of $\mathcal{C}$.  
Let $D: \C \to \Mod(T)$ be a functor whose domain is a small category, by ``$D \in \NN$" we will mean that $\C \in \NN$ (in the above sense) and that the sets $\{ (c, D(c)) : c \in \text{Obj} (\C) \}$ and $\{ (f,D(f)) : f \in \text{Mor}(\C)  \}$ are both elements of $\NN$. If that holds, we define $D \restriction \NN$ to be the restriction of $D$ to the category $\C \restriction \NN$, which is a functor from $\C \restriction \NN$ into $\NN \cap \Mod(T)$.

\begin{proposition}[Restriction to subuniverses commutes with colimits]
\thlabel{compute-colimits-correctly}
Let $T$ be a limit theory in some finitary language $\L$ and let $\NN \prec_1 (V, \in)$  with $\L$ and $T$ both elements and subsets of $\NN$. Suppose that $D: \C \to \Mod(T)$ is a small diagram and $D \in \NN$.  Then
\[
(\colim D) \restriction \NN = \colim (D \restriction \NN).
\]
\end{proposition}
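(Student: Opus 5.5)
The plan is to show that, for a colimit cocone of $D$ chosen inside $\NN$, its restriction to $\NN$ is a colimit cocone of $D \restriction \NN$ in $\Mod(T)$. The argument has three steps: get the cocone into $\NN$, show that $L \restriction \NN$ is generated by the images of the restricted cocone, and construct the factorisation through $L \restriction \NN$. The last step uses a syntactic (derivation-based) description of colimits in $\Mod(T)$.

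\emph{Step 1: put the colimit in $\NN$.} Colimits in $\Mod(T)$ admit an explicit set-theoretic construction from $D$, $T$ and $\L$, namely as the model presented by generators and relations. The generators are $\bigsqcup_{c} D(c)$, and the relations are the positive diagram of each $D(c)$ together with the identifications $x = D(f)(x)$. We take the initial model of $T$ satisfying this presentation. Concretely, this is the quotient of the term structure by the congruence generated by finite derivations using the atomic diagram and the limit sentences of $T$. New elements have to be added as Skolem-like witnesses for the $\exists!$ parts of $T$, iterated $\omega$ times. The statement ``$L$, with cocone $(\lambda_c)_c$, is the result of this construction'' is $\Sigma_1$ in the parameters $D, T, \L$: it says there exists a witnessing construction sequence together with the derivations, and everything after that quantifier is $\Delta_0$. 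Since the right-hand side of the statement of \thref{compute-colimits-correctly} is only defined up to isomorphism, we may take $\colim D$ to be this $L$. Elementarity $\NN \prec_1 (V,\in)$ then gives that $L$ and the map $c \mapsto \lambda_c$ lie in $\NN$. Hence $\lambda_c \in \NN$ for every $c \in \Obj(\C) \cap \NN$, so $\lambda_c \restriction \NN : D(c)\restriction \NN \to L \restriction \NN$ is defined. Restricting to objects of $\C \restriction \NN$ gives a cocone on $D \restriction \NN$ by functoriality of $(-)\restriction\NN$.

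\emph{Step 2: generation.} By construction, every $\ell \in L$ satisfies the following $\Sigma_1$ statement with parameters in $\NN$: there exist finitely many $c_i$, elements $x_i \in D(c_i)$, and a term $t$ (more precisely, a term built using the unique-witness functions from the construction) such that $\ell$ is the value of $t$ on the $\lambda_{c_i}(x_i)$. By elementarity, such witnesses exist in $\NN$ whenever $\ell \in \NN$. Since the $x_i$ are finite in number and lie in $\NN$, every element of $L\restriction\NN$ is generated, using $\L$-operations and unique-existence witnesses, from images of $D \restriction \NN$. This gives uniqueness of any factorisation: a model homomorphism is determined on such generated elements, because it preserves the $\exists!$ witnesses of limit sentences.

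\emph{Step 3: existence of the factorisation.} Let $(\mu_c : D(c)\restriction\NN \to M)$ be a cocone over $D\restriction\NN$ with $M \models T$. Define $h : L\restriction\NN \to M$ by sending the value of $t(\lambda_{c_i}(x_i))$ to the corresponding interpretation $t^M(\mu_{c_i}(x_i))$ in $M$. We must check this is well defined and preserves relations. Suppose two such representations of the same element, or an atomic relation, hold in $L$, with all parameters in $\NN$. Then there is a finite derivation of this fact from the presentation and $T$, and the existence of such a derivation is $\Sigma_1$. By elementarity a derivation exists in $\NN$; it is a finite object, so all its ingredients (elements of the $D(c)$ and morphisms $f$ of $\C$) lie in $\NN$ by \thref{kappa-sized-is-subset-of-subuniverse} or directly. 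Each step of the derivation is then valid in $M$, because $M \models T$ and $M$ satisfies the relevant part of the diagram via the $\mu_c$. So the same fact holds in $M$, and $h$ is a well-defined homomorphism with $h \circ (\lambda_c\restriction\NN) = \mu_c$.

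\emph{Main obstacle.} The hard step is Step 1 and the matching soundness claim used in Step 3. We need a concrete, $\Sigma_1$-definable construction of colimits in $\Mod(T)$ for which equalities and relations holding in $L$ are witnessed by finite derivations that are sound in every model of $T$. I would first try to obtain this by the standard free-model-on-a-presentation construction for limit theories, as in the proof of \thref{lfp-iff-models-of-limit-theory}, and verify its $\Sigma_1$-complexity along the lines of Appendix \ref{sec_LevyComplexity}.
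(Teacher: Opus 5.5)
Your argument is correct in outline, but it takes a different route from the paper.

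\textbf{The paper's route.} The paper uses only the $\Sigma_1$-expressibility of ``$C$ is (isomorphic to) the canonical colimit of $D$'' from the appendix. It reflects the existence of such a $C$ into $\NN$ and passes to the Mostowski collapse $\pi:\NN\to\overline{\NN}$, where $\Phi(\overline{C},\overline{D},T,\L)$ holds. Upward absoluteness of $\Sigma_1$ formulas from the transitive set $\overline{\NN}$ then shows that $\overline{C}$ really is a colimit of $\overline{D}$ in $V$. Since $\pi$ yields $\overline{C}\cong C\restriction\NN$ and $\overline{D}\cong D\restriction\NN$, the universal property of $C\restriction\NN$ comes for free and is never checked by hand.

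\textbf{Your route.} You stay in $V$ and verify the universal property of $L\restriction\NN$ directly. Uniqueness comes from generation by finitely many images of $D\restriction\NN$, pulled into $\NN$ by elementarity. Existence comes from pulling finite derivations into $\NN$ and interpreting them soundly in an arbitrary target $M$.

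\textbf{Comparison.} Your version is more transparent about why restriction commutes with colimits and exactly where finitarity of $\L$ enters. It also avoids the collapse, including the slightly delicate point that $\pi$ should fix $\L$ and $T$. The cost is an extra ingredient the paper does not need: a derivation calculus for the presented model with three properties.
\begin{itemize}
\item It is complete for the atomic facts holding in $L$. This includes definedness of the partial unique-witness operations, which you need so that $t^M(\mu_{c_i}(x_i))$ is defined at all.
\item It is sound in every model of $T$.
\item Derivability is $\Sigma_1$.
\end{itemize}
This is standard, for instance via the $\omega$-stage construction in the appendix, where every identification is already witnessed at a finite stage by finitely many relations. It is, however, precisely the piece your Step~3 still owes. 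In the paper's proof, the single $\Sigma_1$ description of the construction does all the work at once.
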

\begin{proof}
Let $\Phi(C,D,T,\L)$ denote the relation of $C$ being a colimit of the diagram $D$ in $\Mod(T)$. The usual translation of the relation $\Phi$ via the universal property is $\Pi_3$-expressible in $(V, \in)$. However, by \thref{levy-complexity-colimits-mod-t} we have that $\Phi(C,D,T,\L)$ is $\Sigma_1$-expressible in $(V,\in)$. The proof of \thref{levy-complexity-colimits-mod-t} in the appendix is a routine, but lengthy, analysis of the L\'evy complexity of the explicit colimit construction in $\Mod(T)$, which makes essential use of the assumption that $\L$ is a finitary language.

Modulo \thref{levy-complexity-colimits-mod-t}, the proposition is proved as follows. Let $\L$, $T$, $D$, and $\NN$ be as in the statement of the proposition.  Since $D,\L,T \in \NN \prec_1 (V,\in)$ and $\Phi$ is $\Sigma_1$-expressible, there is a $C \in \NN$ witnessing the $\Sigma_1$-statement $\exists C \ \Phi(C,D,T,\L)$.  Let $\pi: \NN \to \overline{\NN}$ denote the Mostowski collapsing isomorphism \cite[Theorem 6.15]{MR1940513} of $\NN$ with its transitive collapse $\overline{\NN}$, and for each $y \in \NN$, let $\overline{y}:= \pi(y)$.  Since $\L$ and $T$ are both elements and subsets of $\NN$, we may without loss of generality assume they are in the transitive part of $\NN$ and hence $\pi$ fixes both; i.e. $\overline{\L} = \L$ and $\overline{T} = T$.  Then by elementarity of $\pi$, $\overline{\NN}$ satisfies $\Phi\left( \overline{C},\overline{D},\overline{T}=T,\overline{\L} = \L \right)$.  Since $\Phi$ is $\Sigma_1$-expressible, it is upward absolute from the transitive set $\overline{\NN}$ to the universe $V$ by \thref{UpwardAbsolutenessSigma_1}.  So $(V,\in)$ satisfies that $\overline{C}$ is a colimit of $\overline{D}$.  And $\pi$ witnesses that $\overline{D} \simeq D \restriction \NN$ and $\overline{C} \simeq C \restriction \NN$.  So $C \restriction \NN$, i.e., $(\colim D) \restriction \NN$, is a colimit of $D \restriction \NN$.
\end{proof}

If $T$ is a limit theory in $L_{\lambda,\lambda}$ for a possibly uncountable regular $\lambda$, then a similar argument shows that colimits in $\Mod(T)$ commute with restrictions to $\NN$, \textbf{provided that $\NN$ is closed under sequences of length smaller than $\lambda$}.  For $\lambda = \aleph_0$ this requirement holds automatically for any $\NN \prec_0 (V,\in)$, i.e., any such elementary submodel is closed under finite sequences.

\section{Cellular generation in locally finitely presentable categories}
\label{sec:cellular-generation-lfp}

\begin{definition}
\thlabel{quasieffective-square}
Let $\M$ be a class of morphisms in a cocomplete category $\K$. A commutative square is called \emph{$\M$-quasieffective} if two parallel morphisms are in $\M$ and the morphism from the relevant pushout is in $\M$.
\[\begin{tikzcd}
	A && B &&&& B \\
	&&& \Longleftrightarrow & A & P \\
	C && D && C & D
	\arrow["{\in \M}"{description}, from=1-1, to=1-3]
	\arrow[curve={height=-12pt}, from=2-5, to=1-7]
	\arrow[from=2-5, to=2-6]
	\arrow["{\in \M}"{description}, dashed, from=2-6, to=1-7]
	\arrow["\lrcorner"{anchor=center, pos=0.125, rotate=-90}, draw=none, from=2-6, to=3-5]
	\arrow[from=3-1, to=1-1]
	\arrow["{\M\text{-quasieffective}}"{description}, draw=none, from=3-1, to=1-3]
	\arrow["{\in \M}"{description}, from=3-1, to=3-3]
	\arrow[from=3-3, to=1-3]
	\arrow[from=3-5, to=2-5]
	\arrow[from=3-5, to=3-6]
	\arrow[curve={height=12pt}, from=3-6, to=1-7]
	\arrow[from=3-6, to=2-6]
\end{tikzcd}\]
\end{definition}
Even though we will not need it, we note that if $\M$ is composable then the $\M$-quasieffective squares are closed under both horizontal and vertical composition. This follows from the proof of \cite[Theorem 2.7]{liebermanCellularCategoriesStable2023}.
\begin{theorem}
\thlabel{cellular-generation-lfp}
Let $\K$ be a locally finitely presentable category and let $\M$ be a cellularly closed class of morphisms in $\K$. Then the following are equivalent.
\begin{enumerate}[label=(\roman*)]
\item The class $\M$ is cellularly generated.
\item There is a cardinal $\mu$ such that every $f \in \M$, with $\rank_{\K^2}(f) > \mu$, admits a smooth filtration in $\K^2$ that consists of $\M$-quasieffective squares.
\item\label{item_ae_quasi} The class $\M$ is \emph{almost everywhere quasieffective}. That is, for any concrete presentation of $\K$ as $\Mod(T)$, where $T$ is a limit theory in some finitary language $\L$, there exists a regular uncountable $\kappa > |\L|$ and a parameter $p$ such that for every $\NN \prec_1 (V, \in)$, with $p, T, \L, \kappa \in \NN$ and $\NN \cap \kappa$ transitive, the following holds. For every $f: A \to B$ in $\NN \cap \M$ the square below is $\M$-quasieffective.
\[\begin{tikzcd}[sep=small]
	A & B \\
	{A \restriction \NN} & {B \restriction \NN}
	\arrow["f", from=1-1, to=1-2]
	\arrow[hook, from=2-1, to=1-1]
	\arrow["{f \restriction \NN}"', from=2-1, to=2-2]
	\arrow[hook, from=2-2, to=1-2]
\end{tikzcd}\]
\end{enumerate}
\end{theorem}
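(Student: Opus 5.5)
I would prove the cycle (i)$\Rightarrow$(iii)$\Rightarrow$(ii)$\Rightarrow$(i), working throughout with a fixed presentation $\K \simeq \Mod(T)$ as in \thref{lfp-iff-models-of-limit-theory}.

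\textbf{(i)$\Rightarrow$(iii).} Suppose $\M = \cell(\M_0)$ for a set $\M_0$. Take $p := \M_0$ and choose a regular uncountable $\kappa > |\L|$ such that every $g \in \M_0$ has domain and codomain of size $<\kappa$. Let $\NN \prec_1 (V,\in)$ contain $p, T, \L, \kappa$ with $\NN \cap \kappa$ transitive, and let $f: A \to B$ be in $\NN \cap \M$.

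1. The statement ``$f$ is a transfinite composition of pushouts of members of $\M_0$'' is $\Sigma_1$ in the parameters, because the witnessing cell presentation is a set and colimits are $\Sigma_1$ by \thref{levy-complexity-colimits-mod-t}. Hence there is a witness inside $\NN$: a smooth chain $(A_i)_{i \le \delta}$ together with pushouts of $g_i \in \M_0$ along $h_i$.

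2. Each $g_i$ that lies in $\NN$ has small domain and codomain, so these are subsets of $\NN$ by \thref{kappa-sized-is-subset-of-subuniverse}. Consequently $g_i \restriction \NN = g_i$.

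3. By \thref{compute-colimits-correctly}, restriction commutes with the pushouts and with the colimits at limit stages. So $A \restriction \NN \to B \restriction \NN$ is the cellular map built from the cells indexed by $\NN \cap \delta$.

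4. The map from the pushout $P = A \sqcup_{A\restriction\NN} B\restriction\NN$ to $B$ is then a transfinite composite of pushouts of the cells indexed by $\delta \setminus \NN$. The standard argument is that cells already inside $\NN$ become isomorphisms after pushing out. This shows the induced map is in $\cell(\M_0) = \M$, as required.

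Care is needed because $\NN \cap \delta$ need not be an ordinal. I would reindex along its order type and use smoothness; this bookkeeping is routine but fiddly.

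\textbf{(iii)$\Rightarrow$(ii).} Fix $\kappa, p$ and take $\mu \ge \kappa$. Let $f \in \M$ have $\rank_{\K^2}(f) = \lambda > \mu$. The rank is a successor $\theta^+$ by \thref{successor-presentability-rank}, so $|A|, |B| \le \theta$ by \thref{limit-theory-standard-facts}.

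1. Build a continuous $\in$-chain $(\NN_i)_{i<\operatorname{cf}\theta}$ of models $\NN_i \prec_1 (V,\in)$ using \thref{Class_LS_Theorem} (applied with $\theta$, or $\theta^+$, in place of $\kappa$). Each $\NN_i$ has size $<\theta$ when $\theta$ is regular, and the chain covers $A \cup B$.

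2. Each $\NN_i$ should contain $p, T, \L, \kappa, f$, and $\NN_i \cap \kappa$ should be transitive. Transitivity is preserved under unions of chains, so smoothness costs nothing.

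3. Then $f \restriction \NN_i$ is a smooth chain in $\K^2$ with colimit $f$, by \thref{limit-theory-standard-facts}(ii). Its squares into $f$, and between consecutive stages, are $\M$-quasieffective by (iii), applied in $\NN_{i+1}$ to $f \restriction \NN_i \in \NN_{i+1}$.

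4. Presentability of the stages follows from \thref{presentability-of-arrows}; $\Mod(T)$ has amalgamation via pushouts.

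The singular-$\theta$ case needs sizes $<\theta$ cofinal in $\theta$. This is where ``$(<\lambda)$-small'' rather than $\theta$-small matters, and I would handle it by taking a chain of length $\operatorname{cf}\theta$ with growing sizes.

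\textbf{(ii)$\Rightarrow$(i).} I would show $\M = \cell(\M_\mu)$ by induction on $\rank_{\K^2}(f)$. Given a filtration $(f_i)_{i<\delta}$ of $f$ by quasieffective squares, write $f$ as a transfinite composite. The $i$-th step is $A \sqcup_{A_i} B_i \to A \sqcup_{A_{i+1}} B_{i+1}$, which is a pushout of the induced map $A_{i+1}\sqcup_{A_i}B_i \to B_{i+1}$.

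The main obstacle is precisely here. Quasieffectiveness gives $P \to B$ in $\M$ relative to the big object $A$, but I need small links: each induced map $A_{i+1}\sqcup_{A_i}B_i \to B_{i+1}$ should lie in $\M$ with smaller rank, so that induction applies. To get this I would use a filtration whose squares are quasieffective between consecutive stages as well, as arranged in (iii)$\Rightarrow$(ii) through the chain of $\NN_i$. If only the stated squares into $f$ are available, I would fall back on Makkai–Rosický's fat small object argument to reorganise the filtration. Once the links are small members of $\M$, cellular closure and induction give $f \in \cell(\M_\mu)$.
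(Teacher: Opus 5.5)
Your cycle (i)$\Rightarrow$(iii)$\Rightarrow$(ii)$\Rightarrow$(i) and your arguments for (i)$\Rightarrow$(iii) and (ii)$\Rightarrow$(i) are essentially the paper's. Your step 4 of (i)$\Rightarrow$(iii) is what the paper makes precise with $R_\alpha = \colim_{i \le \alpha,\, i \in \NN} A_i \restriction \NN$ and $P_\alpha = A_\alpha \sqcup_{R_\alpha} (B \restriction \NN)$: the links are pushouts of $f_{\alpha,\alpha+1}$ for $\alpha \notin \NN$ and isomorphisms for $\alpha \in \NN$. Your decomposition in (ii)$\Rightarrow$(i) is the top row of \thref{build-cellular-generation-from-filtration}. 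By \thref{cellular-generation-lfp-remarks}(2), the squares between stages are part of hypothesis (ii). So no fallback to the fat small object argument is needed there.

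The gap is in step 3 of (iii)$\Rightarrow$(ii). Applying (iii) in $\NN_{i+1}$ to $f \restriction \NN_i$ yields nothing. Since $\NN_i \subseteq \NN_{i+1}$, you have $(A \restriction \NN_i) \restriction \NN_{i+1} = A \restriction \NN_i$, so the square you get is the identity square on $f \restriction \NN_i$.

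What you actually need is that the comparison map $(A \restriction \NN_{i+1}) \sqcup_{A \restriction \NN_i} (B \restriction \NN_i) \to B \restriction \NN_{i+1}$ lies in $\M$. That is exactly the input your (ii)$\Rightarrow$(i) uses, so the cycle breaks here.

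The missing idea is to apply (iii) to a different morphism.
\begin{enumerate}
\item By (iii) in $\NN_j$ applied to $f$, the map $u_j \colon Q_j := A \sqcup_{A \restriction \NN_j} (B \restriction \NN_j) \to B$ is in $\M$.
\item Arrange that for all $j < i$ the span and $u_j$ are elements of $\NN_i$. Your $\in$-chain already gives this at successor stages via $\Sigma_1$-elementarity and \thref{levy-complexity-colimits-mod-t}, and at limit stages by continuity.
\item Since $\NN_j \subseteq \NN_i$, \thref{compute-colimits-correctly} gives $Q_j \restriction \NN_i = (A \restriction \NN_i) \sqcup_{A \restriction \NN_j} (B \restriction \NN_j)$. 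Moreover, $u_j \restriction \NN_i$ is precisely the comparison map into $B \restriction \NN_i$.
\item Applying (iii) in $\NN_i$ to $u_j \in \NN_i \cap \M$ gives $u_j \restriction \NN_i \in \M$.
\end{enumerate}
This handles all pairs $j \le i$ at once, which is what (ii) requires.
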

\begin{proof}
We prove the implications $\text{(ii)} \Rightarrow \text{(i)} \Rightarrow \text{(iii)} \Rightarrow \text{(ii)}$. Each implication will be presented in a separate statement, each with a little bit more detail about the assumptions, cardinals, and parameters involved.
\end{proof}
\begin{remark}
\thlabel{cellular-generation-lfp-remarks}
We make some remarks about \thref{cellular-generation-lfp}.
\begin{enumerate}[label=(\arabic*)]
\item We could not find a direct categorical proof of (i) $\Rightarrow$ (ii). Similarly, \thref{transfinite-compositions-of-cardinal-length} relies on the set-theoretical techniques, but there is a purely categorical version if some extra assumptions are placed on $\M$ (\thref{transfinite-compositions-of-cardinal-length-continuous}). 
\item In (ii), a morphism $f$ admitting a smooth filtration $(f_i)_{i < \lambda}$ in $\K^2$ that consists of $\M$-quasieffective squares means that every morphism in the diagram, as well as every coprojection, is an $\M$-quasieffective square.
\item A ``concrete presentation of $\K$ as $\Mod(T)$'' as in (iii) means that there is an equivalence of categories $F: \Mod(T) \to \K$, and we replace $\M$ with $F^{-1}(\M)$.
\item The terminology ``almost everywhere'' appearing in \ref{item_ae_quasi} comes from set theory, where something happens ``almost everywhere'' if it happens on a set from some fixed filter (typically, the filter generated by the closed unbounded subsets).  Though we do not explicitly use filters here, one \emph{could} rephrase everything in terms of the club filters on $V_\alpha$ for the closed unbounded class of $\alpha$ such that $(V_\alpha, \in) \prec_1 (V,\in)$.  One could also phrase the condition in terms of Shelah's stationary logic, which has a second-order ``almost all'' quantifier, see for example \cite[page 23]{coxMaximumDeconstructibilityModule2022}.
\end{enumerate}
\end{remark}
As a corollary to the proof of \thref{cellular-generation-lfp} we obtain the following improvement of \cite[Lemma 4.16]{makkaiFatSmallObject2014} for locally finitely presentable categories by showing that in a cellularly generated class, every morphism can be expressed as a transfinite composition of cardinal length. See also \thref{transfinite-compositions-of-cardinal-length-continuous} for a version for arbitrary locally presentable categories, but where $\M$ is assumed to be continuous and consist of monomorphisms.
\begin{corollary}
\thlabel{transfinite-compositions-of-cardinal-length}
Let $\M$ be a cellularly generated class of morphisms in a locally finitely presentable category $\K$. Then there is $\mu$ such that for all regular $\kappa \geq \mu$ any morphism $f \in \M$ between $\kappa$-presentable objects can be expressed as a transfinite composition of length exactly $\kappa'$ of morphisms in $\Po(\M_\mu)$, for some (possibly finite) cardinal $\kappa' < \kappa$.

If $\Mod(T)$ is a concrete presentation of $\K$, where $T$ is a limit theory in some finitary language $\L$, then we may take $\mu = (|\L| + \aleph_0)^+$.
\end{corollary}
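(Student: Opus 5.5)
We work with a concrete presentation $\K \simeq \Mod(T)$, where $T$ is a limit theory in a finitary language $\L$, and put $\mu = (|\L| + \aleph_0)^+$. Since $\M$ is cellularly generated, the implication (i) $\Rightarrow$ (iii) of \thref{cellular-generation-lfp} applies. We extract from it a regular uncountable $\kappa_0$ and a parameter $p$ witnessing almost everywhere quasieffectiveness. Here is the key point we expect the proof of (i) $\Rightarrow$ (iii) to provide: if $\M = \cell(\M_0)$ with $\M_0 \in \NN$, then the pushout morphism $P \to B$ lies in $\cell(\M_0 \restriction \NN')$ for suitable small $\NN'$. That proof should also show that one may take $\kappa_0 = \mu$ and $p$ coding a generating set $\M_0 \subseteq \M_\mu$. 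The reason is that we may replace the generators by $\M_\mu$, by the analogue of \thref{elimination-of-retracts} for cellular generation, which comes from the fat small object argument.

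Now fix a regular $\kappa \geq \mu$ and $f: A \to B$ in $\M$ with $A$ and $B$ both $\kappa$-presentable. By \thref{limit-theory-standard-facts}(iii), $|A|, |B| < \kappa$, so $|A \cup B| \le \kappa' $ for some cardinal $\kappa' < \kappa$ (allowing finite). We build a continuous increasing chain $(\NN_i)_{i \le \kappa'}$ of partial elementary subuniverses $\NN_i \prec_1 (V,\in)$. Each contains $p, T, \L, \mu, f$, has $\NN_i \cap \mu$ transitive, and has size $<\mu$ at successor stages. We also enumerate $A \cup B = \{x_\alpha : \alpha < \kappa'\}$ and arrange that $x_\alpha \in \NN_{\alpha+1}$. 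These come from \thref{Class_LS_Theorem}, taking unions at limits; a union of a chain of $\Sigma_1$-elementary submodels is again $\Sigma_1$-elementary. The union $\NN_{\kappa'}$ then contains $A \cup B$, so $f \restriction \NN_{\kappa'} = f$.

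Set $f_i = f \restriction \NN_i: A_i \to B_i$. By \thref{compute-colimits-correctly} and \thref{limit-theory-standard-facts}(ii), the chains $(A_i)$ and $(B_i)$ are smooth with colimits $A$ and $B$. We then define $C_0 = B_0$ and $C_i = A \sqcup_{A_i} B_i$, the pushout along $f_i$. This gives a smooth chain $A \to C_0 \to \dots \to C_{\kappa'} = B$; smoothness holds because colimits commute with colimits. Each link $C_i \to C_{i+1}$ is the pushout along $A_{i+1} \sqcup_{A_i} B_i \to B_{i+1}$ of the map $C_i \to C_{i+1}$, and that map is exactly the comparison morphism of the square $f_i \to f_{i+1}$.

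The main obstacle is to show that this comparison morphism lies in $\Po(\M_\mu)$ and not merely in $\M$. Quasieffectiveness relative to the single $\NN_{i+1}$ only gives that $A \sqcup_{A_{i+1}} B_{i+1} \to B$ is in $\M$. We therefore apply the criterion inside $\NN_{i+1}$, with $f_{i+1}$ viewed as the ambient morphism and $\NN_i$ as the subuniverse. For this we need $\NN_i \in \NN_{i+1}$, which we build into the construction, and we need reflection of the defining property through $\Sigma_1$-elementarity. This gives that the comparison map is in $\M$ and is $\mu$-presentable, since its objects have size $<\mu$ after taking $\NN_{i+1}$ small. Hence it lies in $\M_\mu$, and its pushout lies in $\Po(\M_\mu)$. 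Finally, prefixing with the link $A \to C_0 = A \sqcup_{A_0} B_0$, which is a pushout of $f_0 \in \M_\mu$, gives a transfinite composition of length $\kappa'$ up to reindexing; finite cases are handled directly. If this step fails, the fallback is to invoke directly the cellular decomposition produced in the proof of (i) $\Rightarrow$ (iii).
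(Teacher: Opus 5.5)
\paragraph{The reduction to generators in $\M_\mu$.} Your first load-bearing step fails. Nothing in the proof of (i)$\Rightarrow$(iii) lets you take $\kappa_0=\mu=(|\L|+\aleph_0)^+$ with a generating set inside $\M_\mu$. \thref{elimination-of-retracts} only turns $\M=\cof(\M_\lambda)$ into $\M=\cell(\M_\lambda)$ for the \emph{same} $\lambda$; it never shrinks generators. So assuming $\M=\cell(\M_\mu)$ assumes most of what is to be proved.

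The claim is also false in general. Take $\Set=\Mod(\emptyset)$, with empty language, so $\mu=\aleph_1$. Every map in $\M=\cell(\{\emptyset\to\omega_1\})$ is an injection whose complement is empty or uncountable. Hence $\M_{\aleph_1}$ consists only of bijections, and $\emptyset\to\omega_1\in\M$, a map between $\aleph_2$-presentable objects, is not a transfinite composition of maps in $\Po(\M_{\aleph_1})$.

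The size of $\kappa_0$ genuinely obstructs your chain. Suppose $\kappa_0=\nu^+$, $\kappa_0\in\NN\prec_1(V,\in)$ and $\NN\cap\kappa_0$ is transitive. The $\Sigma_1$ fact ``some $\beta<\kappa_0$ maps onto every nonzero $\alpha<\kappa_0$'' then has a witness $\beta\in\NN$ with $|\beta|=\nu$. Transitivity gives $\beta\subseteq\NN$, so $|\NN|\geq\nu$. Thus $\NN_i$ of size $<\mu$ requires $\kappa_0\le\mu$, which is exactly what fails.

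The same example shows that the second sentence of the statement cannot be proved as written. The paper's appeal to \thref{lfp-ae-quasieffective-implies-filtrations} meets the same obstruction: its $\NN_i$ must be smaller than $A\cup B$ while $\NN_i\cap\kappa$ is transitive. What survives is the first sentence, with $\mu$ depending on the generators, e.g.\ $\mu$ equal to the cardinal $\kappa$ of \thref{lfp-cellular-generation-implies-ae-quasieffective}.

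\paragraph{The one-shot chain.} Even with a correct, larger $\mu$, this does not work. A continuous increasing chain whose successor members all have size $<\mu$ has union of size at most $\mu$: if $\kappa'>\mu$, then $\NN_{\mu+1}\supseteq\{x_\alpha:\alpha<\mu\}$ already has size $\mu$. So the chain cannot exhaust $A\cup B$ once $|A\cup B|>\mu$, and a one-shot decomposition into pushouts of maps in $\M_\mu$ is the wrong shape.

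The missing idea is the induction on cardinals in \thref{filtrations-imply-cellular-generation}. Filter $f$ of rank $\nu^+$ (with $\nu\geq\mu$) in length $\cf(\nu)$ by restrictions of size $<\nu$. Then \thref{build-cellular-generation-from-filtration} writes $f$ as a composition of pushouts of maps in $\M_\nu$; your chain $C_i=A\sqcup_{A_i}B_i$ is a tidy variant of that construction. The induction hypothesis expands each such map into a composition of length $\kappa_i<\nu$ of maps in $\Po(\M_\mu)$, and pushouts preserve this. Concatenating gives length $\sum_{i<\cf(\nu)}\kappa_i\leq\nu$, padded to exactly $\nu$ by identities.

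\paragraph{The comparison map.} Your argument that the comparison map lies in $\M$ does not match the hypothesis, since $f_{i+1}\notin\NN_i$. Instead, argue as follows. The map $u\colon A\sqcup_{A_i}B_i\to B$ is in $\M$ by the hypothesis for $\NN_i$, and $u\in\NN_{i+1}$ by construction. By \thref{compute-colimits-correctly}, $u\restriction\NN_{i+1}$ is exactly $A_{i+1}\sqcup_{A_i}B_i\to B_{i+1}$, so the hypothesis for $\NN_{i+1}$ applies.
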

\begin{proof}
This is the ``moreover'' part of \thref{filtrations-imply-cellular-generation}. The part about the choice of $\mu$ for $\K$ of the form $\Mod(T)$ follows from \thref{lfp-ae-quasieffective-implies-filtrations}.
\end{proof}
\begin{lemma}
\thlabel{build-cellular-generation-from-filtration}
Let $\M$ be a class of morphisms in a cocomplete category $\K$ and let $\kappa$ be a regular cardinal. Suppose that $f \in \M$ is the colimit of a smooth $(< \kappa)$-small chain $(f_i: A_i \to B_i)_{i < \kappa'}$ in $\K^2$, where $\kappa' < \kappa$ is a cardinal, that consists of $\M$-quasieffective squares. Then $f \in \cell(\M_{< \kappa})$. More precisely, $f$ can be expressed as a transfinite composition of length exactly $\kappa'$ of morphisms in $\Po(\M_{< \kappa})$.
\end{lemma}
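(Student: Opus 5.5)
The plan is to build the transfinite composition directly from the filtration, using the ``pushout-corner'' objects. For each $i < \kappa'$ form the pushout
\[
C_i := A \sqcup_{A_i} B_i
\]
of the span $A \leftarrow A_i \xrightarrow{f_i} B_i$, where $A_i \to A$ is the coprojection. The squares $f_i \to f_j$ and $f_i \to f$ induce maps $C_i \to C_j$ and $C_i \to B$, so we get a chain. Prepend $C_{-1} := A$ with $A \to C_0$ the pushout coprojection. The claim is that $A = C_{-1} \to C_0 \to C_1 \to \cdots$ is a smooth chain with colimit $B$. Its colimit map from $A$ is $f$, and each link lies in $\Po(\M_{< \kappa})$. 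Since $1 + \kappa' = \kappa'$ for infinite $\kappa'$, and since in the finite case one checks directly that there are exactly $\kappa'$ links, this gives a transfinite composition of length exactly $\kappa'$.

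First, the membership facts. Every square in the filtration, including the coprojections $f_i \to f$, is $\M$-quasieffective, so its horizontal arrows lie in $\M$; in particular each $f_i \in \M$. Since $A_i, B_i$ are $(<\kappa)$-presentable, $f_i$ is $(<\kappa)$-presentable in $\K^2$ (the easy direction of \thref{presentability-of-arrows}), so $f_i \in \M_{<\kappa}$. Hence $A \to C_0$, being a pushout of $f_0$, lies in $\Po(\M_{<\kappa})$.

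For the links $C_i \to C_{i+1}$, let
\[
P_i := A_{i+1} \sqcup_{A_i} B_i .
\]
Quasieffectiveness of the square $f_i \to f_{i+1}$ says exactly that the induced map $P_i \to B_{i+1}$ is in $\M$. Because $P_i$ is a finite colimit of $(<\kappa)$-presentable objects and $\kappa$ is regular, it is $(<\kappa)$-presentable by \thref{presentability-facts}(i). Hence $P_i \to B_{i+1}$ is in $\M_{<\kappa}$. By pasting of pushouts,
\[
C_i = A \sqcup_{A_i} B_i \cong A \sqcup_{A_{i+1}} P_i ,
\]
and therefore
\[
C_i \sqcup_{P_i} B_{i+1} \cong A \sqcup_{A_{i+1}} B_{i+1} = C_{i+1}.
\]
So $C_i \to C_{i+1}$ is the pushout of $P_i \to B_{i+1}$ along the canonical map $P_i \to C_i$, and hence lies in $\Po(\M_{<\kappa})$. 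The main point to verify carefully is this pasting identification: that the canonical map $C_i \to C_{i+1}$ really is the pushout coprojection. I would check it by comparing universal properties of the cocones involved.

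Finally, smoothness and the colimit follow because colimits commute with colimits. At a limit $\ell < \kappa'$ we have $A_\ell = \colim_{i<\ell} A_i$ and $B_\ell = \colim_{i<\ell} B_i$, so
\[
C_\ell = A \sqcup_{A_\ell} B_\ell = \colim_{i<\ell} C_i .
\]
Likewise the colimit of the whole chain is $A \sqcup_{A} B = B$, and the composite map out of $A$ is $f$, by \thref{presentability-facts}(ii) applied to $f = \colim f_i$. In the finite case the last stage is $C_{\kappa'-1} = A \sqcup_{A} B = B$ directly. Thus $f \in \Tc(\Po(\M_{<\kappa})) = \cell(\M_{<\kappa})$, with length exactly $\kappa'$ as required.
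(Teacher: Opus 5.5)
Your proof is correct. The chain you build is in fact the paper's chain, but you reach it by a more direct route.

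The paper constructs a whole grid $(P_{i,j})_{i \le j \le \kappa'}$ by nested transfinite induction. It proves inductively that the rows are smooth and that all rectangles are pushouts (facts (F1)--(F3)), and then reads off the top row. By (F3) one has $P_{i+1,\kappa'} \cong A \sqcup_{A_i} B_i$, which is exactly your $C_i$. The only difference is that the paper keeps an isomorphism link $P_{\ell,\kappa'} \to P_{\ell+1,\kappa'}$ at each limit $\ell$.

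Writing the top row down in closed form buys you the following:
\begin{itemize}
\item The nested induction is replaced by two applications of the pasting law (for the links) and one interchange of colimits (for smoothness and for $\colim_{i<\kappa'} C_i = B$).
\item You never need presentability of the intermediate grid objects, which is where the paper invokes regularity of $\kappa$ and $\kappa' < \kappa$ in (F4). Beyond the hypotheses, you only need the finite pushout $A_{i+1} \sqcup_{A_i} B_i$ to be $(<\kappa)$-presentable.
\end{itemize}
The price is the small bookkeeping for length exactly $\kappa'$: $1 + \kappa' = \kappa'$ for infinite $\kappa'$, and a direct count otherwise. The paper gets this for free from its indexing. You also handle the first link, a pushout of $f_0$, explicitly, whereas the paper's limit/successor case split leaves $i = 0$ implicit.

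One minor remark: the hypothesis literally says each $f_i$ is $(<\kappa)$-presentable in $\K^2$. To get $A_i$ and $B_i$ presentable in $\K$, which you need for $P_i$, you want the nontrivial direction of \thref{presentability-of-arrows}. It applies because a cocomplete $\K$ has the amalgamation property. The paper makes the same identification tacitly.
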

\begin{proof}
Let $f: A \to B$ and $(f_i: A_i \to B_i)_{i < \kappa'}$ be as in the statement. That is, $\kappa' < \kappa$ and $A_i$ and $B_i$ are $(< \kappa)$-presentable in $\K^2$ for all $i < \kappa'$. We add one more link to the chain: $f_{\kappa'}: A_{\kappa'} \to B_{\kappa'}$ is simply $f: A \to B$, so $(f_i: A_i \to B_i)_{i \leq \kappa'}$ is still smooth.

We build a diagram $(P_{i,j})_{i \leq j \leq \kappa'}$ as pictured below by nested induction.
\[\begin{tikzcd}[column sep=small]
	{A_{\kappa'}} & {P_{1,\kappa'}} & {P_{2,\kappa'}} & {P_{\omega,\kappa'}} & {P_{\omega+1,\kappa'}} & {P_{\kappa',\kappa'} = B_{\kappa'}} \\
	\\
	{A_{\omega+1} = P_{0,\omega+1}} & {P_{1,\omega+1}} & {P_{2,\omega+1}} & {P_{\omega,\omega+1}} & {P_{\omega+1,\omega+1}} & {B_{\omega+1}} \\
	{A_\omega = P_{0,\omega}} & {P_{1,\omega}} & {P_{2,\omega}} & {P_{\omega,\omega}} && {B_\omega} \\
	\\
	{A_2 = P_{0,2}} & {P_{1,2}} & {P_{2,2}} &&& {B_2} \\
	{A_1 = P_{0,1}} & {P_{1,1}} &&&& {B_1} \\
	{A_0 = P_{0,0}} &&&&& {B_0}
	\arrow[from=1-1, to=1-2]
	\arrow[from=1-2, to=1-3]
	\arrow["\lrcorner"{anchor=center, pos=0.125, rotate=-90}, draw=none, from=1-2, to=3-1]
	\arrow[dashed, from=1-3, to=1-4]
	\arrow["\lrcorner"{anchor=center, pos=0.125, rotate=-90}, draw=none, from=1-3, to=3-2]
	\arrow[equal, from=1-4, to=1-5]
	\arrow["\lrcorner"{anchor=center, pos=0.125, rotate=-90}, draw=none, from=1-4, to=3-3]
	\arrow[dashed, from=1-5, to=1-6]
	\arrow["\lrcorner"{anchor=center, pos=0.125, rotate=-90}, draw=none, from=1-5, to=3-4]
	\arrow[dashed, from=3-1, to=1-1]
	\arrow[from=3-1, to=3-2]
	\arrow[dashed, from=3-2, to=1-2]
	\arrow[from=3-2, to=3-3]
	\arrow["\lrcorner"{anchor=center, pos=0.125, rotate=-90}, draw=none, from=3-2, to=4-1]
	\arrow[dashed, from=3-3, to=1-3]
	\arrow[dashed, from=3-3, to=3-4]
	\arrow["\lrcorner"{anchor=center, pos=0.125, rotate=-90}, draw=none, from=3-3, to=4-2]
	\arrow[dashed, from=3-4, to=1-4]
	\arrow[equal, from=3-4, to=3-5]
	\arrow["\lrcorner"{anchor=center, pos=0.125, rotate=-90}, draw=none, from=3-4, to=4-3]
	\arrow[dashed, from=3-5, to=1-5]
	\arrow[from=3-5, to=3-6]
	\arrow["\lrcorner"{anchor=center, pos=0.125, rotate=-90}, draw=none, from=3-5, to=4-4]
	\arrow[dashed, from=3-6, to=1-6]
	\arrow[from=4-1, to=3-1]
	\arrow[from=4-1, to=4-2]
	\arrow[from=4-2, to=3-2]
	\arrow[from=4-2, to=4-3]
	\arrow["\lrcorner"{anchor=center, pos=0.125, rotate=-90}, draw=none, from=4-2, to=6-1]
	\arrow[from=4-3, to=3-3]
	\arrow[dashed, from=4-3, to=4-4]
	\arrow["\lrcorner"{anchor=center, pos=0.125, rotate=-90}, draw=none, from=4-3, to=6-2]
	\arrow[from=4-4, to=3-4]
	\arrow[equal, from=4-4, to=4-6]
	\arrow[from=4-6, to=3-5]
	\arrow[from=4-6, to=3-6]
	\arrow[dashed, from=6-1, to=4-1]
	\arrow[from=6-1, to=6-2]
	\arrow[dashed, from=6-2, to=4-2]
	\arrow[from=6-2, to=6-3]
	\arrow["\lrcorner"{anchor=center, pos=0.125, rotate=-90}, draw=none, from=6-2, to=7-1]
	\arrow[dashed, from=6-3, to=4-3]
	\arrow[from=6-3, to=6-6]
	\arrow["\lrcorner"{anchor=center, pos=0.125, rotate=-90}, draw=none, from=6-3, to=7-2]
	\arrow[dashed, from=6-6, to=4-4]
	\arrow[dashed, from=6-6, to=4-6]
	\arrow[from=7-1, to=6-1]
	\arrow[from=7-1, to=7-2]
	\arrow[from=7-2, to=6-2]
	\arrow[from=7-2, to=7-6]
	\arrow["\lrcorner"{anchor=center, pos=0.125, rotate=-90}, draw=none, from=7-2, to=8-1]
	\arrow[from=7-6, to=6-3]
	\arrow[from=7-6, to=6-6]
	\arrow[from=8-1, to=7-1]
	\arrow[from=8-1, to=8-6]
	\arrow[from=8-6, to=7-2]
	\arrow[from=8-6, to=7-6]
\end{tikzcd}\]
The outer induction is on the rows (the $j$ coordinate in $P_{i,j}$) and the inner induction is on the columns (the $i$ coordinate in $P_{i,j}$).
\begin{itemize}
\item Row $0$ is simply the morphism $f_0: P_{0,0} = A_0 \to B_0$.
\item Having constructed row $j$ we construct row $j+1$ as follows.
\begin{itemize}
\item We set $P_{0,j+1} = A_{j+1}$.
\item For the successor step we assume to have constructed $P_{i,j+1}$, where $i \leq j$. We construct $P_{i+1,j+1}$ based on two cases. If $i = j$, so we are constructing the last link in this row, then we let $P_{i,j+1} \to P_{i+1,j+1} \leftarrow B_j$ be the pushout of $P_{i,j+1} \leftarrow P_{i,j} \to B_j$. Otherwise, if $i < j$, we let $P_{i,j+1} \to P_{i+1,j+1} \leftarrow P_{i+1,j}$ be the pushout of $P_{i,j+1} \leftarrow P_{i,j} \to P_{i+1,j}$.
\item For the limit step we assume to have constructed $P_{i,j+1}$ for all $i < \ell$, where $\ell \leq j$ is a limit. We construct $P_{\ell,j+1}$ as the colimit of the chain $(P_{i,j+1})_{i < \ell}$.
\end{itemize}
The final link $P_{j+1,j+1}$ in the row is a pushout, whose universal property yields a morphism $P_{j+1,j+1} \to B_{j+1}$ so that composing this with the transfinite composition $A_{j+1} \to P_{j+1,j+1}$ yields exactly $f_{j+1}$.
\item Having constructed all rows up to limit $\ell$, we construct row $\ell$ by simply letting $P_{i,\ell}$, for $i < \ell$, be the colimit of the chain $(P_{i,j})_{j < \ell}$. The universal property of this colimit then yields the morphisms $P_{i,\ell} \to P_{i',\ell}$ for all $i < i' \leq \ell$ (here we make use of the fact that $P_{0, \ell} = A_\ell = \colim_{j < \ell} A_j = \colim_{j < \ell} P_{0,j}$). Finally, we set $P_{\ell,\ell} = B_\ell$.
\end{itemize}
The following facts about the construction can be proved by induction.
\begin{enumerate}[label=(F\arabic*)]
\item For any $\ell \leq j \leq \kappa'$ with $\ell$ limit, $P_{\ell,j}$ is the colimit of the chain $(P_{i,j})_{i < \ell}$. That is, the chain $(P_{i,j})_{i \leq j}$ is continuous. Indeed, if $j$ is a successor then this holds by construction. If $j$ is a limit then for $\ell < j$ this follows by induction: the statement holds for the rows below $j$ and so $P_{\ell,j} = \colim_{j' < j} P_{\ell,j'} = \colim_{j' < j} \colim_{i < \ell} P_{i,j'} = \colim_{i < \ell} \colim_{j' < j} P_{i,j'} = \colim_{i < \ell} P_{i, j}$. For $j = \ell$ we have that $P_{\ell,\ell} = B_\ell = \colim_{j < \ell} B_j = \colim_{j < \ell} P_{j,j} = \colim_{i \leq j < \ell} P_{i,j} = \colim_{i < \ell} P_{i, \ell}$.
\item Any rectangle of $P_{i,j}$'s is a pushout. More precisely, for any $i' \leq i \leq j' \leq j \leq \kappa'$ we have that the square below is a pushout square.
\[\begin{tikzcd}[sep=small]
	{P_{i',j}} & {P_{i,j}} \\
	{P_{i',j'}} & {P_{i,j'}}
	\arrow[from=1-1, to=1-2]
	\arrow["\lrcorner"{anchor=center, pos=0.125, rotate=-90}, draw=none, from=1-2, to=2-1]
	\arrow[from=2-1, to=1-1]
	\arrow[from=2-1, to=2-2]
	\arrow[from=2-2, to=1-2]
\end{tikzcd}\]
This is seen by induction on the ordinals $\alpha$ and $\beta$ such that $i = i' + \alpha$ and $j = j' + \beta$. Successor steps follow from the pasting law for pushouts and limit stages follow because colimits of chains (see the construction and (F1)) commute with pushouts.
\item For all $j' < j \leq \kappa'$ we have that $A_{j} \to P_{j'+1,j} \leftarrow B_{j'}$ is the pushout of $A_j \leftarrow A_{j'} \to B_{j'}$. This follows from composition of the pushouts below.
\[\begin{tikzcd}[sep=small]
	{A_j = P_{0,j}} & {P_{j',j}} & {P_{j'+1,j}} \\
	{A_{j'+1} =P_{0,j'+1}} & {P_{j',j'+1}} & {P_{j'+1,j'+1}} \\
	{A_{j'} = P_{0,j'}} & {P_{j',j'}} & {B_{j'}}
	\arrow[from=1-1, to=1-2]
	\arrow[from=1-2, to=1-3]
	\arrow["\lrcorner"{anchor=center, pos=0.125, rotate=-90}, draw=none, from=1-2, to=2-1]
	\arrow["\lrcorner"{anchor=center, pos=0.125, rotate=-90}, draw=none, from=1-3, to=2-2]
	\arrow[from=2-1, to=1-1]
	\arrow[from=2-1, to=2-2]
	\arrow[from=2-2, to=1-2]
	\arrow[from=2-2, to=2-3]
	\arrow["\lrcorner"{anchor=center, pos=0.125, rotate=-90}, draw=none, from=2-2, to=3-1]
	\arrow[from=2-3, to=1-3]
	\arrow["\lrcorner"{anchor=center, pos=0.125, rotate=-90}, draw=none, from=2-3, to=3-2]
	\arrow[from=3-1, to=2-1]
	\arrow[from=3-1, to=3-2]
	\arrow[from=3-2, to=2-2]
	\arrow[from=3-2, to=3-3]
	\arrow[from=3-3, to=2-3]
\end{tikzcd}\]
Here all squares are pushouts by (F2), except for the bottom right square, which is a pushout by construction.
\item Every object, except those in row $\kappa'$, is $(< \kappa)$-presentable. For the $A_j$'s and $B_j$'s this is by assumption. For the $P_{i,j}$'s this is because they are always constructed as a colimit of a $\kappa'$-small diagram of objects that are, by the induction hypothesis, $(< \kappa)$-presentable (using that $\kappa$ is regular). Here it is important that $\kappa'$ is a cardinal.
\end{enumerate}
We have now expressed $f: A \to B$ as the transfinite composition of the top row in the diagram. Indeed, this top row forms a continuous chain by (F1). It remains to show that each link of the top row is in $\cell(\M_{< \kappa})$. Fix $i < \kappa'$. The link $P_{i,\kappa'} \to P_{i+1,\kappa'}$ is a pushout of $P_{i,i} \to B_i$, by (F2) and composition of pushouts. If $i$ is a limit then $P_{i,i} \to B_i$ is the identity by construction, so $P_{i,\kappa'} \to P_{i+1,\kappa'}$ is an isomorphism and is thus in $\Po(\M_{< \kappa})$. If $i$ is a successor, say $i = j+1$, then we recall that the square
\[\begin{tikzcd}[sep=small]
	{A_i} & {B_i} \\
	{A_j} & {B_j}
	\arrow[from=1-1, to=1-2]
	\arrow[from=2-1, to=1-1]
	\arrow[from=2-1, to=2-2]
	\arrow[from=2-2, to=1-2]
\end{tikzcd}\]
is $\M$-quasieffective. As $P_{i,i}$ is the pushout of $A_i \leftarrow A_j \to B_j$ by (F3), we have that $P_{i,i} \to B_i$ is in $\M$. Finally, $P_{i,i}$ and $B_i$ are $(< \kappa)$-presentable by (F4), so $P_{i,i} \to B_i$ is in fact in $\M_{<\kappa}$. We conclude that $P_{i,\kappa'} \to P_{i+1,\kappa'}$ is in $\Po(\M_{< \kappa})$, as required.
\end{proof}
\begin{proposition}[{(ii) $\Rightarrow$ (i) in \thref{cellular-generation-lfp}}]
\thlabel{filtrations-imply-cellular-generation}
Let $\M$ be a cellularly closed class of morphisms in a locally $\lambda$-presentable category $\K$. Suppose that $\mu \geq \lambda$ is such that every $f \in \M$, with $\rank_{\K^2}(f) > \mu$, admits a smooth filtration in $\K^2$ that consists of $\M$-quasieffective squares. Then $\M = \cell(\M_\mu)$, so $\M$ is cellularly generated.

Moreover, for any $\kappa \geq \mu$ and any $f \in \M_\kappa$ there is a (possibly finite) cardinal $\kappa' < \kappa$ such that $f$ is a transfinite composition of length exactly $\kappa'$ of morphisms in $\Po(\M_\mu)$.
\end{proposition}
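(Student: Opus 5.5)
Prove $\M = \cell(\M_\mu)$ together with the ``moreover'' part, by transfinite induction on the presentability rank of $f \in \M$ in $\K^2$. The inclusion $\cell(\M_\mu) \subseteq \cell(\M) = \M$ is immediate since $\M$ is cellularly closed, so the work is the reverse inclusion. We may as well prove the stronger statement: for every regular $\kappa \geq \mu$ (reading $\M_\kappa$ as the $\kappa$-presentable morphisms in $\M$; for singular $\kappa$ one reduces to $\kappa^+$ via the definition of $\M_\kappa$) and every $f \in \M_\kappa$, $f$ is a transfinite composition of length exactly some cardinal $\kappa'<\kappa$ of morphisms in $\Po(\M_\mu)$. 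Since every object of $\K^2$ is presentable ($\K^2$ is locally presentable), every $f \in \M$ lies in some $\M_\kappa$, and this yields $\M \subseteq \cell(\M_\mu)$.

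Fix $f \in \M$ and let $\rho = \rank_{\K^2}(f)$.

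If $\rho \leq \mu$, then $f \in \M_\mu$ itself. It is a transfinite composition of length $1$ of a morphism in $\Po(\M_\mu)$, and $1 < \kappa$.

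If $\rho > \mu$, the hypothesis gives a smooth filtration $(f_i)_{i<\delta}$ of $f$ in $\K^2$, consisting of $\M$-quasieffective squares and with each $f_i$ $(<\rho)$-presentable. Here $\K^2$ is locally $\lambda$-presentable with $\lambda \le \mu < \rho$, so by \thref{successor-presentability-rank} $\rho$ is a successor cardinal $\nu^+$ with $\nu \geq \mu$. Thus each $f_i$ is $\nu$-presentable and $\delta < \nu^+$. Two adjustments are needed before applying \thref{build-cellular-generation-from-filtration}:
\begin{itemize}
\item[(a)] Each $f_i$ must lie in $\M$. This holds because a coprojection $f_i \to f$ being $\M$-quasieffective includes the statement that its horizontal morphisms $f_i$ and $f$ lie in $\M$.
\item[(b)] The chain length should be a cardinal. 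Replace $\delta$ by $\mathrm{cf}(\delta)$ by passing to a cofinal smooth subchain, obtained by reindexing along a normal cofinal sequence $\mathrm{cf}(\delta) \to \delta$. The composite squares remain $\M$-quasieffective because $\M$-quasieffective squares compose, as noted after \thref{quasieffective-square}. Alternatively, pad the chain with identities to cardinal length $|\delta| \leq \nu$, since an identity square is trivially quasieffective.
\end{itemize}
If $\delta$ is a successor, the colimit is attained at the last stage and the rank bound fails, so $\delta$ is a limit and we may take $\kappa' := \mathrm{cf}(\delta)$ or $|\delta|$, which in either case is $\leq \nu < \rho \leq \kappa$.

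With these adjustments, \thref{build-cellular-generation-from-filtration}, applied with the regular cardinal $\rho = \nu^+$, expresses $f$ as a transfinite composition of length $\kappa'$ of morphisms in $\Po(\M_{<\rho}) = \Po(\M_\nu)$. Each link is a pushout of some $g \in \M_\nu$ with $\rank(g) < \rho$. Either $\rank(g) \le \mu$, in which case $g \in \M_\mu$ already, or by the induction hypothesis $g$ is a transfinite composition of morphisms in $\Po(\M_\mu)$. Pushouts of transfinite compositions are transfinite compositions of pushouts, so each link lies in $\cell(\M_\mu)$. This proves $\M = \cell(\M_\mu)$.

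The main obstacle is the ``exactly $\kappa'$'' clause. After the inductive substitution, $f$ is a composition of length $\kappa'$ whose links are themselves compositions, and the flattened length is an ordinal sum, not necessarily a cardinal. To get exactly length $\kappa'$ with links in $\Po(\M_\mu)$, I would first try to avoid the induction entirely: take a filtration with $\mu$-presentable stages, so that the links $P_{i,i}\to B_i$ in \thref{build-cellular-generation-from-filtration} land in $\M_\mu$ directly. Concretely, iterate filtrations (filter each $f_i$ again) and then reorganise the resulting tree of chains into a single smooth chain of cardinal length of quasieffective squares with $\mu$-presentable stages, using the composability of quasieffective squares. If that reorganisation is too delicate, I would settle for an ordinal length $<\kappa$ and then note that the flattened ordinal has cardinality $<\kappa$, using regularity of $\kappa$ and the bound on the number of links at each level. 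Recovering exact cardinal length then requires a reindexing argument, and that is where I expect the real care to be needed.
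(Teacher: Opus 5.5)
Your proof of $\M=\cell(\M_\mu)$ is correct and is essentially the paper's argument:
\begin{itemize}
\item the rank is a successor $\nu^+$ by \thref{successor-presentability-rank};
\item you pass to a cofinal smooth subchain and apply \thref{build-cellular-generation-from-filtration} with the regular cardinal $\nu^+$;
\item the induction hypothesis handles the generators in $\M_\nu$.
\end{itemize}
Two small points. You do not need composability of quasieffective squares for the subchain: by the paper's convention every morphism of the filtration, not only the links, is $\M$-quasieffective. Your alternative of ``padding to length $|\delta|$'' does not work, since $|\delta|\le\delta$ and inserting identities only lengthens a chain.

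The genuine gap is the ``moreover'' clause, which you leave open, and neither of your two routes closes it.

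Your first route (filtering with $\mu$-presentable stages) cannot work. To get links in $\Po(\M_\mu)$ from the filtration lemma, the chain must have length below $\mu^+$. But a chain of fewer than $\mu^+$ many $\mu$-presentable objects has a $\mu^+$-presentable colimit, so it cannot have $f$ as colimit once $\rank_{\K^2}(f)>\mu^+$.

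Your second route aims at the wrong target with too weak a bound. For $\rank_{\K^2}(f)=\nu^+$, the cardinal length to aim for is $\nu$ itself, not $\cf(\delta)$ or $|\delta|$. Knowing only that the flattened ordinal has cardinality at most $\nu$ is not enough: an ordinal $\beta$ with $\nu<\beta<\nu^+$ cannot be padded to a cardinal length below $\nu^+$. What you need is that the ordinal itself is $\le\nu$. The paper gets this cheaply:
\begin{itemize}
\item Put the exact-length claim into the induction hypothesis. Then each generator $g_i\in\M_\nu$ is a transfinite composition of length exactly some cardinal $\kappa_i<\nu$ of maps in $\Po(\M_\mu)$, and so is its pushout.
\item Take the cofinal subchain of length $\cf(\delta)=\cf(\nu)$.
\item Then $f$ is a transfinite composition of length $\sum_{i<\cf(\nu)}\kappa_i$. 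Every proper partial sum has cardinality $<\nu$, so the whole sum is $\le\nu$.
\item Appending identities at the end gives length exactly $\nu<\nu^+$.
\end{itemize}
No tree reorganisation or reindexing argument is needed.
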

\begin{proof}
We prove the statement, including the ``moreover'' part, for $\M_\kappa$ by induction on the cardinal $\kappa$. The statement is trivial for $\kappa = \mu$. Furthermore, for a limit cardinal $\kappa$ we have $\M_\kappa = \bigcup_{\kappa' < \kappa} \M_{\kappa'}$ by \thref{successor-presentability-rank} (using that $\mu \geq \lambda$), so limit stages trivially follow from the induction hypothesis.

Now assume we have proved the statement for $\M_\kappa$. Let $f \in \M_{\kappa^+} \setminus \M_\kappa$. By assumption $f$ admits a smooth filtration $(f_i)_{i < \delta}$ in $\K^2$ that consists of $\M$-quasieffective squares. So $\delta < \kappa^+$, hence $\cf(\delta) = \cf(\kappa)$ and by taking a cofinal subchain we may assume $\delta = \cf(\kappa)$. By \thref{build-cellular-generation-from-filtration} we then have that $f$ can be expressed as a transfinite composition of some chain $(g_{ij})_{i \leq j < \delta}$, where $g_{i,i+1}$ is a pushout of some $g'_{i,i+1} \in \M_{\kappa}$ for each $i < \delta$. By the induction hypothesis we have that $g'_{i,i+1}$ is a transfinite composition of cardinal length $\kappa_i < \kappa$ of morphisms in $\Po(\M_\mu)$. Since pushouts commute with transfinite compositions, we have that $g_{i,i+1}$ is a transfinite composition of length $\kappa_i$ of morphisms in $\Po(\M_\mu)$. We conclude that $f$ is a transfinite composition whose length is the ordinal sum $\sum_{i < \delta} \kappa_i$, which is at most $\kappa$ because $\delta = \cf(\kappa)$. By inserting identity morphisms if necessary we may assume this composition is of length $\kappa$, as required.
\end{proof}
\begin{proposition}[{(iii) $\Rightarrow$ (ii) in \thref{cellular-generation-lfp}}]
\thlabel{lfp-ae-quasieffective-implies-filtrations}
Let $T$ be a limit theory in some finitary language $\L$ and let $\M$ be a class of morphisms in $\Mod(T)$. Suppose that there is a regular uncountable $\kappa > |\L|$ and a parameter $p$ such that for every $\NN \prec_1 (V, \in)$, with $p,T,\L,\kappa \in \NN$ and $\NN \cap \kappa$ transitive, the following holds. For every $f: A \to B$ in $\NN \cap \M$ the square below is $\M$-quasieffective.
\[\begin{tikzcd}[sep=small]
	A & B \\
	{A \restriction \NN} & {B \restriction \NN}
	\arrow["f", from=1-1, to=1-2]
	\arrow[hook, from=2-1, to=1-1]
	\arrow["{f \restriction \NN}"', from=2-1, to=2-2]
	\arrow[hook, from=2-2, to=1-2]
\end{tikzcd}\]
Then for all $f \in \M$ with $\rank_{\K^2}(f) > (|\L| + \aleph_0)^+$ there is a smooth filtration in $\K^2$ that consists of $\M$-quasieffective squares.
\end{proposition}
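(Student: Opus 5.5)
We fix $f: A \to B$ in $\M$ with $\rank_{\K^2}(f) = \nu^+ > (|\L|+\aleph_0)^+$, where we write $\K = \Mod(T)$. By \thref{successor-presentability-rank} the rank is indeed a successor $\nu^+$ with $\nu \geq (|\L| + \aleph_0)^+$. By \thref{presentability-of-arrows} (note that $\Mod(T)$, being cocomplete, has amalgamation) together with \thref{limit-theory-standard-facts}(iii), we get $|A|, |B| \leq \nu$, with at least one of them of size exactly $\nu$. The plan is to build a continuous $\in$-chain $(\NN_i)_{i < \cf(\nu)}$ of subuniverses and to take $f_i := f \restriction \NN_i$. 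To make this work we first enlarge $\kappa$: we choose a regular $\theta$ with $\kappa \le \theta$ and $\theta > \nu$, or rather we work with the given $\kappa$ and split into the cases $\nu < \kappa$ and $\nu \geq \kappa$. Since the hypothesis only gives us transitivity of $\NN \cap \kappa$, the cleanest route is to apply \thref{Class_LS_Theorem} with the regular cardinal $\nu$ in place of $\kappa$, which requires $\nu$ to be regular. For singular $\nu$ we instead use $\nu$ as a union of smaller regular cardinals. Concretely, we build an increasing continuous chain $(\NN_i)_{i<\cf(\nu)}$ with $\NN_i \prec_1 (V,\in)$, containing $p, T, \L, \kappa, f$ and an enumeration of $A \sqcup B$ of length $\nu$, such that $|\NN_i| < \nu$, $\NN_i \cap \kappa$ is transitive, $\bigcup_i \NN_i \supseteq A \cup B$, and $\langle \NN_j : j \le i\rangle \in \NN_{i+1}$. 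Successor steps use \thref{Class_LS_Theorem} with a regular cardinal between $|\NN_i|^+$ and $\nu$ (or $\nu$ itself if $\nu$ is regular). Transitivity of $\NN \cap \kappa$ is preserved at limits, since unions of downward closed sets are downward closed, and $\prec_1$ is preserved under unions of chains. We ensure $\NN\cap\kappa$ transitive by taking, at each step, an $\NN$ satisfying the conclusion of \thref{Class_LS_Theorem} for a regular cardinal $\geq \kappa$; this is possible because $|\NN_i| \geq \kappa$ can be arranged, since $\nu \ge \kappa$ may be assumed WLOG by raising $\kappa$'s role to $\max(\kappa, \nu)$ in the chosen LS-cardinal. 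The case $\nu < \kappa$ needs care, and I discuss it below.

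Given the chain, we set $f_i := f \restriction \NN_i : A\restriction \NN_i \to B \restriction \NN_i$. Each $f_i$ lies in $\M$: apply the hypothesis to $\NN_i$. The square is then $\M$-quasieffective, with horizontal arrows $f_i$ and $f$, which includes $f_i \in \M$. The sizes are $<\nu$, so the $f_i$ are $\nu$-presentable by \thref{limit-theory-standard-facts}(iii) and \thref{presentability-of-arrows}; hence the chain is $(<\nu^+)$-small with length $\cf(\nu) < \nu^+$. Smoothness and the fact that the colimit is $f$ follow from \thref{limit-theory-standard-facts}(ii), since directed colimits are unions of substructures and $\NN_\ell = \bigcup_{i<\ell}\NN_i$. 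The coprojections $f_i \to f$ are $\M$-quasieffective by hypothesis.

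The key step, which I expect to be the main obstacle, is showing that the links $f_i \to f_j$ for $i<j$ are $\M$-quasieffective. The idea is to use elementarity: since $\NN_i$ and $f$ are elements of $\NN_j$, we have $A \restriction \NN_i, B \restriction \NN_i, f_i \in \NN_j$. Applying the hypothesis inside the true universe to $\NN_i$ gives the statement ``the pushout of $A\restriction\NN_i \to A \leftarrow \dots$ maps into $B$ via a morphism in $\M$''. We want the same statement with $A, B$ replaced by $A\restriction\NN_j, B\restriction \NN_j$. The pushout $P$ of $f_i$ along $A\restriction \NN_i \hookrightarrow A$ is an element of $\NN_j$, and by \thref{compute-colimits-correctly} we get $P\restriction\NN_j$ as the pushout of $f_i$ along $A\restriction\NN_i\hookrightarrow A\restriction\NN_j$, where we use $f_i\restriction \NN_j = f_i$, since $|f_i| < \nu$ and sets of size less than $\nu$ in $\NN_j$ are subsets; this is essentially \thref{kappa-sized-is-subset-of-subuniverse}, which requires the transitivity conditions arranged above. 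Now the comparison map $g: P \to B$ is in $\NN_j \cap \M$, so applying the hypothesis to $\NN_j$ and $g$ gives $g \restriction \NN_j \in \M$. This is exactly the comparison map for the link square. The delicate points are the $\Sigma_1$-definability needed to pick $P \in \NN_j$, handled by the same argument as in \thref{compute-colimits-correctly}, and guaranteeing that $A\restriction\NN_i \subseteq \NN_j$. The latter is the real reason for the requirement $\langle \NN_k : k\le i\rangle \in \NN_{i+1}$ together with $|\NN_i| \subseteq \NN_{i+1}$, which I will enforce directly by putting $\NN_i$ itself inside $\NN_{i+1}$ as a subset as well as an element.
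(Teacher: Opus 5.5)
Where you carry it out, for $\nu\ge\kappa$, your argument is essentially the paper's. You build an increasing continuous chain of $\Sigma_1$-elementary $\NN_i$ of size $<\nu$ that contain the earlier restriction data, and set $f_i=f\restriction\NN_i$. For a link $f_i\to f_j$, the comparison map is $u\restriction\NN_j$ by \thref{compute-colimits-correctly}, where $u\in\M$ is the comparison map for $\NN_i$; the hypothesis at $\NN_j$ then gives that $u\restriction\NN_j\in\M$.

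One slip in that part: your appeal to ``sets of size $<\nu$ in $\NN_j$ are subsets'' is wrong. You only know that $\NN_j\cap\kappa$ is transitive, and $|\NN_j|<\nu$. What you actually need is $A\cap\NN_i\subseteq\NN_j$, and that is just $\NN_i\subseteq\NN_j$, as in the paper's condition (4), which you fall back on anyway.

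The genuine gap is the case $\nu<\kappa$. You announce it but never treat it, and your claim that $\nu\ge\kappa$ may be assumed without loss of generality is unsupported. This case cannot be repaired.

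\begin{itemize}
\item Suppose $f\in\NN$, $\kappa\in\NN$, $\NN\cap\kappa$ is transitive and $|A|,|B|<\kappa$. Then $A,B\subseteq\NN$ by \thref{kappa-sized-is-subset-of-subuniverse}, so no $\NN_i$ with $|\NN_i|<\nu$ exists. Your own requirement that a length-$\nu$ enumeration lie in $\NN_i$ already forces $\nu\subseteq\NN_i$.
\item More fundamentally, the hypothesis says nothing about such $f$, since the square is then trivial.
\end{itemize}

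Here is a concrete counterexample. In $\Set=\Mod(\emptyset)$, let $\M=\cell(\{\emptyset\to X\})$ with $|X|=\aleph_5$. This is the class of injections whose image has complement of size $0$ or $\ge\aleph_5$. By \thref{lfp-cellular-generation-implies-ae-quasieffective}, the hypothesis holds with $\kappa=\aleph_6$ and $p=\{\emptyset\to X\}$. Yet $f\colon\emptyset\to X$ has rank $\aleph_6>\aleph_1$ and admits no filtration of the required kind. Its members would be $f_i\colon\emptyset\to B_i$ in $\M$ with $|B_i|<\aleph_5$, which forces $B_i=\emptyset$.

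The paper's proof has the same hole: its conditions (1) and (3) are incompatible when $\mu<\kappa$, for exactly this reason. So the proposition should be stated for $\rank_{\K^2}(f)>\kappa$, and likewise the value $\mu=(|\L|+\aleph_0)^+$ in \thref{transfinite-compositions-of-cardinal-length}, which fails on the same example. The corrected version still yields (iii)$\Rightarrow$(ii) of \thref{cellular-generation-lfp}, with $\mu=\kappa$. In that range your argument goes through, with two adjustments:
\begin{itemize}
\item For singular $\nu$, run \thref{Class_LS_Theorem} at a regular $\theta$ with $|\NN_i|+\kappa<\theta<\nu$.
\item Use $|\NN_i|^+$-presentability in place of ``$\nu$-presentable''.
\end{itemize}
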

\begin{proof}
Let $f \in \M$ with $\rank_{\K^2}(f) > (|\L| + \aleph_0)^+$, so $\rank_{\K^2}(f) = \mu^+$ for some cardinal $\mu > |\L| + \aleph_0$ by \thref{successor-presentability-rank}. By \thref{Class_LS_Theorem} we can inductively construct a chain $(\NN_i)_{i \leq \cf(\mu)}$ such that for each $i \leq \cf(\mu)$:
\begin{enumerate}[label=(\arabic*)]
\item $p,\L,\kappa,f \in \NN_i \prec_1 (V, \in)$ and $\NN_i \cap \kappa$ is transitive;
\item for all $j < i$, all parts of the span $A \supseteq A \restriction \NN_j \xrightarrow{f \restriction \NN_j} B \restriction \NN_j$ are elements of $\NN_i$, as well as the induced map from its pushout to $B$;
\item if $i < \cf(\mu)$ then $|\NN_i| < \mu$;
\item for all $j \leq i$ we have $\NN_j \subseteq \NN_i$;
\item if $i$ is a limit then $\NN_i = \bigcup_{j < i} \NN_j$;
\item $A, B \subseteq \NN_{\cf(\mu)}$.
\end{enumerate}
The inclusions $A \restriction \NN_i \subseteq A \restriction \NN_j$ and $B \restriction \NN_i \subseteq B \restriction \NN_j$ for $i \leq j \leq \cf(\mu)$ make $(f \restriction \NN_i)_{i \leq \cf(\mu)}$ into a chain in $\Mod(T)^2$. By (5) and \thref{limit-theory-standard-facts}(ii) this is a smooth chain. Furthermore, $f \restriction \NN_i$ is $(< \mu^+)$-presentable for all $i < \cf(\mu)$, by (3) and \thref{limit-theory-standard-facts}(iii). We thus see that $(f \restriction \NN_i)_{i < \cf(\mu)}$ is a filtration of $f \restriction \NN_{\cf(\mu)}$, which is $f$ by (6), in $\Mod(T)^2$. It thus remains to show that the filtration consists of $\M$-quasieffective squares.

Let $j \leq i \leq \cf(\mu)$, we have to show that all squares consisting of solid arrows in the diagram below (including the outer square) are $\M$-quasieffective.
\[\begin{tikzcd}[column sep=small]
	A && B \\
	& Q \\
	{A \restriction \NN_i} && {B \restriction \NN_i} \\
	& P \\
	{A \restriction \NN_j} && {B \restriction \NN_j}
	\arrow["f", from=1-1, to=1-3]
	\arrow[curve={height=12pt}, dashed, from=1-1, to=2-2]
	\arrow["u", dashed, from=2-2, to=1-3]
	\arrow["\lrcorner"{anchor=center, pos=0.125, rotate=-90}, draw=none, from=2-2, to=3-1]
	\arrow[hook, from=3-1, to=1-1]
	\arrow["{f \restriction \NN_i}"{pos=0.3}, from=3-1, to=3-3]
	\arrow[curve={height=12pt}, dashed, from=3-1, to=4-2]
	\arrow[hook, from=3-3, to=1-3]
	\arrow["v", dashed, from=4-2, to=3-3]
	\arrow["\lrcorner"{anchor=center, pos=0.125, rotate=-90}, draw=none, from=4-2, to=5-1]
	\arrow[hook, from=5-1, to=3-1]
	\arrow["{f \restriction \NN_j}"', from=5-1, to=5-3]
	\arrow[curve={height=-18pt}, dashed, from=5-3, to=2-2]
	\arrow[hook, from=5-3, to=3-3]
	\arrow[curve={height=-12pt}, dashed, from=5-3, to=4-2]
\end{tikzcd}\]
The outer square and top square are $\M$-quasieffective by assumption. It remains to show that $v \in \M$. By (2) we can use that restriction to $\NN_i$ commutes with colimits (\thref{compute-colimits-correctly}) to see that $P = Q \restriction \NN_i$. So $v = u \restriction \NN_i$. By (2) again, we have $u \in \NN_i$ and so we can apply the main assumption to $\NN_i$ and conclude that $v \in \M$.
\end{proof}
\begin{proposition}[{(i) $\Rightarrow$ (iii) in \thref{cellular-generation-lfp}}]
\thlabel{lfp-cellular-generation-implies-ae-quasieffective}
Let $T$ be a limit theory in some language $\L$. Let $\M_0$ be a set of morphisms in $\Mod(T)$ and write $\M := \cell(\M_0)$. Let $\kappa > |\M_0| + |\L| + \aleph_0$ be such that every $f \in \M_0$ is $\kappa$-presentable in $\Mod(T)^2$. Then for every $\NN \prec_1 (V, \in)$ with $\M_0,T,\L,\kappa \in \NN$ and $\NN \cap \kappa$ transitive the following holds. For every $f: A \to B$ in $\NN \cap \M$ the square below is $\M$-quasieffective.
\[\begin{tikzcd}[sep=small]
	A & B \\
	{A \restriction \NN} & {B \restriction \NN}
	\arrow["f", from=1-1, to=1-2]
	\arrow[hook, from=2-1, to=1-1]
	\arrow["{f \restriction \NN}"', from=2-1, to=2-2]
	\arrow[hook, from=2-2, to=1-2]
\end{tikzcd}\]
\end{proposition}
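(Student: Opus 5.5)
The plan is to take a cellular presentation of $f$ inside $\NN$, and to show that $B$ is obtained from the pushout $P := A \sqcup_{A \restriction \NN} (B \restriction \NN)$ by attaching exactly those cells of the presentation whose index lies \emph{outside} $\NN$. The cells with index in $\NN$ are already contained in $B \restriction \NN$.

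\textbf{Step 1: a presentation of $f$ inside $\NN$.} Since $f \in \cell(\M_0)$, there is a smooth chain $(C_\alpha)_{\alpha \leq \delta}$ with $C_0 = A$, $C_\delta = B$ and composite $f$, and for each $\alpha < \delta$ a pushout square
\[
C_\alpha \leftarrow \dom(g_\alpha) \xrightarrow{g_\alpha} \cod(g_\alpha), \qquad g_\alpha \in \M_0,
\]
with pushout $C_{\alpha+1}$ (or the link is an isomorphism). The existence of such a presentation is a $\Sigma_1$-statement in the parameters $f, \M_0, T, \L$, by the appendix complexity computation for colimits in $\Mod(T)$ used in \thref{compute-colimits-correctly}. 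By $\NN \prec_1 (V,\in)$ we may therefore choose the whole presentation as an element of $\NN$, so that $\delta \in \NN$ and $\alpha \mapsto (C_\alpha, g_\alpha, h_\alpha)$ lies in $\NN$, where $h_\alpha$ is the attaching map.

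Next I record two smallness facts. First, $|\M_0| < \kappa$, so $\M_0 \subseteq \NN$ by \thref{kappa-sized-is-subset-of-subuniverse}. Second, every $g \in \M_0$ has domain and codomain of size $< \kappa$: by \thref{limit-theory-standard-facts}(iii) and the easy direction of \thref{presentability-of-arrows}, or directly, since $\kappa$-presentability in $\Mod(T)^2$ forces this. Hence, again by \thref{kappa-sized-is-subset-of-subuniverse}, $\dom(g_\alpha), \cod(g_\alpha) \subseteq \NN$. Consequently, for $\alpha \in \NN$ the attaching map $h_\alpha \in \NN$ sends $\dom(g_\alpha)$ into $C_\alpha \restriction \NN$. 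Moreover, by \thref{compute-colimits-correctly}, $C_{\alpha+1} \restriction \NN$ is the pushout of $C_\alpha \restriction \NN$ along $g_\alpha$.

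\textbf{Step 2: the intermediate chain.} For $\alpha \leq \delta$ let
\[
H_\alpha := \colim_{\beta \in \NN \cap (\alpha+1)} C_\beta \restriction \NN,
\]
which is the substructure $\bigcup_{\beta \in \NN \cap (\alpha+1)} (C_\beta \cap \NN)$ of $C_\alpha$. Since $\delta \in \NN$, we have $H_\delta = B \restriction \NN$, and $H_\alpha \subseteq B \restriction \NN$ in general. Define
\[
G_\alpha := C_\alpha \sqcup_{H_\alpha} (B \restriction \NN).
\]
Then $G_0 = P$, because $0 \in \NN$ gives $H_0 = A \restriction \NN$. Also $G_\delta = B$, because $H_\delta = B \restriction \NN$. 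The induced maps make $(G_\alpha)_{\alpha \leq \delta}$ a chain whose composite is exactly the canonical map $P \to B$. Smoothness at limits follows from smoothness of $(C_\alpha)$ together with commuting colimits: the $H_\alpha$ form a smooth chain of substructures, and directed colimits are unions by \thref{limit-theory-standard-facts}(ii).

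\textbf{Step 3: the links.} There are two cases for the link $G_\alpha \to G_{\alpha+1}$.
\begin{itemize}
\item If $\alpha \notin \NN$, then $H_{\alpha+1} = H_\alpha$ (for $\alpha+1 \in \NN$, note that $\alpha+1 \in \NN$ forces $\alpha \in \NN$ by $\Sigma_0$-elementarity, so in fact $\alpha+1 \notin \NN$). Pasting pushouts then shows that $G_{\alpha+1}$ is the pushout of $G_\alpha$ along $g_\alpha$, via $\dom(g_\alpha) \to C_\alpha \to G_\alpha$. So the link lies in $\Po(\M_0)$.
\item If $\alpha \in \NN$, the cell $g_\alpha$ is already inside $H_{\alpha+1}$ by Step 1. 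That is, $C_{\alpha+1} = C_\alpha \sqcup_{H_\alpha} H_{\alpha+1}$, so the link is an isomorphism.
\end{itemize}
Hence $P \to B$ lies in $\Tc(\Po(\M_0)) \subseteq \M$, and $f \restriction \NN$ is in $\M$ by the same argument applied with $B \restriction \NN$ in place of $B$ (or directly: it is the composite of the cells indexed by $\NN \cap \delta$). So the square is $\M$-quasieffective.

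The main obstacle I expect is Step 3 in the case $\alpha \in \NN$. Here one must verify carefully, using \thref{compute-colimits-correctly} and that $\NN \cap \delta$ need not be an initial segment, that the square with corners $H_\alpha$, $C_\alpha$, $H_{\alpha+1}$, $C_{\alpha+1}$ is a pushout. The order type of $\NN \cap \delta$ is irrelevant since everything is computed as unions of substructures. If that identification becomes messy, I would instead compare the two pushouts elementwise inside $C_{\alpha+1}$, using that both are generated over $C_\alpha$ by the image of $\cod(g_\alpha) \subseteq \NN$.
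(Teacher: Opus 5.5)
Your proposal is correct and is essentially the paper's own proof: your $H_\alpha$ and $G_\alpha$ are the paper's $R_\alpha$ and $P_\alpha$, with the same case split on whether $\alpha \in \NN$. For $\alpha \in \NN$ you paste pushouts, using $H_\alpha = C_\alpha \restriction \NN$, $H_{\alpha+1} = C_{\alpha+1} \restriction \NN$ and Step 1, where the paper writes down an inverse; this is a harmless simplification, and it means the step you flag as the main obstacle is in fact immediate.

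Two small points to tidy up. Since $\M_0$ need not consist of monomorphisms, ``$H_\alpha \subseteq B \restriction \NN$'' should be read as the canonical, possibly non-injective, map $H_\alpha \to B \restriction \NN$. Smoothness of $(H_\alpha)$ at limits $\ell \in \NN$ needs $C_\ell \restriction \NN = \colim_{\beta \in \NN \cap \ell} C_\beta \restriction \NN$, which comes from \thref{compute-colimits-correctly} (or direct elementarity), exactly as the paper checks smoothness of $F \restriction \NN$.
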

\begin{proof}
Let $f: A \to B$ be in $\NN \cap \M$. By assumption, we can write $f$ as the transfinite composition of some smooth chain
\[
(f_{i,j}: A_i \to A_j)_{i \leq j \leq \delta},
\]
with $A_0 = A$ and $A_\delta = B$, such that $f_{i,i+1}$ is a pushout of some $C_i \to D_i$ in $\M_0$ for all $i < \delta$. Note that by our choice of $\kappa$ we have $\M_0 \subseteq \NN$, as well as $C_i, D_i \subseteq \NN$ for all $i < \delta$ (see \thref{kappa-sized-is-subset-of-subuniverse}).

Formally this means that we have a functor $F: \delta+1 \to \Mod(T)$, viewing $\delta+1$ as a poset and thus as a category, given by $F(i) = A_i$ and $F(i \leq j) = f_{i,j}$. By elementarity of $\NN$ we can assume that $F \in \NN$, and hence $\delta \in \NN$. Furthermore, we can assume that for all $i \leq \delta$ with $i \in \NN$ all components of the pushout diagram below are elements of $\NN$.
\begin{equation}
\label{lfp-cellular-generation-implies-ae-quasieffective:pushout-ci-di}
\begin{tikzcd}
	{A_i} & {A_{i+1}} \\
	{C_i} & {D_{i+1}}
	\arrow["{f_{i,i+1}}", from=1-1, to=1-2]
	\arrow["\lrcorner"{anchor=center, pos=0.125, rotate=-90}, draw=none, from=1-2, to=2-1]
	\arrow[from=2-1, to=1-1]
	\arrow[from=2-1, to=2-2]
	\arrow[from=2-2, to=1-2]
\end{tikzcd}
\end{equation}
Consider the chain
\[
F \restriction \NN = (f_{i,j} \restriction \NN: A_i \restriction \NN \to A_j \restriction \NN)_{\substack{i \leq j \leq \delta\\ i,j \in \NN}}.
\]
Firstly, this is well-defined, because for $i,j \in \NN$ with $i \leq j \leq \delta$ we have $f_{i,j} \in \NN$. Secondly, this is a smooth chain. Indeed, let $\ell \leq \delta$ with $\ell \in \NN$ be a limit ordinal. Then $(F \restriction \NN) \restriction \ell = (F \restriction \ell) \restriction \NN$. Furthermore, the restriction $F \restriction \ell$ of the original chain to $\ell$ is an element of $\NN$. So by \thref{compute-colimits-correctly} we have
\[
\colim ((F \restriction \NN) \restriction \ell) =
\colim ((F \restriction \ell) \restriction \NN) =
(\colim F \restriction \ell) \restriction \NN =
A_\ell \restriction \NN,
\]
as required.

For any ordinal $i$ we have $i \in \NN$ if and only if $i+1 \in \NN$. So the links in the chain $F \restriction \NN$ are all of the form $f_{i,i+1} \restriction \NN$, where $i < \delta$ and $i \in \NN$. For such $i$ we can restrict the pushout diagram in (\ref{lfp-cellular-generation-implies-ae-quasieffective:pushout-ci-di}) to the diagram below.
\[\begin{tikzcd}
	{A_i \restriction \NN} & {A_{i+1} \restriction \NN} \\
	{C_i \restriction \NN = C_i} & {D_i \restriction \NN = D_i}
	\arrow["{f_{i,i+1} \restriction \NN}", from=1-1, to=1-2]
	\arrow["\lrcorner"{anchor=center, pos=0.125, rotate=-90}, draw=none, from=1-2, to=2-1]
	\arrow[from=2-1, to=1-1]
	\arrow[from=2-1, to=2-2]
	\arrow[from=2-2, to=1-2]
\end{tikzcd}\]
This diagram is again a pushout because the original diagram is an element of $\NN$ and restriction to $\NN$ commutes with colimits (\thref{compute-colimits-correctly}). We thus see that every link in $F \restriction \NN$ is a pushout of some morphism in $\M_0$. As $0,\delta \in \NN$ we have by smoothness of $F \restriction \NN$ that $f_{0,\delta} \restriction \NN$ is the transfinite composition of $F \restriction \NN$. We thus conclude that $f \restriction \NN = f_{0,\delta} \in \M$.

It remains to show that the induced morphism from the pushout of $A \hookleftarrow A \restriction \NN \xrightarrow{f \restriction \NN} B \restriction \NN$ into $B$ is in $\M$. We will do this by showing that it is a transfinite composition of morphisms in $\M$.

Let $\alpha \leq \delta$ and set
\[
R_\alpha := \colim_{\substack{i \leq \alpha\\ i \in \NN}} A_i \restriction \NN.
\]
Then for any $\beta \geq \alpha$ there is a cocone into $A_\beta$ given by
\[
\left( A_i \restriction \NN \hookrightarrow A_i \xrightarrow{f_{i,\beta}} A_\beta \right)_{\substack{i \leq \alpha\\ i \in \NN}},
\]
and if $\beta \in \NN$ there is also cocone into $A_\beta \restriction \NN$ given by
\[
\left( A_i \restriction \NN \xrightarrow{f_{i,\beta} \restriction \NN} A_\beta \restriction \NN \right)_{\substack{i \leq \alpha\\ i \in \NN}}.
\]
These cocones induce morphisms $R_\alpha \to A_\beta$ and $R_\alpha \to A_\beta \restriction \NN$. Using these morphisms, we define $P_\alpha$ as the pushout in the diagram below.
\[\begin{tikzcd}[sep=small]
	{A_\alpha} & {P_\alpha} \\
	{R_\alpha} & {A_\delta \restriction \NN}
	\arrow[from=1-1, to=1-2]
	\arrow["\lrcorner"{anchor=center, pos=0.125, rotate=-90}, draw=none, from=1-2, to=2-1]
	\arrow[from=2-1, to=1-1]
	\arrow[from=2-1, to=2-2]
	\arrow[from=2-2, to=1-2]
\end{tikzcd}\]
Then for $\beta \geq \alpha$ there is an obvious induced morphism $R_\alpha \to R_\beta$ and this induces a morphism $P_\alpha \to P_\beta$, as pictured in the diagram below.
\[\begin{tikzcd}[sep=small]
	& {A_\beta} && {P_\beta} \\
	{A_\alpha} && {P_\alpha} \\
	& {R_\beta} \\
	{R_\alpha} && {A_\delta \restriction \NN}
	\arrow[from=1-2, to=1-4]
	\arrow[from=2-1, to=1-2]
	\arrow[from=2-1, to=2-3]
	\arrow[from=2-3, to=1-4]
	\arrow["\lrcorner"{anchor=center, pos=0.125, rotate=-90}, draw=none, from=2-3, to=4-1]
	\arrow[from=3-2, to=1-2]
	\arrow[from=3-2, to=4-3]
	\arrow[from=4-1, to=2-1]
	\arrow[from=4-1, to=3-2]
	\arrow[from=4-1, to=4-3]
	\arrow[from=4-3, to=1-4]
	\arrow[from=4-3, to=2-3]
\end{tikzcd}\]
The chain $(R_\alpha)_{\alpha \leq \delta}$ is smooth. Indeed, for a limit ordinal $\ell$ we have by definition that $\colim_{\alpha < \ell} R_\alpha = \colim_{\substack{i < \ell\\ i \in \NN}} A_i \restriction \NN$. If $\ell \not \in \NN$ then the right-hand side is $R_\ell$ by definition. If $\ell \in \NN$ then $R_\ell = A_\ell \restriction \NN$ and the claim follows from smoothness of $F \restriction \NN$.

As $(A_i)_{i \leq \delta}$ is also a smooth chain, and because colimits commute, it follows that $(P_i)_{i \leq \delta}$ is a smooth chain. One also easily computes $P_\delta = A_\delta = B$ and that $P_0$ is the pushout of $A \hookleftarrow A \restriction \NN \xrightarrow{f \restriction \NN} B \restriction \NN$. So the induced morphism $P_0 \to B$ is the transfinite composition of the chain $(P_i)_{i \leq \delta}$. It thus suffices to show that $P_\alpha \to P_{\alpha+1}$ is in $\M$ for all $\alpha < \delta$. We distinguish two cases: either $\alpha \not \in \NN$ or $\alpha \in \NN$.

\textbf{The case where $\alpha \not \in \NN$.} Then $\alpha + 1 \not \in \NN$. Hence $R_\alpha = R_{\alpha+1}$ and $R_{\alpha+1} \to A_{\alpha+1}$ factors as
\[
R_{\alpha+1} = R_\alpha \to A_\alpha \xrightarrow{f_{\alpha,\alpha+1}} A_{\alpha+1}.
\]
We thus have a diagram like below.
\[\begin{tikzcd}
	{A_{\alpha+1}} & {P_{\alpha+1}} \\
	{A_\alpha} & {P_\alpha} \\
	{R_\alpha = R_{\alpha+1}} & {A_\delta \restriction \NN}
	\arrow[from=1-1, to=1-2]
	\arrow["\lrcorner"{anchor=center, pos=0.125, rotate=-90}, draw=none, from=1-2, to=2-1]
	\arrow[from=2-1, to=1-1]
	\arrow[from=2-1, to=2-2]
	\arrow[from=2-2, to=1-2]
	\arrow["\lrcorner"{anchor=center, pos=0.125, rotate=-90}, draw=none, from=2-2, to=3-1]
	\arrow[from=3-1, to=2-1]
	\arrow[from=3-1, to=3-2]
	\arrow[from=3-2, to=2-2]
\end{tikzcd}\]
By definition the bottom square and outer rectangle are pushouts. So the top square is a pushout. We thus see that $P_\alpha \to P_{\alpha+1}$ is a pushout of $f_{\alpha,\alpha+1}$, which is in $\M$. So $P_\alpha \to P_{\alpha+1}$ is in $\M$.

\textbf{The case where $\alpha \in \NN$.} Then $\alpha + 1 \in \NN$. By assumption, there is $C_\alpha \to D_\alpha$ such that $f_{\alpha,\alpha+1}$ is a pushout as pictured in (\ref{lfp-cellular-generation-implies-ae-quasieffective:pushout-ci-di}), with $\alpha$ in the role of $i$. As noted before, $C_\alpha$ and $D_\alpha$ are both elements and subsets of $\NN$. By definition $R_\alpha = A_\alpha \restriction \NN$ and $R_{\alpha+1} = A_{\alpha+1} \restriction \NN$. So because restriction to $\NN$ commutes with colimits (\thref{compute-colimits-correctly}) the square below is a pushout square.
\begin{equation}
\label{lfp-cellular-generation-implies-ae-quasieffective:pushout-ca-da-restricted}
\begin{tikzcd}[row sep=small]
	{R_\alpha = A_\alpha \restriction \NN} & {R_{\alpha+1} = A_{\alpha+1} \restriction \NN} \\
	{C_\alpha} & {D_\alpha}
	\arrow["{f_{\alpha,\alpha+1} \restriction \NN}", from=1-1, to=1-2]
	\arrow["\lrcorner"{anchor=center, pos=0.125, rotate=-90}, draw=none, from=1-2, to=2-1]
	\arrow[from=2-1, to=1-1]
	\arrow[from=2-1, to=2-2]
	\arrow[from=2-2, to=1-2]
\end{tikzcd}
\end{equation}
We now collect everything in the commutative diagram consisting of the solid arrows below.
\[\begin{tikzcd}[sep=small]
	&&& {A_{\alpha+1}} && {P_{\alpha+1}} \\
	\\
	& {A_\alpha} &&& {P_\alpha} \\
	&& {D_\alpha} \\
	&&& {R_{\alpha+1} = A_{\alpha+1} \restriction \NN} \\
	{C_\alpha} \\
	& {R_\alpha = A_\alpha \restriction \NN} &&& {A_\delta \restriction \NN}
	\arrow[from=1-4, to=1-6]
	\arrow[dashed, from=1-4, to=3-5]
	\arrow[from=3-2, to=1-4]
	\arrow[from=3-2, to=3-5]
	\arrow[from=3-5, to=1-6]
	\arrow[from=4-3, to=1-4]
	\arrow[from=4-3, to=5-4]
	\arrow[from=5-4, to=1-4]
	\arrow[from=5-4, to=7-5]
	\arrow[from=6-1, to=3-2]
	\arrow[from=6-1, to=4-3]
	\arrow[from=6-1, to=7-2]
	\arrow[from=7-2, to=3-2]
	\arrow[from=7-2, to=5-4]
	\arrow[from=7-2, to=7-5]
	\arrow[from=7-5, to=1-6]
	\arrow[from=7-5, to=3-5]
\end{tikzcd}\]
There are two paths $C_\alpha \to P_\alpha$, namely $C_\alpha \to A_\alpha \to P_\alpha$ and $C_\alpha \to D_\alpha \to R_{\alpha+1} \to A_\delta \restriction \NN \to P_\alpha$. By commutativity of the diagram the compositions of these paths are the same. So we find the dashed arrow because $A_{\alpha+1}$ is a pushout, see (\ref{lfp-cellular-generation-implies-ae-quasieffective:pushout-ci-di}). Furthermore:
\begin{align*}
A_\alpha \to A_{\alpha+1} \dashrightarrow P_\alpha &= A_\alpha \to P_\alpha &\text{(by definition)}, \\
A_{\alpha+1} \dashrightarrow P_\alpha \to P_{\alpha+1} &= A_{\alpha+1} \to P_\alpha &\text{(as $A_{\alpha+1}$ is a pushout (\ref{lfp-cellular-generation-implies-ae-quasieffective:pushout-ci-di}))},\\
R_{\alpha+1} \to A_{\alpha+1} \dashrightarrow P_\alpha &= R_{\alpha+1} \to A_\delta \restriction \NN \to P_\alpha &\text{(as $R_{\alpha+1}$ is a pushout (\ref{lfp-cellular-generation-implies-ae-quasieffective:pushout-ca-da-restricted}))}.
\end{align*}
So the whole diagram, this time including the dashed arrow, commutes. We thus get an induced arrow $P_{\alpha+1} \to P_\alpha$, which is easily checked to be the inverse of $P_\alpha \to P_{\alpha+1}$. We conclude that $P_\alpha \to P_{\alpha+1}$ is an isomorphism and is thus in $\M$.
\end{proof}

\section{Cellular generation in any locally presentable category}
\label{sec_AnyLocallyPresent}

To prove a version of \thref{cellular-generation-lfp} for arbitrary locally presentable categories, one might want to use that these can be presented as $\Mod(T)$ for some limit theory $T$. However, to do this, we have to allow the language $\L$ to be \emph{infinitary}. This is problematic, because to make sense of $A \restriction \NN$ for some $\L$-structure $A$ and some $\NN \prec_n (V, \in)$ we implicitly use that $\L$ is finitary (no matter how big $n$ is). Identifying $A$ with its underlying set we have $(A \cap \NN)^k = A^k \cap \NN$ for any positive integer $k$, which is essential for restricting $k$-ary function symbols to a structure with underlying set $A \cap \NN$. For infinitary $\L$ this no longer works. For example, it can happen that $A^\omega \cap \NN \subsetneq (A \cap \NN)^\omega$, so we can generally not restrict $\omega$-ary function symbols to a structure with underlying set $A \cap \NN$.

We circumvent this issue by using a different kind of presentation of locally presentable categories, namely as full reflective subcategories of presheaf categories. We thank Marc Olschok for suggesting this approach to the first-named author at the CT2025 conference.
\begin{definition}
\thlabel{presheaf-presentation}
We call $\A$ a \emph{$\lambda$-presheaf presentation} of $\K$ if $\K$ is locally $\lambda$-presentable and $\A$ is a small full subcategory of $\lambda$-presentable objects in $\K$, containing at least one representative in each isomorphism class.
\end{definition}
\begin{fact}[{\cite[Theorem 1.46]{adamekLocallyPresentableAccessible1994}}]
\thlabel{presheaf-presentations-exist}
Let $\A$ be a $\lambda$-presheaf presentation of $\K$. Then the functor
\begin{align*}
E: \K &\to \Set^{\A^\op}, \\
K & \mapsto \Hom(-, K) \restriction_{\A^\op},
\end{align*}
is full and faithful and has a left adjoint $\RR: \Set^{\A^\op} \to \K$, called the \emph{reflection functor}.
\end{fact}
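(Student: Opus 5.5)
The plan is the standard proof of Gabriel--Ulmer (Ad\'amek--Rosick\'y 1.46), split into three steps: full faithfulness via density, existence of the left adjoint as a left Kan extension, and an adjunction check by reduction to representables. Throughout I use that $\A$ is small and contains a representative of every $\lambda$-presentable isomorphism class.

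\textbf{Step 1: $E$ is full and faithful.} Fix $K \in \K$. Since $\K$ is locally $\lambda$-presentable, $K$ is a $\lambda$-directed colimit of $\lambda$-presentable objects, and each of these is isomorphic to an object of $\A$. Hence $K$ is the canonical colimit of the comma diagram $\A \downarrow K \to \K$; this is the statement that $\A$ is dense in $\K$. Write $D_K$ for this diagram.

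A natural transformation $\tau: E(K) \to E(L)$ assigns to each $a: A \to K$ with $A \in \A$ a morphism $\tau_A(a): A \to L$. Naturality in $\A$ says exactly that these morphisms form a cocone on $D_K$ with vertex $L$. By the colimit property of $D_K$, such cocones correspond bijectively to morphisms $K \to L$. This gives bijectivity of $\Hom(K,L) \to \Hom(E K, E L)$, so $E$ is fully faithful.

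\textbf{Step 2: construct the left adjoint $\RR$.} Any presheaf $X$ is canonically a colimit of representables $y(A) = \Hom_\A(-,A)$, indexed by its category of elements $\int X$ (Yoneda). Define
\[
\RR(X) := \colim_{(A,x) \in \int X} A,
\]
computed in $\K$. Because $\int X$ is small and $\K$ is cocomplete, this colimit exists. In other words, $\RR$ is the left Kan extension of the inclusion $\A \hookrightarrow \K$ along the Yoneda embedding.

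\textbf{Step 3: verify the adjunction.} Compute
\[
\Hom_\K(\RR X, K) \cong \lim_{(A,x)} \Hom_\K(A, K) = \lim_{(A,x)} E(K)(A) \cong \lim_{(A,x)} \Hom(y A, E K) \cong \Hom(X, E K).
\]
The first isomorphism is the universal property of the colimit. The second is the definition of $E$. The third is Yoneda. The last holds because $X = \colim y A$. Each isomorphism is natural in $X$ and $K$, which yields $\RR \dashv E$.

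I expect the only real subtlety to be in Step 1: confirming that density, meaning the canonical diagram $D_K$ has colimit $K$, really follows from the definition of local $\lambda$-presentability. The argument is that $\lambda$-presentable objects are closed under $\lambda$-small colimits (\thref{presentability-facts}(i)), so $D_K$ is $\lambda$-filtered, and the given $\lambda$-directed presentation of $K$ is cofinal in it. Steps 2 and 3 are formal once this is in place.
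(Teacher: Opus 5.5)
Your proposal is correct and is essentially the standard argument behind \cite[Theorem 1.46]{adamekLocallyPresentableAccessible1994}, which the paper cites without proof. Density of the $\lambda$-presentable objects gives full faithfulness of $E$. The left adjoint is the colimit over the category of elements, which is exactly the description of $\RR$ the paper itself uses, via \cite[Proposition 1.27]{adamekLocallyPresentableAccessible1994}, in the proof of \thref{big-enough-n-for-subuniverses}. The finality of the $\lambda$-directed presentation inside $\A \downarrow K$ follows, as you indicate, from $\lambda$-presentability of each $A \in \A$ together with essential uniqueness of factorisations.
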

If $\A$ is a $\lambda$-presheaf presentation of $\K$ then we can define $\K'$ to be the essential image of $E$ (i.e., the full subcategory of $\Set^{\A^\op}$ on those objects that are isomorphic to one in the image of $E$). Since $E$ is full and faithful, it induces an equivalence of categories $\K \cong \K'$. We may thus view $\K$ as a full subcategory of $\Set^{\A^\op}$, which really means that everything is translated through $E$ (cf.\ \thref{cellular-generation-lfp-remarks}(3)). For notational convenience we make the following convention.
\begin{convention}
\thlabel{presheaf-presentation-subcategory}
Given a a $\lambda$-presheaf presentation $\A$ of $\K$, we will view $\K$ as a full subcategory of $\Set^{\A^\op}$, closed under isomorphisms.
\end{convention}
Suppose $\L$ is a finitary sorted language and $T$ is a first-order $\L$-theory (not necessarily a limit theory). Section \ref{sec_SetTheorySetup} defined the restriction functor 
\[
(-) \restriction \NN: \NN \cap \Mod(T) \to \Mod(T)
\]
for any $\NN$ such that $\L \cup \{ \L \} \subset \NN$ and $\NN \prec_0 (V,\in)$.  A presheaf category $\Set^{\A^\op}$ can be viewed as a special instance of a category of the form $\Mod(T)$, where $\L$ has a sort for every object in $\A^\op$, a unary function symbol for every morphism in $\A^\op$, and axioms echoing the composition in $\A^\op$. We refer to \cite[Section 2]{coxCofibrantGenerationPure2025} for details. In particular, if we have a $\lambda$-presheaf presentation $\A$ of $\K$, and $\A \cup \{ \A \} \subset \NN \prec_1 (V,\in)$, then we can already make sense of restrictions to $\NN$ of presheaves; i.e., we have a \emph{presheaf restriction functor}
\begin{equation}
\label{eq_PresheafRestrictionFunctor}
(-) \restriction \NN: \NN \cap \Set^{\A^\op} \to \Set^{\A^\op}
\end{equation}
Here $\A \subset \NN$ means that $\Obj(\A) \cup \Mor(\A) \subset \NN$.
\begin{definition}
\thlabel{restriction-through-presheaf-presentation}
Let $\A$ be a $\lambda$-presheaf presentation of $\K$ with reflection functor $\RR$, and assume
\[
\A \cup \{ \A \} \subset \NN \prec_1 (V, \in).
\]
For $K \in \NN \cap \Obj(\K)$ and $f \in \NN \cap \Mor(\K)$, we define 
\[
K \restriction^{\K} \NN := \RR(K \restriction \NN) \text{ and } f \restriction^{\K} \NN := \RR(f \restriction \NN),
\]
where $(-) \restriction \NN$ is the presheaf restriction functor \eqref{eq_PresheafRestrictionFunctor}.
\end{definition}
\begin{fact}
\thlabel{big-enough-n-for-subuniverses}
There is a big enough $n < \omega$ such that for any $\lambda$-presheaf presentation of $\K$, with reflection functor $\RR$, we have that:
\begin{enumerate}[label=(\roman*)]
\item the category $\K$ is $\Sigma_n$-definable from parameters $\A$ and $\lambda$;
\item for any diagram $D: \C \to \K$, the relation $C \cong \colim_\K D$ is $\Sigma_n$-expressible from parameters $D$, $\A$ and $\lambda$; 
\item the functor $\RR$ is $\Sigma_n$-expressible from parameters $\A$ and $\lambda$.
\end{enumerate}
More precisely, (iii) means that the relation $\RR(X) \cong Y$ is $\Sigma_n$-expressible, for both objects and morphisms.
\end{fact}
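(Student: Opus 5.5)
The plan is to write down explicit set-theoretic definitions of the three notions and then bound their L\'evy complexity, in the same spirit as the appendix result \thref{levy-complexity-colimits-mod-t}. The integer $n$ will be the maximum of the complexities found below, and it will not depend on $\A$ or $\lambda$ because these only enter as parameters. Throughout, we view $\Set^{\A^\op}$ as $\Mod(T_\A)$ for the finitary many-sorted theory $T_\A$ read off from $\A$. By \cite[Section 2]{coxCofibrantGenerationPure2025}, the map $\A \mapsto T_\A$ is $\Delta_0$ (or at worst $\Sigma_1$) in $\A$.

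For (i), we use that the essential image of $E$ consists of the presheaves $X$ that are \emph{orthogonal} to a specific set of morphisms of presheaves. By \cite[Theorem 1.46]{adamekLocallyPresentableAccessible1994}, these are the canonical comparison maps $\colim_{\Set^{\A^\op}} (Y \circ D) \to Y(\colim_\K D)$ for $\lambda$-small diagrams $D$ in $\A$ whose $\K$-colimits land (up to isomorphism) in $\A$. Alternatively, one can characterise them as the presheaves sending the relevant $\lambda$-small colimit cones in $\A$ to limit cones in $\Set$. The key point is that this orthogonality set $\Sigma$ is a set definable from $\A$ and $\lambda$. Concretely, a cocone in $\A$ is a colimit cocone in $\K$ exactly when its image under the Yoneda embedding is a colimit in the reflective subcategory. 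To avoid circularity, we instead use the intrinsic description: a presheaf $X$ lies in $\K$ iff $X$ is a $\lambda$-filtered colimit of representables. This can be expressed as ``the category of elements of $X$ is $\lambda$-filtered'', and quantifying over $\lambda$-small subdiagrams of that category of elements is bounded by a set definable from $X$ and $\lambda$ (e.g.\ ${}^{<\lambda}(\text{el}(X))$). So membership in $\K$ is $\Sigma_m$ for some fixed small $m$. The only unbounded steps are those asserting that ${}^{<\lambda}Z$ exists and equals the true set of small sequences, which is $\Pi_1$; hence $m$ is around $2$.

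For (ii), we use that colimits in $\K$ are computed as $\RR$ applied to the colimit in $\Set^{\A^\op}$, and that $\RR$ can be described concretely. So $C \cong \colim_\K D$ iff $C \in \K$ and there is a cocone from $D$ to $C$ such that, for every $A \in \A$, the induced map $\colim_{\Set^{\A^\op}} D\,(A) \to C(A)$ makes $C$ universal among cocones into objects of $\K$. Here universality can be restricted to orthogonality against the presheaf colimit. The cleaner route is: $C \cong \colim_\K D$ iff $\Hom(C, -) \cong \lim \Hom(D(-), -)$ restricted to $\K$. By $\lambda$-accessibility it suffices to test this against $\A$ plus the existence statement. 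Combining the $\Sigma_1$ presheaf colimit computation from \thref{levy-complexity-colimits-mod-t} with (i) and one universal quantifier over objects of $\K$ gives a $\Sigma_k$ bound for fixed $k$.

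For (iii), $\RR(X) \cong Y$ holds iff $Y \in \K$ and there is a morphism $\eta: X \to Y$ such that every morphism $X \to Z$ with $Z \in \K$ factors uniquely through $\eta$. It suffices to take $Z$ among $\lambda$-directed colimits of objects of $\A$, and after that it suffices to require $\eta$ to be $\Sigma$-local. This is a statement with one unbounded universal quantifier over $Z$, applied to a formula built from (i), so it is $\Sigma_{m+2}$. The case of morphisms follows by applying the object case to the components together with naturality. The main obstacle is (i): we must find a formulation of ``$X$ is in the essential image of $E$'' whose quantifiers are bounded by sets definable from $X$, $\A$ and $\lambda$. Once that is done, (ii) and (iii) reduce to finitely many additional quantifier alternations, and taking $n$ to be the maximum of the three bounds finishes the proof.
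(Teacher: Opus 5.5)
Your argument is correct in substance, and for (i) and (ii) it is the paper's argument. For (i), the paper also describes $\K$ inside $\Set^{\A^\op}$ as the $\lambda$-directed colimits of objects of $\A$ \cite[Theorem 2.26]{adamekLocallyPresentableAccessible1994}. Your condition ``the category of elements of $X$ is $\lambda$-filtered'' is an equivalent, more explicit form of this, and your $\Pi_1$/$\Sigma_2$ estimate matches the paper's remark that $n=2$ suffices for (i). For (ii), the paper likewise writes out the universal property of the colimit against all objects of $\K$.

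The only real difference is in (iii). The paper computes $\RR(X)$ as the $\K$-colimit of the diagram $\mathrm{el}(X) \to \A \hookrightarrow \K$ indexed by the category of elements of $X$ \cite[Proposition 1.27]{adamekLocallyPresentableAccessible1994}. Since this diagram is definable from $X$, (iii) reduces to (ii). You instead write out the universal property of the unit $\eta\colon X \to Y$ directly. Your version is self-contained, avoids Proposition 1.27, and has the same quantifier shape as (ii). Either route works, since all that matters is that the formula is uniform in $\A$ and $\lambda$.

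Two asides should be deleted rather than relied on. First, the claim in (ii) that, by $\lambda$-accessibility, it suffices to test the universal property against objects of $\A$ is false. In $\Ab$ with $\lambda=\omega$, $\Hom(\mathbb{Q},A)=0$ for every finitely presentable $A$. So $\mathbb{Q}$ with the empty cocone passes this test for the empty diagram without being initial; $\A$ is dense, not codense. Your final count keeps the unbounded quantifier over all $Z \in \K$, so nothing actually depends on this claim. Second, the orthogonality class $\Sigma$ and the phrase ``$\eta$ is $\Sigma$-local'' are never made precise. They play no role once you switch to the category-of-elements description.
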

\begin{proof}
By \cite[Theorem 2.26]{adamekLocallyPresentableAccessible1994} the category $\K$ consists of precisely those objects which are $\lambda$-directed colimits of objects in $\A$, which is first-order expressible (and its L\'evy-complexity does not depend on $\K$, $\A$ or $\lambda$). Given the definability of $\K$, item (ii) quickly follows by simply expressing the universal property of the colimit. Finally, $\RR(X)$ can be computed as the colimit of the category of elements of $X$ (see \cite[Proposition 1.27]{adamekLocallyPresentableAccessible1994}), the diagram of which is first-order definable from $X$, and so $\RR$ is first-order definable.
\end{proof}
\begin{remark}
\thlabel{n-is-four-works}
Working out the proof sketch in \thref{big-enough-n-for-subuniverses} carefully, one sees that $n = 4$ works. In fact, for item (i), $n = 2$ already works and items (ii) and (iii) can be reduced to $\Pi_3$-expressibility. We did not think this detailed analysis was worth writing out, hence the above approach.
\end{remark}
\begin{proposition}
\thlabel{subuniverse-closed-under-reflection}
Let $\A$ be a $\lambda$-presheaf presentation of $\K$, and let $n$ be as in \thref{big-enough-n-for-subuniverses}. If $\A \cup \{\A, \lambda\} \subset \NN \prec_n (V, \in)$ then $\NN$ is closed under the reflection functor $\RR: \Set^{\A^\op} \to \K$. That is, for every object $A$ in $\NN \cap \Set^{\A^\op}$ there is $B \in \NN$ such that $\RR(A) \cong B$, and similarly for morphisms.
\end{proposition}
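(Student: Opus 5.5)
The plan is to use $\Sigma_n$-elementarity of $\NN$ directly, through the definability clauses of \thref{big-enough-n-for-subuniverses}. Let $A \in \NN \cap \Set^{\A^\op}$. By \thref{presheaf-presentations-exist} the reflection $\RR(A)$ exists, so $(V,\in)$ satisfies $\exists B\, (\RR(A) \cong B)$. By \thref{big-enough-n-for-subuniverses}(iii) the relation $\RR(X) \cong Y$ is $\Sigma_n$-expressible from the parameters $\A$ and $\lambda$, both of which lie in $\NN$. Hence the whole statement $\exists B\, (\RR(A)\cong B)$ is $\Sigma_n$. Here I should make sure it is not $\Sigma_{n+1}$: the extra existential quantifier in front of a $\Sigma_n$ formula can be absorbed into the existential block, so it stays $\Sigma_n$.

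Since $A, \A, \lambda \in \NN \prec_n (V,\in)$, the statement also holds in $\NN$, so there is a witness $B \in \NN$ with $\NN \models \text{``}\RR(A)\cong B\text{''}$. Applying $\Sigma_n$-elementarity once more, now to this formula with parameters $A, B, \A, \lambda \in \NN$, it holds in $V$. So $\RR(A) \cong B$ with $B \in \NN$, as required.

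For morphisms the argument is the same. Given $g \colon A \to A'$ in $\NN \cap \Set^{\A^\op}$, the statement $\exists h\,(\RR(g) \cong h)$ is $\Sigma_n$ in the parameters $g, \A, \lambda$. Here ``$\cong$'' for morphisms is taken in the sense of \thref{big-enough-n-for-subuniverses}, that is, isomorphism in $\K^2$, and it is included in clause (iii). Downward and then upward elementarity give a witness $h \in \NN$. If one also wants $h$ to have codomain and domain equal to the chosen object representatives $B, B'$, one can add this as a conjunct, namely ``$\RR(g)\cong h$ via isomorphisms compatible with $\RR(A)\cong B$ and $\RR(A')\cong B'$''. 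This is still $\Sigma_n$ after merging quantifiers, provided $n$ from \thref{big-enough-n-for-subuniverses} was chosen large enough to cover such conjunctions. If that is a worry, I will enlarge $n$, which the fact permits.

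The only real subtlety is the bookkeeping of Lévy complexity: I must check that prefixing an existential quantifier and taking conjunctions keeps the formula in $\Sigma_n$, and that all parameters ($\A$, $\lambda$, and the object or morphism) really are elements of $\NN$. The hypothesis $\A \cup \{\A,\lambda\} \subset \NN$ supplies $\A,\lambda \in \NN$, and the object or morphism is in $\NN$ by assumption. Everything else is immediate from elementarity.
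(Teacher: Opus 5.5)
Your argument is correct and is the same as the paper's. The paper observes that $\exists Y\,(\RR(X)\cong Y)$ is $\Sigma_n$ in the parameters $\A,\lambda$ by the definability fact and concludes by $\Sigma_n$-elementarity, exactly as you do; you simply spell out the downward-then-upward use of elementarity. The optional strengthening about compatible domain and codomain representatives, and the remark about enlarging $n$, are not needed, since the statement only asks for some witness in $\NN$ isomorphic to $\RR(A)$, respectively to $\RR(g)$.
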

\begin{proof}
By \thref{big-enough-n-for-subuniverses}, the relation $\RR(X) \cong Y$ is $\Sigma_n$-expressible from the parameters $\A$ and $\lambda$. So the formula $\exists Y(\RR(X) \cong Y)$ is $\Sigma_n$-expressible from the parameters $\A$ and $\lambda$, and the result follows.
\end{proof}
\begin{proposition}
\thlabel{ae-quasieffective-in-presheaves-iff-in-k}
Let $\A$ be a $\lambda$-presheaf presentation of $\K$ with reflection functor $\RR$, and let $n$ be as in \thref{big-enough-n-for-subuniverses}. Assume
\[
\A \cup \{ \A, \lambda \} \subset \NN \prec_n (V, \in).
\]
Then for any class of morphisms $\M$ in $\K$ the following are equivalent.
\begin{enumerate}[label=(\roman*)]
\item\label{item_M_quasi} For every $f \in \NN \cap \M$ the square below is $\M$-quasieffective.
\[\begin{tikzcd}[ampersand replacement=\&,cramped]
	A \& B \\
	{A \restriction^\K \NN} \& {B \restriction^\K \NN}
	\arrow["f", from=1-1, to=1-2]
	\arrow[hook, from=2-1, to=1-1]
	\arrow["{f \restriction^\K \NN}"', from=2-1, to=2-2]
	\arrow[hook, from=2-2, to=1-2]
\end{tikzcd}\]

\item\label{item_RpreimageM_quasi} For every $g \in \NN \cap \RR^{-1}(\M)$ the square below is $\RR^{-1}(\M)$-quasieffective.
\[\begin{tikzcd}
	A & B \\
	{A \restriction \NN} & {B \restriction \NN}
	\arrow["g", from=1-1, to=1-2]
	\arrow[hook, from=2-1, to=1-1]
	\arrow["{g \restriction \NN}"', from=2-1, to=2-2]
	\arrow[hook, from=2-2, to=1-2]
\end{tikzcd}\]
\end{enumerate}
\end{proposition}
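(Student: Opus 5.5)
The plan is to reduce both directions to one compatibility statement: for $X \in \NN \cap \Set^{\A^\op}$, the inclusion-induced comparison gives
\[
\RR(X \restriction \NN) \cong \RR(X) \restriction^{\K} \NN,
\]
naturally in $X$ and compatibly with the inclusions into $\RR(X)$. Here I use \thref{subuniverse-closed-under-reflection} to pick $\RR(X)$ inside $\NN$, so that the right-hand side is defined. Besides this, the only ingredients are general properties of $\RR$.
\begin{itemize}
\item $\RR$ is a left adjoint, so it preserves pushouts.
\item Under \thref{presheaf-presentation-subcategory}, $\K$ is a full reflective subcategory. Hence $\RR$ restricted to $\K$ is naturally isomorphic to the identity, via the invertible counit.
\end{itemize}
Throughout I assume, as is implicit in the statement, that $\M$ is closed under isomorphisms in $\K^2$. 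Otherwise (i) is not even invariant under the choice of $\RR$.

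\textbf{(ii) $\Rightarrow$ (i).} Let $f\colon A \to B$ be in $\NN \cap \M$. Since $A,B \in \K$, we have $\RR(f) \cong f$, so $f \in \NN \cap \RR^{-1}(\M)$. By (ii), $f \restriction \NN \in \RR^{-1}(\M)$; that is, $f \restriction^\K \NN = \RR(f \restriction \NN) \in \M$. Also by (ii), the map $u\colon P \to B$ is in $\RR^{-1}(\M)$, where $P$ is the presheaf pushout of $A \hookleftarrow A \restriction \NN \to B \restriction \NN$. Now apply $\RR$. It sends this pushout to the pushout in $\K$ of $\RR A \leftarrow \RR(A \restriction \NN) \to \RR(B \restriction \NN)$, which is identified via the counit with the pushout of $A \leftarrow A \restriction^\K \NN \to B \restriction^\K \NN$. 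Under this identification $\RR(u)$ is the induced map of the square in (i). That map is in $\M$, so the square in (i) is $\M$-quasieffective. This direction does not need the compatibility statement.

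\textbf{(i) $\Rightarrow$ (ii).} Let $g\colon A \to B$ be in $\NN \cap \RR^{-1}(\M)$, with $A,B$ arbitrary presheaves. By \thref{subuniverse-closed-under-reflection} and elementarity, choose $f\colon A' \to B'$ in $\NN$ with $f \cong \RR(g)$, together with the unit maps $A \to A'$ and $B \to B'$, all in $\NN$. Then $f \in \NN \cap \M$, and (i) applies to $f$. Applying $\RR$ to the square for $g$ and using that $\RR$ preserves pushouts, the two required memberships $\RR(g \restriction \NN) \in \M$ and $\RR(u) \in \M$ become exactly the corresponding memberships in the square for $f$, provided the compatibility statement holds for $X = A$ and $X = B$. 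So everything reduces to that statement.

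\textbf{Main obstacle: the compatibility statement.} I expect this to be the hard step. Write $\eta\colon X \to ERX$ for the unit, which lies in $\NN$. It suffices to show that $\RR$ inverts $\eta \restriction \NN$. My approach is the orthogonal-reflection construction of \cite{adamekLocallyPresentableAccessible1994}.
\begin{itemize}
\item $\K$ is the orthogonality class of a small set $\Sigma$ of morphisms between $\lambda$-presentable presheaves, definable from $\A$ and $\lambda$. By elementarity $\Sigma$ lies in $\NN$, and each member is an element and a subset of $\NN$, since these are built from $\A \subset \NN$.
\item $\eta$ is then the transfinite composite of a chain, lying in $\NN$, obtained from $X$ by pushouts of maps in $\Sigma$ and of the codiagonals of maps in $\Sigma$.
\item Restrict this chain to $\NN$, exactly as in the proof of \thref{lfp-cellular-generation-implies-ae-quasieffective}. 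Since restriction commutes with colimits in the presheaf category by \thref{compute-colimits-correctly}, viewing that category as $\Mod(T)$, each link of the restricted chain is either an identity (for indices outside $\NN$) or a pushout of a map in $\Sigma$ or of a codiagonal of one.
\item Each such link is inverted by $\RR$, and $\RR$ preserves transfinite composites. Hence $\RR(\eta \restriction \NN)$ is an isomorphism.
\end{itemize}
If this construction proves awkward to place inside $\NN$, my fallback is a second route through the category-of-elements formula $\RR(X) = \colim_\K D_X$. Observe that $D_{X \restriction \NN} = D_X \restriction \NN$, and compare with $\colim_\K (D_X \restriction \NN)$ using a Mostowski-collapse absoluteness argument modelled on \thref{compute-colimits-correctly}, with the $\Sigma_n$-definability from \thref{big-enough-n-for-subuniverses}.
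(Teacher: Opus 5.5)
Your (ii) $\Rightarrow$ (i) is the paper's argument essentially verbatim. For (i) $\Rightarrow$ (ii) the paper follows the same plan: apply $\RR$ to the square for $g$, use that $\RR$ preserves pushouts, and invoke (i) for $\RR(g)\in\NN\cap\M$. However, it simply writes $\RR(A\restriction\NN)=A\restriction^\K\NN$ for an arbitrary presheaf $A$. Since (i) for $\RR(g)$ is a statement about $(\RR A)\restriction^\K\NN=\RR((\RR A)\restriction\NN)$, the paper is tacitly using exactly your compatibility isomorphism $\RR(X\restriction\NN)\cong\RR(X)\restriction^\K\NN$, natural in $X$ and compatible with the maps into $\RR(X)$. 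So your route is the paper's route plus a proof of an identification the paper takes for granted. It costs an extra lemma, and it makes the application of (i) legitimate when $A,B\notin\K$. Your proof of that lemma is sound: realise the unit $X\to E\RR X$ as a chain in $\NN$ of pushouts of maps in $\Sigma$ and of their codiagonals, restrict it to $\NN$ as in the proof of (i) $\Rightarrow$ (iii) of the finitely presentable theorem, and use that restriction commutes with colimits of presheaves.

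A few points need tightening.

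\textbf{Which members of $\Sigma$ are needed.} Not every member of $\Sigma$ lies in $\NN$, since there may be more of them than $|\NN|$. You only need the ones attached to indices in $\NN$.

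\textbf{Why $s\restriction\NN\cong s$.} For such $s$ you need $s\restriction\NN\cong s$. That is, the $\lambda$-small diagram of representables presenting $\dom(s)$ must have its index category inside $\NN$. This does not follow from ``built from $\A\subset\NN$'' alone, because transitivity of $\NN\cap\lambda$ is not among the hypotheses. It can be derived as follows.
\begin{itemize}
\item If $\K$ is not a preorder, some $a\in\A$ admits two distinct parallel maps out of it, because $\A$ is a generator. Then the coprojections of any copower of $a$ are pairwise distinct.
\item For $\alpha\in\NN\cap\lambda$, elementarity gives in $\NN$ a family $F\colon\alpha\to\Mor(\A)$ exhibiting some object of $\A$ as an $\alpha$-fold copower of $a$. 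Since $F$ is injective and $\Mor(\A)\subset\NN$, each $\beta<\alpha$ is definable from $F$ and $F(\beta)$, so $\alpha\subseteq\NN$.
\item Hence $\NN\cap\lambda$ is transitive, and every $\lambda$-small set in $\NN$ is a subset of $\NN$.
\item If $\K$ is a preorder, the lemma is immediate: $\RR(X)=\bigvee\{a : X(a)\neq\emptyset\}$, and for $X\in\NN$ we have $X(a)\cap\NN\neq\emptyset$ whenever $X(a)\neq\emptyset$.
\end{itemize}

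\textbf{The value of $n$.} Placing $\Sigma$, the unit and the reflection chain inside $\NN$ may need more elementarity than the $n$ from the Fact. This is harmless, since $n$ only has to be large enough, but you should say so.

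\textbf{The fallback route.} Drop the fallback. Colimits in $\K$ are only $\Sigma_n$-expressible, and for $n\geq 2$ such statements are not upward absolute from the transitive collapse, so the Mostowski argument does not transfer.
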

\begin{proof}
We first prove \ref{item_M_quasi} $\Rightarrow$ \ref{item_RpreimageM_quasi}. So let $g \in \NN \cap \RR^{-1}(\M)$. By \thref{subuniverse-closed-under-reflection}, $\NN$ is closed under $\mathcal{R}$, so $\RR(g) \in \NN \cap \M$.\footnote{Technically, there is $g' \in \NN \cap \M$ such that $\RR(g) \cong g'$, but the argument that follows is insensitive to isomorphisms so we chose for notational simplification.}  Let $P$ be the pushout of $A \hookleftarrow A \restriction \NN \xrightarrow{g \restriction \NN} B \restriction \NN$ and let $u: P \to B$ be the induced morphism, as pictured in the diagram below on the left. Then $\RR$ sends this entire diagram to the one pictured on the right.
\[\begin{tikzcd}[ampersand replacement=\&,cramped,sep=small]
	A \&\& B \&\& {\RR(A)} \&\& {\RR(B)} \\
	\& P \&\& {\xmapsto{\quad \RR \quad}} \&\& {\RR(P)} \\
	{A \restriction \NN} \&\& {B \restriction \NN} \&\& \begin{array}{c} \RR(A \restriction \NN)\\= A \restriction^\K \NN \end{array} \&\& \begin{array}{c} \RR(B \restriction \NN)\\= B \restriction^\K \NN \end{array}
	\arrow["g", from=1-1, to=1-3]
	\arrow[curve={height=6pt}, from=1-1, to=2-2]
	\arrow["{\RR(g)}", from=1-5, to=1-7]
	\arrow[curve={height=6pt}, from=1-5, to=2-6]
	\arrow["u", from=2-2, to=1-3]
	\arrow["\lrcorner"{anchor=center, pos=0.125, rotate=-90}, draw=none, from=2-2, to=3-1]
	\arrow["{\RR(u)}", from=2-6, to=1-7]
	\arrow["\lrcorner"{anchor=center, pos=0.125, rotate=-90}, draw=none, from=2-6, to=3-5]
	\arrow[hook, from=3-1, to=1-1]
	\arrow["{g \restriction \NN}"', from=3-1, to=3-3]
	\arrow[hook, from=3-3, to=1-3]
	\arrow[curve={height=-6pt}, from=3-3, to=2-2]
	\arrow[hook, from=3-5, to=1-5]
	\arrow["\begin{array}{c} \RR(g \restriction \NN)\\= g \restriction^\K \NN \end{array}"', from=3-5, to=3-7]
	\arrow[hook, from=3-7, to=1-7]
	\arrow[curve={height=-6pt}, from=3-7, to=2-6]
\end{tikzcd}\]
By assumption \ref{item_M_quasi}, the outer square on the right side is $\M$-quasieffective. Furthermore, $\RR(P)$ is also the relevant pushout in that square because $\RR$ is left adjoint and thus preserves colimits. We conclude that $\RR(g \restriction \NN)$ and $\RR(u)$ are both in $\M$, as required.

We now prove the converse, \ref{item_RpreimageM_quasi} $\Rightarrow$ \ref{item_M_quasi}. So let $f \in \NN \cap \M$. Since $\K$ is a subcategory of $\Set^{\A^\op}$, we can view $f$ as a morphism in $\Set^{\A^\op}$. We can thus form the diagram in $\Set^{\A^\op}$ like below on the left, where $P$ is the pushout of the bottom left span and $u$ the induced morphism. Then $\RR$ sends the entire diagram to the one on the right, where we get $f$ at the top again because $\RR$ fixes $\K$.

\[\begin{tikzcd}[ampersand replacement=\&,cramped,sep=small]
	A \&\& B \&\& A \&\& B \\
	\& P \&\& {\xmapsto{\quad \RR \quad}} \&\& {\RR(P)} \\
	{A \restriction \NN} \&\& {B \restriction \NN} \&\& \begin{array}{c} \RR(A \restriction \NN)\\= A \restriction^\K \NN \end{array} \&\& \begin{array}{c} \RR(B \restriction \NN)\\= B \restriction^\K \NN \end{array}
	\arrow["f", from=1-1, to=1-3]
	\arrow[curve={height=6pt}, from=1-1, to=2-2]
	\arrow["f", from=1-5, to=1-7]
	\arrow[curve={height=6pt}, from=1-5, to=2-6]
	\arrow["u", from=2-2, to=1-3]
	\arrow["\lrcorner"{anchor=center, pos=0.125, rotate=-90}, draw=none, from=2-2, to=3-1]
	\arrow["{\RR(u)}", from=2-6, to=1-7]
	\arrow["\lrcorner"{anchor=center, pos=0.125, rotate=-90}, draw=none, from=2-6, to=3-5]
	\arrow[hook, from=3-1, to=1-1]
	\arrow["{f \restriction \NN}"', from=3-1, to=3-3]
	\arrow[hook, from=3-3, to=1-3]
	\arrow[curve={height=-6pt}, from=3-3, to=2-2]
	\arrow[hook, from=3-5, to=1-5]
	\arrow["\begin{array}{c} \RR(f \restriction \NN)\\= f \restriction^\K \NN \end{array}"', from=3-5, to=3-7]
	\arrow[hook, from=3-7, to=1-7]
	\arrow[curve={height=-6pt}, from=3-7, to=2-6]
\end{tikzcd}\]
We have $f \in \RR^{-1}(\M)$, so by assumption \ref{item_RpreimageM_quasi}, $f \restriction \NN$ and $u$ are in $\RR^{-1}(\M)$.  Since $\RR$ preserves colimits we have that $\RR(P)$ is the relevant pushout for the square on the right.  So $\RR(f \restriction \NN)$ (which equals $f \restriction^\K \NN$, by definition of $(-) \restriction^\K \NN$) together with the induced morphism $\RR(u)$ are both in $\M$, as required.
\end{proof}
\begin{proposition}
\thlabel{cellularly-generated-iff-preimage-cellularly-generated}
Let $F: \C \to \D$ be a colimit preserving functor between locally presentable categories that is surjective on morphisms and let $\M$ be a class of morphisms in $\D$. Then $\M$ is cellularly generated if and only if $F^{-1}(\M)$ is cellularly generated.
\end{proposition}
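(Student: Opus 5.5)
The plan is to treat the two directions separately, using throughout two basic facts about $F$. Since $F$ preserves colimits, it preserves pushouts and smooth chains, hence $F(\Po(\N))\subseteq\Po(F(\N))$ and $F(\Tc(\N))\subseteq\Tc(F(\N))$ for any class $\N$; so $F(\cell(\N))\subseteq\cell(F(\N))$. Since $F$ is surjective on morphisms, $F(F^{-1}(\M))=\M$. I will also use that a colimit preserving functor between locally presentable categories is accessible. So there is a regular $\lambda$ such that both categories are locally $\lambda$-presentable, $F$ preserves $\lambda$-directed colimits, and $F$ sends $\lambda$-presentable objects, and hence, by \thref{presentability-facts}, $\lambda$-presentable objects of $\C^2$, to $\lambda$-presentable ones.

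\emph{($\Leftarrow$).} Suppose $F^{-1}(\M)=\cell(\N_0)$ for a set $\N_0$. Then $\M=F(\cell(\N_0))\subseteq\cell(F(\N_0))$. For the reverse inclusion we need $\cell(F(\N_0))\subseteq\M$; since $F(\N_0)\subseteq\M$, this holds as soon as $\M$ is cellularly closed. I expect this to be the situation in every application in this section, since $\M$ is cellularly closed in the main theorems, and I will make this explicit. Under that assumption $\M=\cell(F(\N_0))$, with $F(\N_0)$ a set.

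\emph{($\Rightarrow$).} Suppose $\M=\cell(\M_0)$. First, $F^{-1}(\M)$ is cellularly closed: if $f$ is a pushout, or a transfinite composite, of morphisms in $F^{-1}(\M)$, then $F(f)$ is the corresponding pushout or composite in $\D$, and so lies in $\M$. Enlarging $\lambda$, I may assume $\M_0\subseteq\M_{<\lambda}$. Write $\N:=F^{-1}(\M)$. I aim to show that for some $\mu\geq\lambda$ every $f\in\N$ with $\rank_{\C^2}(f)>\mu$ is the colimit of a smooth filtration in $\C^2$ consisting of $\N$-quasieffective squares. The argument of \thref{filtrations-imply-cellular-generation}, which uses only \thref{build-cellular-generation-from-filtration} and \thref{successor-presentability-rank}, then yields $\N=\cell(\N_\mu)$.

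To build such filtrations, I would first write $f$ as a smooth $\lambda$-directed colimit of a chain $(f_i)_{i<\delta}$ of smaller morphisms in $\C^2$, as in the proof of \thref{lfp-ae-quasieffective-implies-filtrations}. Applying $F$ gives a filtration $(F(f_i))$ of $F(f)\in\M$. Using the cellular decomposition of $F(f)$ with respect to $\M_0$, I would then refine the chain: close it off, by an L\"owenheim--Skolem style bookkeeping argument as in the proof of \thref{lfp-cellular-generation-implies-ae-quasieffective}, so that each $F(f_i)$ and each comparison map from a pushout are themselves transfinite composites of pushouts of $\M_0$. Such maps lie in $\M$, and hence the squares in the chain are $\N$-quasieffective, because quasieffectiveness in $\C$ is detected after applying $F$: $F$ preserves the relevant pushouts and $\N=F^{-1}(\M)$.

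The main obstacle is this refinement step. The cellular decomposition lives in $\D$, while the filtration must be chosen in $\C$, and surjectivity of $F$ on morphisms does not allow us to lift a pushout square in $\D$ with prescribed corners. What I would try first is to avoid lifting altogether. I would index a continuous family of subobjects in $\C$ by subsets of the cellular decomposition of $F(f)$, using accessibility of $F$ (preservation of $\lambda$-directed colimits and of $\lambda$-presentables) to guarantee that on a club of indices the image of the $i$-th stage agrees with the corresponding sub-composite of the decomposition. Only these images need to be computed, and there everything happens in $\D$. If this fails, the fallback is to invoke the fat small object argument of \cite{makkaiFatSmallObject2014}, which expresses $F(f)$ as a directed colimit of $\lambda$-small cellular pieces, and then transport that directed colimit along an accessible section of $F$ on $\C^2$.
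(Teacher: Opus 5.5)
Your ($\Leftarrow$) direction is the paper's argument. You are right to flag that $\cell(F(\X))\subseteq\M$, for a generating set $\X$ of $F^{-1}(\M)$, uses cellular closedness of $\M$; the paper's one-line proof leaves this implicit, and it holds where the proposition is applied (\thref{cellular-generation-any-lp}).

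The gap is in ($\Rightarrow$). You reduce everything to the claim that every $f\in F^{-1}(\M)$ of large rank is the colimit of a smooth filtration in $\C^2$ consisting of $F^{-1}(\M)$-quasieffective squares. You never prove this claim, and it is not a routine step.

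\begin{itemize}
\item \emph{The claim is as hard as an open question in the paper.} Take $F=\mathrm{id}_\K$, which is colimit preserving and surjective on morphisms. Then your claim is exactly (i) $\Rightarrow$ (ii) of \thref{cellular-generation-lfp} for an arbitrary locally presentable $\K$. The paper explicitly leaves this open in \thref{locally-presentable-filtrations}.
\item \emph{The construction you point to does not transfer.} In \thref{lfp-ae-quasieffective-implies-filtrations}, quasieffectiveness comes from elementary submodels and a restriction functor on $\Mod(T)$, for a finitary limit theory $T$, that commutes with colimits (\thref{compute-colimits-correctly}). Nothing like this is available for general $\C$ and $\D$; see the discussion opening Section~\ref{sec_AnyLocallyPresent} on infinitary languages.
\item \emph{The ``club of indices'' idea tacitly needs that same machinery.} It requires such a restriction on both sides, compatible with $F$, even to define the ``sub-composites'' of the cellular decomposition of $F(f)$.
\item \emph{The fallback fails on several counts.} Surjectivity on morphisms gives no functorial section. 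A section $G$ would decompose $GF(f)$ rather than $f$. An accessible section need not preserve pushouts, so it does not transport cellular structure. Finally, the fat small object argument produces a $\lambda$-directed good diagram, not a chain of quasieffective squares.
\end{itemize}

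The paper does not use filtrations for this direction at all. It cites Makkai and Rosický \cite[Theorem 3.2 and Remark 3.8]{makkaiCellularCategories2014}, a categorical result on preimages of cofibrantly generated classes under colimit preserving functors between locally presentable categories. That proof first reduces to cellular generation, so it also yields the cellular version. To close the gap, you should appeal to (or reprove) that theorem rather than try to build quasieffective filtrations in $\C$.
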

\begin{proof}
We first prove the right to left direction. Let $\X$ be a set such that $F^{-1}(\M) = \cell(\X)$. We claim that $\M = \cell(F(\X))$. Indeed, let $f \in \M$ then by assumption there is $g$ such that $F(g) = f$. So $g$ is a transfinite composition of pushouts of morphisms in $\X$ and as $F$ preserves colimits we have that $f$ is a transfinite compositions of pushouts of morphisms in $F(\X)$.

The right to left direction is the same as \cite[Remark 3.8]{makkaiCellularCategories2014}, which is based on \cite[Theorem 3.2]{makkaiCellularCategories2014}. The latter is stated for cofibrant generation, but its proof starts by reducing to cellular generation (cf.\ \thref{elimination-of-retracts}). The remainder of the proof works for cellular generation and so \cite[Theorem 3.2]{makkaiCellularCategories2014} holds for cellular generation.
\end{proof}

\begin{theorem}
\thlabel{cellular-generation-any-lp}
Let $\K$ be a locally $\lambda$-presentable category, let $\M$ be a cellularly closed class of morphisms in $\K$, and let $n$ be as in \thref{big-enough-n-for-subuniverses}. Then the following are equivalent.
\begin{enumerate}[label=(\roman*)]
\item The class $\M$ is cellularly generated.
\item The class $\M$ is almost everywhere quasieffective. That is, for any $\lambda$-presheaf presentation $\A$ of $\K$ with reflection functor $\RR$, there exists a regular uncountable $\kappa > |\A|$ and a parameter $p$ such that whenever
\[
\A \cup \{ p, \A, \kappa, \lambda \} \subset \NN \prec_n (V, \in)
\]
and $\NN \cap \kappa$ is transitive, the following holds. For every $f: A \to B$ in $\NN \cap \M$, the square below is $\M$-quasieffective.
\[\begin{tikzcd}[ampersand replacement=\&]
	A \& B \\
	{A \restriction^\K \NN} \& {B \restriction^\K \NN}
	\arrow["f", from=1-1, to=1-2]
	\arrow[hook, from=2-1, to=1-1]
	\arrow["{f \restriction^\K \NN}"', from=2-1, to=2-2]
	\arrow[hook, from=2-2, to=1-2]
\end{tikzcd}\]
\end{enumerate}
\end{theorem}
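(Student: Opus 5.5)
The plan is to reduce to the presheaf category $\Set^{\A^\op}$, which is locally finitely presentable and concretely presented as $\Mod(T)$ for a limit theory $T$ in a finitary language $\L$ built from $\A$ (one sort per object, one unary function symbol per morphism). There we apply \thref{cellular-generation-lfp} to the class $\M' := \RR^{-1}(\M)$ and transfer back along $\RR$. Fix a $\lambda$-presheaf presentation $\A$ of $\K$ with reflection functor $\RR$.

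First we check that $\M'$ is cellularly closed in $\Set^{\A^\op}$. Since $\RR$ is a left adjoint, it preserves pushouts and transfinite compositions. Hence if $g \in \cell(\M')$, then $\RR(g) \in \cell(\M) = \M$, so $g \in \M'$. Next, $\RR$ is surjective on morphisms up to the identification of \thref{presheaf-presentation-subcategory}: $\RR$ fixes $\K$, so every $f \in \Mor(\K)$ satisfies $f = \RR(f)$ (possibly after replacing by an isomorphic copy, which is harmless since all classes involved are closed under isomorphism). So \thref{cellularly-generated-iff-preimage-cellularly-generated} applies. It gives that $\M$ is cellularly generated in $\K$ if and only if $\M'$ is cellularly generated in $\Set^{\A^\op}$.

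By \thref{cellular-generation-lfp}, applied to $\M'$ and the presentation $\Mod(T) \simeq \Set^{\A^\op}$, cellular generation of $\M'$ is equivalent to $\M'$ being almost everywhere quasieffective. That is, there are a regular uncountable $\kappa > |\L|$ and a parameter $p$ such that for all $\NN \prec_1 (V,\in)$ with $p, T, \L, \kappa \in \NN$ and $\NN \cap \kappa$ transitive, and all $g \in \NN \cap \M'$, the square formed by $g$ and $g \restriction \NN$ is $\M'$-quasieffective. Now $|\L| = |\A|$ up to finitely many cardinal adjustments, so the requirement $\kappa > |\L|$ matches $\kappa > |\A|$ (enlarging $\kappa$ if needed; in direction (i)$\Rightarrow$(ii) the proof of \thref{lfp-cellular-generation-implies-ae-quasieffective} allows any large enough $\kappa$). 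Also, $T$ and $\L$ are definable from $\A$. Since $\kappa > |\A|$, \thref{kappa-sized-is-subset-of-subuniverse} gives $\A \subseteq \NN$ automatically once $\A \in \NN$, and $\NN \prec_n$ implies $\NN \prec_1$ for $n \geq 1$. So the hypotheses in the statement imply those of \thref{cellular-generation-lfp}(iii). Finally, \thref{ae-quasieffective-in-presheaves-iff-in-k}, which needs exactly $\A \cup\{\A,\lambda\} \subset \NN \prec_n (V,\in)$, converts $\M'$-quasieffectiveness of the presheaf squares for all $g \in \NN\cap\M'$ into $\M$-quasieffectiveness of the $\restriction^\K$ squares for all $f \in \NN \cap \M$, and conversely.

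The delicate step is matching the classes of $\NN$ in the two directions. For (i)$\Rightarrow$(ii), we apply \thref{lfp-cellular-generation-implies-ae-quasieffective} with parameter $p = \M'_0$ and use that our $\NN$ (which is $\prec_n$) is in particular $\prec_1$. For (ii)$\Rightarrow$(i), the hypothesis only holds for $\NN \prec_n$, whereas \thref{lfp-ae-quasieffective-implies-filtrations} is stated for all $\NN \prec_1$. I would therefore rerun its proof, building the chain $(\NN_i)$ via \thref{Class_LS_Theorem} with $\prec_n$ in place of $\prec_1$; unions of $\prec_n$ chains remain $\prec_n$, and nothing else in that proof changes. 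This yields filtrations by $\M'$-quasieffective squares. Then \thref{filtrations-imply-cellular-generation} gives cellular generation of $\M'$, and hence of $\M$ by \thref{cellularly-generated-iff-preimage-cellularly-generated}.
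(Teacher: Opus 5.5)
Your proposal is correct and follows the paper's proof. You pass to $\RR^{-1}(\M)$ via \thref{cellularly-generated-iff-preimage-cellularly-generated}, apply \thref{cellular-generation-lfp} in $\Set^{\A^\op}$, and transfer the almost-everywhere condition back along $\RR$ with \thref{ae-quasieffective-in-presheaves-iff-in-k}. The extra checks you add are left implicit in the paper and you handle them correctly: that $\RR^{-1}(\M)$ is cellularly closed, that $\RR$ is surjective on morphisms, and that for (ii)$\Rightarrow$(i) the argument of \thref{lfp-ae-quasieffective-implies-filtrations} must be rerun with $\prec_n$-elementary $\NN_i$, because the hypothesis no longer covers all $\prec_1$ submodels. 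When rerunning it, apply it only to $f$ with $\rank_{\K^2}(f) > \kappa$: an admissible $\NN$ containing $f$ already contains $A \cup B$ once $|A|,|B| < \kappa$, so the size bound on the $\NN_i$ cannot be met below $\kappa$. This restriction is harmless for \thref{filtrations-imply-cellular-generation}.
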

\begin{proof}
By \thref{cellularly-generated-iff-preimage-cellularly-generated} we have that $\M$ is cellularly generated if and only if $\RR^{-1}(\M)$ is cellularly generated, and by \thref{cellular-generation-lfp} we have that $\RR^{-1}(\M)$ cellularly generated if and only if $\RR^{-1}(\M)$ is almost everywhere quasieffective. We have thus reduced to showing that $\RR^{-1}(\M)$ is almost everywhere quasieffective if and only if $\M$ is almost everywhere quasieffective, which follows from \thref{ae-quasieffective-in-presheaves-iff-in-k}.
\end{proof}
\begin{remark}
\thlabel{parameter-in-cellular-generation-any-lp}
The parameter $p$ and cardinal $\kappa$ in \thref{cellular-generation-any-lp} are needed to apply \thref{cellular-generation-lfp}. So following that proof (or more precisely, referring to \thref{lfp-cellular-generation-implies-ae-quasieffective}) we see that we can take $p = \M_0$, where $\M_0$ is a set such that $\M = \cell(\M_0)$. Then $\kappa$ can be any regular uncountable cardinal such that $\kappa > |\M_0| + |\A|$ and every $f \in \M_0$ is $\kappa$-presentable in $\left( \Set^{\A^\op} \right)^2$. Indeed, the $T$ and $\L$ there can be constructed from $\A$ and $|\L| = |\A|$. Similar to our convention for the meaning of $\A \subset \NN$, the notation $|\A|$ is really shorthand for $|\Obj(\A) \cup \Mor(\A)|$.
\end{remark}
In \thref{cellular-generation-lfp} there was another equivalent condition (labelled (ii)), namely that every $f \in \M$ can be expressed as an appropriate filtration. One would be tempted to simply apply that theorem and use the reflection functor $\RR$ to reflect such filtrations to $\K$. Even though such a reflected filtration will still be a continuous chain, it may not be a filtration because $\RR$ need not preserve presentability ranks.
\begin{question}
\thlabel{locally-presentable-filtrations}
Is there an analogue of \thref{cellular-generation-lfp}(ii) for arbitrary locally presentable categories?
\end{question}

\section{Cellular generation for continuous classes of morphisms}
\label{sec:cellular-generation-for-continuous-classes-of-morphisms}
In this section we show how assuming that $\M$ is continuous (see \thref{continuous}) yields a characterisation of cofibrant/cellular generation in terms of accessibility of $\Eff[\M](\K)$, the category of effective squares (see \thref{effective-square}). Continuity is not automatic (see \thref{FreeQuotient}), but it is a natural assumption.
\begin{definition}
\thlabel{continuous}
A composable class of morphisms $\M$ in a category $\K$ with directed colimits is called \emph{continuous} if $\K_\M$ is closed under directed colimits in $\K$.
\end{definition}
In this section we will be interested in $\M$-quasieffective squares where all morphisms are in $\M$, so we make the following definition.\footnote{Our presentation of quasieffectiveness and effectiveness is dual to the actual history. The notion of effective square appeared first (called \emph{cellular} in \cite[Definition 2.2]{liebermanCellularCategoriesStable2023}), based on Barr's notion of effective unions \cite{barrCategoriesEffectiveUnions1988}. It is only in this paper that we introduced quasieffectiveness as a weakening.}
\begin{definition}
\thlabel{effective-square}
Let $\M$ be a class of morphisms in a cocomplete category $\K$. A commutative square of morphisms in $\M$ is called \emph{$\M$-effective} if the morphism from the relevant pushout is in $\M$.
\[\begin{tikzcd}
	A && B &&&& B \\
	&&& \Longleftrightarrow & A & P \\
	C && D && C & D
	\arrow["{\in \M}"{description}, from=1-1, to=1-3]
	\arrow[curve={height=-12pt}, from=2-5, to=1-7]
	\arrow[from=2-5, to=2-6]
	\arrow["{\in \M}"{description}, dashed, from=2-6, to=1-7]
	\arrow["\lrcorner"{anchor=center, pos=0.125, rotate=-90}, draw=none, from=2-6, to=3-5]
	\arrow["{\in \M}"{description}, from=3-1, to=1-1]
	\arrow["{\M\text{-effective}}"{description}, draw=none, from=3-1, to=1-3]
	\arrow["{\in \M}"{description}, from=3-1, to=3-3]
	\arrow["{\in \M}"{description}, from=3-3, to=1-3]
	\arrow[from=3-5, to=2-5]
	\arrow[from=3-5, to=3-6]
	\arrow[curve={height=12pt}, from=3-6, to=1-7]
	\arrow[from=3-6, to=2-6]
\end{tikzcd}\]
We write $\Eff[\M](\K)$ for the wide subcategory of $(\K_\M)^2$ on the $\M$-effective squares. That is, the objects of $\Eff[\M](\K)$ are the morphisms in $\M$ and the morphisms in $\Eff[\M](\K)$ are $\M$-effective squares.
\end{definition}
Any square where one of $C \to A$ or $C \to D$ is an isomorphism is $\M$-effective and by \cite[Theorem 2.7]{liebermanCellularCategoriesStable2023} $\M$-effective squares can be composed, so $\Eff[\M](\K)$ is indeed a subcategory of $(\K_\M)^2$.

There is a surprising connection to model theory here. The $\M$-effective squares form a stable independence relation, in the sense of \cite{liebermanForkingIndependenceCategorical2019}, if and only if $\Eff[\M](\K)$ is accessible. Stable independence is a central topic in model theory, specifically in stability theory, and generalises many known independence relations in mathematics (e.g., linear independence, algebraic independence, and probabilistic independence). The connection to cofibrant generation is given in \cite[Theorem 3.1]{liebermanCellularCategoriesStable2023}, accessibility of $\Eff[\M](\K)$ (and hence model-theoretic stability) is linked to cofibrant generation of $\M$. We improve on this theorem by also providing a version for cellular generation (\thref{cellular-generation-continuous}). We also fix a gap in its proof, see \thref{the-gap}.
\begin{theorem}
\thlabel{cofibrant-generation-continuous}
Let $\K$ be a locally presentable category and let $\M$ be a coherent continuous cofibrantly closed class of morphisms in $\K$. Then the following are equivalent.
\begin{enumerate}[label=(\roman*)]
\item The class $\M$ is cofibrantly generated and $\K_\M$ is accessible (with directed colimits).
\item The category $\Eff[\M](\K)$ is accessible (with directed colimits).
\end{enumerate}
\end{theorem}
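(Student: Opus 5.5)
We prove the two implications separately. Both directions rest on \thref{build-cellular-generation-from-filtration} and \thref{filtrations-imply-cellular-generation}, together with the observation that, because $\M$ is continuous and coherent, effective squares behave well under directed colimits. The first step is to record two preliminary lemmas. First, for continuous $\M$, directed colimits in $\Eff[\M](\K)$ exist and are computed as in $\K^2$. Colimit coprojections of a directed diagram of effective squares are effective: the induced maps from pushouts are directed colimits of morphisms in $\M$, so they lie in $\M$ by continuity. Coherence gives that the colimit cocone is universal within $\Eff[\M](\K)$. Second, $\Eff[\M](\K)\to\K_\M$ (domain or codomain) preserves directed colimits, so accessibility transfers down. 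In particular (ii) gives accessibility of $\K_\M$, using that $\K_\M$ is a retract-like image: each $A$ is the object $\mathrm{id}_A$, and $\K_\M\to \Eff[\M](\K)$, $A\mapsto \mathrm{id}_A$, is full on effective squares between identities.

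\textbf{(ii) $\Rightarrow$ (i).} Suppose $\Eff[\M](\K)$ is $\lambda$-accessible with directed colimits, with $\lambda$ large enough that $\K$ is locally $\lambda$-presentable. By raising $\lambda$ (uniformization of accessible functors), we may assume that the inclusion $\Eff[\M](\K)\to\K^2$ preserves $\lambda$-presentable objects and that presentability ranks agree above some $\mu$. Every $f\in\M$ of high rank $\kappa^+$ is then a $\kappa^+$-directed colimit in $\Eff[\M](\K)$ of objects of rank $\le\kappa$. A standard chain argument extracts a smooth chain of length $\cf(\kappa)$ from this directed diagram. Smoothness uses the preservation of directed colimits, and the argument only needs all colimits of small directed subdiagrams. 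The links and coprojections are $\M$-effective, hence $\M$-quasieffective. This is condition (ii) of \thref{filtrations-imply-cellular-generation}, so $\M=\cell(\M_\mu)$, and in particular $\M$ is cofibrantly generated.

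\textbf{(i) $\Rightarrow$ (ii).} By \thref{elimination-of-retracts}, $\M=\cell(\M_\lambda)$. We must show that every $f\in\M$ is a $\lambda'$-directed colimit, in $\Eff[\M](\K)$, of small objects, for a suitable $\lambda'$. Write $f$ as a transfinite composition of pushouts of generators. Each subcomposition built on a small subset of the cells (closed under the attaching data) gives a small $f_S\in\M$. The comparison squares $f_S\to f$, and $f_S\to f_{S'}$ for $S\subseteq S'$, are effective: the map from the pushout is itself a relative cell complex on the remaining cells.

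\textbf{Main obstacle.} The main difficulty, and presumably the gap in \cite{liebermanCellularCategoriesStable2023}, is making the subcomplexes genuinely form a directed diagram. The attaching maps of cells into earlier stages must factor through small subcomplexes, and the set of closed subcomplexes must be $\lambda'$-directed with colimit $f$. Here I would invoke the fat small object argument of \cite{makkaiFatSmallObject2014}, which presents each cellular $f$ as a colimit of a ``good'' diagram of small cellular pieces, indexed by a $\lambda'$-directed poset, in which all the comparison squares are pushout-cellular. Combined with accessibility of $\K_\M$ to control the rank of the pieces, this gives $\lambda'$-accessibility of $\Eff[\M](\K)$.
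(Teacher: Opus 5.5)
\textbf{The gap is in (ii) $\Rightarrow$ (i), at your ``standard chain argument''.} Accessibility of $\Eff[\M](\K)$ does \emph{not} in general give a smooth chain of $\kappa$-presentable objects whose colimit is $f$ itself. Cutting a $\kappa$-directed diagram into a smooth union of small directed subdiagrams produces a chain whose colimit is the colimit of a small subdiagram. Then $f$ is only a \emph{retract} of that colimit, because morphisms of $\Eff[\M](\K)$ need not be monomorphisms, so the retraction need not be an isomorphism.

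What holds in any $\kappa$-accessible category with directed colimits is \cite[Corollary 8.9(2)]{liebermanSizesFiltrationsAccessible2020}: an object of rank $\kappa^+$ is a retract of an object of the same rank that admits a smooth filtration. Genuine filtrations need all morphisms to be monomorphisms \cite[Corollary 8.9(3)]{liebermanSizesFiltrationsAccessible2020}, which is exactly why \thref{cellular-generation-continuous} assumes $\M$ consists of monomorphisms. Your own phrase ``a $\kappa^+$-directed colimit of objects of rank $\le\kappa$'' already shows the issue: a $\kappa^+$-presentable object presented that way is merely a retract of one of the pieces. So the hypothesis of \thref{filtrations-imply-cellular-generation} is unavailable.

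The repair, as in \thref{effective-accessible-implies-cofibrant-generation}, is to run the induction on $\kappa$ for $\cof$ rather than $\cell$:
\begin{enumerate}
\item Take $f$ of rank $\kappa^+$ in $\Eff[\M](\K)$. If you only know its rank in $\K^2$, first pass to a $\kappa^+$-presentable $f_i$ of which $f$ is a $\K^2$-retract, or justify rank agreement via coherence and left-cancellability of effective squares.
\item Write $f$ as a retract of some $f'$ filtered by effective squares.
\item Get $f'\in\cell(\M_\kappa)\subseteq\cof(\M_\mu)$ from \thref{build-cellular-generation-from-filtration} and the induction hypothesis.
\item Conclude $f\in\cof(\M_\mu)$ by closure under retracts.
\end{enumerate}
This is where the retracts in ``cofibrantly generated'' genuinely enter.

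\textbf{For (i) $\Rightarrow$ (ii) you name the right tool, but your mechanism does not produce small objects of $\K^2$.} Your subcomplexes $f_S$ and the pieces $f_{\bot,x}\colon A_\bot\to A_x$ of a $\lambda$-good diagram all keep the whole, possibly large, domain $A_\bot$. They are small only relative to it. The paper works in two layers. Let $\M^*$ be the class of $\lambda$-directed colimits in $\Eff[\M](\K)$ of members of $\M_\lambda$, which are $\lambda$-presentable there by \thref{coherent-m-facts}.
\begin{enumerate}
\item Show $\Po(\M_\lambda)\subseteq\M^*$. Given a pushout $g\colon C\to D$ of $f\colon A\to B$ along $a\colon A\to C$, use accessibility of $\K_\M$ to write $C$ as a $\lambda$-directed colimit \emph{in $\K_\M$} of $\lambda$-presentable $C_i$ through which $a$ factors. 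The pushouts $g_i$ of $f$ along $A\to C_i$ then give pushout squares $g_i\to g$ with legs in $\M$, hence effective squares.
\item Each $f_{\bot,x}$ is a transfinite composition of length $<\lambda$ of morphisms in $\Po(\M_\lambda)$. It therefore lies in $\Po(\M_\lambda)$ by \cite[Lemma 4.20]{makkaiFatSmallObject2014}.
\item $\M^*$ is closed under $\lambda$-directed colimits in $\Eff[\M](\K)$ by \thref{iterating-lambda-directed-colimits}, so $f\in\M^*$.
\end{enumerate}

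\textbf{Accessibility of $\K_\M$ from (ii) needs an actual argument.} A directed-colimit-preserving functor out of $\Eff[\M](\K)$ does not by itself make its image accessible. Use instead the paper's description of $\K_\M$ as the pseudopullback of $\Eff[\M](\K)\hookrightarrow\K^2$ along $A\mapsto\mathrm{id}_A$, together with closure of accessible categories under pseudopullbacks.
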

\begin{theorem}
\thlabel{cellular-generation-continuous}
Let $\K$ be a locally presentable category and let $\M$ be a coherent continuous cellularly closed class of monomorphisms in $\K$. Then the following are equivalent.
\begin{enumerate}[label=(\roman*)]
\item The class $\M$ is cellularly generated and $\K_\M$ is accessible (with directed colimits).
\item The category $\Eff[\M](\K)$ is accessible (with directed colimits)
\end{enumerate}
\end{theorem}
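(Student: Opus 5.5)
The plan is to prove the two implications separately. (ii)$\Rightarrow$(i) is handled by the filtration machinery of Section~\ref{sec:cellular-generation-lfp}. (i)$\Rightarrow$(ii) is handled by a fat small object argument in the style of \cite{makkaiFatSmallObject2014}; this is the route needed to repair the gap in \cite[Theorem 3.1]{liebermanCellularCategoriesStable2023}.

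\textbf{Preliminary step: directed colimits in $\Eff[\M](\K)$.} Under continuity and coherence, directed colimits in $\Eff[\M](\K)$ are computed in $\K^2$. Let $(f_i: A_i \to B_i)_{i \in I}$ be a directed diagram of effective squares with colimit $f: A \to B$ in $\K^2$. Continuity gives that the coprojections $A_i \to A$ and $B_i \to B$ are in $\M$, and that $f \in \M$. For the map from the pushout $P_i$ of $A \leftarrow A_i \to B_i$ into $B$, write it as the colimit over $j \geq i$ of the maps from the pushouts of $A_j \leftarrow A_i \to B_i$ into $B_j$. These maps are in $\M$ by effectiveness and composition of effective squares \cite[Theorem 2.7]{liebermanCellularCategoriesStable2023}. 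So the map $P_i \to B$ is in $\M$ by continuity, since pushouts commute with directed colimits. Coherence is what guarantees that the cocone legs of such colimits stay in $\M$, and that colimits in $\Eff[\M](\K)$ of cocones of effective squares are again effective.

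\textbf{(ii)$\Rightarrow$(i).} First, $\K_\M$ is accessible. The functors $\K_\M \to \Eff[\M](\K)$, $A \mapsto id_A$, and $\dom: \Eff[\M](\K) \to \K_\M$ exhibit $\K_\M$ as a retract of $\Eff[\M](\K)$ by functors preserving directed colimits; a square between identities with both sides $g \in \M$ is trivially effective. A retract of an accessible category with directed colimits is accessible.

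For cellular generation, fix $\lambda$ such that $\Eff[\M](\K)$ is $\lambda$-accessible with directed colimits, and such that $\K$ is locally $\lambda$-presentable. By \thref{successor-presentability-rank}, any object $f$ of $\Eff[\M](\K)$ above rank $\lambda$ has rank $\mu^+$. Using the standard conversion of $\mu^+$-directed presentations into smooth chains of length $\cf(\mu)$ in accessible categories with directed colimits, we get a smooth chain of $\mu$-presentable objects of $\Eff[\M](\K)$ with colimit $f$, whose links and coprojections are effective squares. Presentability ranks in $\Eff[\M](\K)$ and in $\K^2$ must be compared up to a fixed increase of $\lambda$, via a uniformisation argument. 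With that, this is a smooth filtration in $\K^2$ consisting of $\M$-effective, hence $\M$-quasieffective, squares. \thref{filtrations-imply-cellular-generation} then yields $\M = \cell(\M_\mu)$.

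\textbf{(i)$\Rightarrow$(ii).} Fix $\M = \cell(\M_0)$ and $\lambda$ large enough that $\K$ and $\K_\M$ are $\lambda$-accessible and $\M_0 \subseteq \M_\lambda$. The candidate generators of $\Eff[\M](\K)$ are the morphisms in $\M_\lambda$. By the preliminary step, $\Eff[\M](\K)$ has directed colimits, so it remains to show that every $f \in \M$ is a $\lambda$-directed colimit in $\Eff[\M](\K)$ of elements of $\M_\lambda$. By \cite{makkaiFatSmallObject2014}, $f$ is the colimit of a ``good'' $\lambda$-directed diagram of $\lambda$-small subcomplexes of a fat cellular presentation. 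I need the squares relating a subcomplex to the whole, and to larger subcomplexes, to be $\M$-effective. The pushout of a subcomplex into the ambient complex is itself a relative cell complex attaching the remaining cells, hence lies in $\cell(\M_0) = \M$; the vertical maps are in $\M$ for the same reason. Here the monomorphism hypothesis is used: it makes the ``union'' of subcomplexes well-behaved, so that the induced maps from pushouts really are the cell attachments of the complementary cells.

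The main obstacle is this last step: setting up the fat small object argument precisely enough that the diagram is $\lambda$-directed in $\Eff[\M](\K)$, not merely in $\K^2$. My first attempt would be to index by downward closed $\lambda$-small subsets of cells in a tree-like presentation, and verify effectiveness via iterated pushout pasting as in (F2)--(F3) of \thref{build-cellular-generation-from-filtration}.
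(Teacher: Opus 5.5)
\textbf{Gap in (i)$\Rightarrow$(ii).} The diagram produced by the fat small object argument consists of the composites $f_{\bot,x}\colon A_\bot \to A_x$. Every one of them has the full domain $A_\bot = \dom(f)$, which can be arbitrarily large. So they are not in $\M_\lambda$, and writing $f$ as their $\lambda$-directed colimit says nothing about accessibility of $\Eff[\M](\K)$.

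Conversely, the effectiveness you single out as the main obstacle is trivial. The square from $f_{\bot,x}$ to $f_{\bot,y}$ has $id_{A_\bot}$ as a side, so the relevant pushout is $A_x$ and the induced map is just $f_{xy}$. No monomorphisms are needed in this direction.

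The missing idea is the one place where accessibility of $\K_\M$ is used, and your plan never uses it:
\begin{enumerate}
\item Show that any pushout $g\colon C \to D$ of some $h \in \M_\lambda$ along $a$ is a $\lambda$-directed colimit in $\Eff[\M](\K)$ of morphisms in $\M_\lambda$. Write $C$ as a $\lambda$-directed colimit in $\K_\M$ of $\lambda$-presentable objects $C_i$ through which $a$ factors. Push $h$ out along each factorisation. The coprojections $C_i \to C$ lie in $\M$, and the comparison squares are pushout squares, hence effective.
\item Apply \cite[Lemma 4.20]{makkaiFatSmallObject2014}: each $f_{\bot,x}$ is a transfinite composition of length $<\lambda$ of morphisms in $\Po(\M_\lambda)$, so it is itself in $\Po(\M_\lambda)$.
\item Conclude with \thref{iterating-lambda-directed-colimits}.
\end{enumerate}
You also still need that morphisms in $\M_\lambda$ are $\lambda$-presentable in $\Eff[\M](\K)$. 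This uses coherence (left-cancellability of effective squares) and agreement of presentability ranks in $\K$ and $\K_\M$ from some cardinal on; see \thref{coherent-m-facts}.

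\textbf{Gap in (ii)$\Rightarrow$(i).} Your retract argument for the accessibility of $\K_\M$ is a fine alternative to the paper's pseudopullback argument. The problem is the ``standard conversion'' of an object of rank $\mu^+$ into a smooth chain of smaller objects. This is not available in an arbitrary accessible category with directed colimits. In general one only knows that such an object is a \emph{retract} of an object of the same rank that admits a smooth filtration \cite[Corollary 8.9(2)]{liebermanSizesFiltrationsAccessible2020}. Retracts are exactly what cellular generation cannot absorb, which is why \thref{cofibrant-generation-continuous} needs no monomorphisms.

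To get an honest filtration you need \cite[Corollary 8.9(3)]{liebermanSizesFiltrationsAccessible2020}, which requires all morphisms of $\Eff[\M](\K)$ to be monomorphisms. This, not the behaviour of unions of subcomplexes, is where the hypothesis that $\M$ consists of monomorphisms enters.

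Similarly, uniformisation only gives that the inclusion $\Eff[\M](\K) \hookrightarrow \K^2$ \emph{preserves} $\kappa$-presentable objects for large $\kappa$. \thref{filtrations-imply-cellular-generation} needs a filtration relative to $\rank_{\K^2}(f)$, so the inclusion must also \emph{reflect} $\kappa$-presentability. This follows from \cite[Propositions 3.7 and 4.3]{bekeAbstractElementaryClasses2012} once the inclusion reflects split epimorphisms, which is again immediate because $\M$ consists of monomorphisms.
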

\begin{proof}[Proof of \thref{cofibrant-generation-continuous,cellular-generation-continuous}]
We break up the proofs in a few parts, which are proved separately below, and where we are more precise about the exact assumptions that are needed for each direction. For both theorems, (i) $\Rightarrow$  (ii) follows from \thref{cellular-generation-implies-accessible} (in the case of \thref{cofibrant-generation-continuous} because cofibrantly generated classes are in particular cellularly generated by \thref{elimination-of-retracts}). The converse, (ii) $\Rightarrow$ (i), follows for each theorem from two out of \thref{effective-accessible-implies-cofibrant-generation,monos-effective-accessible-implies-cellular-generation,effective-accessible-implies-km-accessible}.
\end{proof}
As a corollary of the proof of \thref{cellular-generation-continuous} we again get that morphisms of such classes $\M$ can be expressed as transfinite compositions of cardinal length, just like in \thref{transfinite-compositions-of-cardinal-length}. However, in this case the proof is purely categorical and the category can be any locally presentable category.
\begin{corollary}
\thlabel{transfinite-compositions-of-cardinal-length-continuous}
Let $\M$ be a coherent continuous cellularly closed class of monomorphisms in a locally presentable category $\K$. Then there is $\mu$ such that for all regular $\kappa \geq \mu$, any morphism $f \in \M$ between $\kappa$-presentable objects can be expressed as a transfinite composition of length exactly $\kappa'$ of morphisms in $\Po(\M_\mu)$, for some (possibly finite) cardinal $\kappa' < \kappa$.
\end{corollary}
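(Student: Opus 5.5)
The plan is to reduce the corollary to the purely categorical machinery already in place, mirroring the lfp case. There, the ``moreover'' part of \thref{filtrations-imply-cellular-generation} produces transfinite compositions of cardinal length once every $f \in \M$ of large enough rank admits a smooth filtration in $\K^2$ by $\M$-quasieffective squares. The work is therefore to produce such filtrations categorically from the hypotheses on $\M$. The hypotheses are: coherent, continuous, cellularly closed, and consisting of monomorphisms.

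\textbf{Step 1: choose $\mu$.} By (i) $\Rightarrow$ (ii) of \thref{cellular-generation-continuous} (via \thref{cellular-generation-implies-accessible}), $\Eff[\M](\K)$ is accessible with directed colimits. Fix $\lambda$ such that $\K$ is locally $\lambda$-presentable and $\Eff[\M](\K)$ is $\lambda$-accessible. By the uniformization theorem for accessible functors, we can enlarge to $\mu \geq \lambda$ so that the following hold:
\begin{itemize}
\item the inclusion $\Eff[\M](\K) \to \K^2$ and the forgetful functors preserve $\mu$-presentable objects;
\item for every regular $\kappa \geq \mu$, an object of $\Eff[\M](\K)$ is $\kappa$-presentable iff it is $\kappa$-presentable in $\K^2$ (sharply smaller ranks);
\item every object of $\Eff[\M](\K)$ is a $\kappa$-directed colimit of $\kappa$-presentable ones.
\end{itemize}

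\textbf{Step 2: build filtrations.} Take $f \in \M$ with $\rank_{\K^2}(f) = \nu^+ > \mu$, as permitted by \thref{successor-presentability-rank}. Write $f$ in $\Eff[\M](\K)$ as a $\nu$-directed colimit of $\nu$-presentable objects; for $\nu$ singular, use a suitable $\mu$-directed colimit of $(<\nu)$-presentable objects. Then extract a smooth chain $(f_i)_{i<\cf(\nu)}$ in $\Eff[\M](\K)$ by the standard ``directed colimit to chain'' argument: take unions of increasing subdiagrams of size $<\nu$ and form their colimits in $\Eff[\M](\K)$. Because $\M$ is continuous and $\Eff[\M](\K)$ has directed colimits computed in $\K^2$, these are colimits in $\K^2$. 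The chain is therefore smooth in $\K^2$, with colimit $f$ and links $(<\nu^+)$-presentable. All transition maps and coprojections are $\M$-effective squares, hence in particular $\M$-quasieffective. This yields hypothesis (ii) of \thref{filtrations-imply-cellular-generation} for $\mu$.

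\textbf{Step 3: conclude.} Apply the ``moreover'' of \thref{filtrations-imply-cellular-generation}. A morphism $f \in \M$ between $\kappa$-presentable objects is $\kappa$-presentable in $\K^2$, so $f \in \M_\kappa$. That result then gives $f$ as a transfinite composition of length exactly some cardinal $\kappa' < \kappa$ of morphisms in $\Po(\M_\mu)$.

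\textbf{Main obstacle.} The hard part is Step 2: ensuring that the chain members really have $\K^2$-rank $\le \nu$, and that colimits in $\Eff[\M](\K)$ of the subdiagrams agree with those in $\K^2$. Coherence and monomorphisms are needed here: coherence to show that the coprojections into $f$ are $\M$-effective, and monos so that colimits of subdiagrams of effective squares remain in $\M$ and presentability is controlled. If uniformization alone does not suffice for singular $\nu$, I would first try an induction on cardinals as in the proof of \thref{filtrations-imply-cellular-generation}, handling limit cardinals via $\M_\kappa = \bigcup_{\kappa'<\kappa}\M_{\kappa'}$.
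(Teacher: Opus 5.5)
Your outline is the paper's; there the corollary is the ``moreover'' part of \thref{monos-effective-accessible-implies-cellular-generation}. You get a smooth filtration of $f$ in $\Eff[\M](\K)$, observe it stays one in $\K^2$ with $\M$-effective (hence $\M$-quasieffective) squares, and apply the ``moreover'' of \thref{filtrations-imply-cellular-generation}. Like the paper, you implicitly need $\Eff[\M](\K)$ to be accessible. So the corollary is about classes satisfying the equivalent conditions of \thref{cellular-generation-continuous}, and under that reading your Step 1 is fine.

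The gap is Step 2, exactly where the paper cites \cite[Corollary 8.9(3)]{liebermanSizesFiltrationsAccessible2020}. The ``directed colimit to chain'' argument requires the index set $I$ of $(f_i)_{i\in I}$ to have size $\le\nu$, and nothing bounds $|I|$. To shrink it you must use $\nu^+$-presentability of $f$:
\begin{itemize}
\item $f$ is the $\nu^+$-directed colimit of the $g_J=\colim_{i\in J}f_i$ over directed $J\subseteq I$ with $|J|\le\nu$. Take these $J$ merely directed, which $\Eff[\M](\K)$ allows; demanding $\nu$-directedness could force size $\nu^{<\nu}>\nu$.
\item This colimit is computed in $\K^2$, so there is $s$ with $rs=\mathrm{id}_f$ for some coprojection $r\colon g_J\to f$.
\end{itemize}
This only makes $f$ a \emph{retract} of $g_J$, and retracts of filtered objects need not be filtered. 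That is precisely why \thref{effective-accessible-implies-cofibrant-generation}, without monomorphisms, only gives cofibrant generation.

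The missing idea is that $r$ is a morphism of $\Eff[\M](\K)$, so both its components lie in $\M$ and are monic. Thus $r$ is both a monomorphism and a split epimorphism in $\K^2$, hence an isomorphism, and $f\cong g_J$. Now take a continuous increasing chain $(J_\alpha)_{\alpha<\cf(\nu)}$ of directed sets of size $<\nu$ with union $J$. It gives the filtration $(g_{J_\alpha})$ with the correct $\K^2$-ranks, in the regular and singular case alike.

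This is where monicity is indispensable. The paper also uses it to make the inclusion reflect split epimorphisms, which \cite[Proposition 3.7]{bekeAbstractElementaryClasses2012} needs for the ``iff'' in your Step 1. Uniformization alone gives neither reflection nor all regular $\kappa\ge\mu$.

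The roles you assign to the hypotheses are off:
\begin{itemize}
\item Coprojections of a colimit in $\Eff[\M](\K)$ are $\M$-effective by definition, so coherence is not needed for them.
\item Directed colimits of effective squares stay in $\M$ by continuity (\thref{continuous-gives-directed-colimits-of-effective-squares}), not by monicity.
\item Coherence is used only to obtain accessibility of $\Eff[\M](\K)$ from (i).
\end{itemize}
Your fallback, an induction as in \thref{filtrations-imply-cellular-generation}, would not help, since that induction presupposes the filtrations.
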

\begin{proof}
This is the ``moreover'' part of \thref{monos-effective-accessible-implies-cellular-generation}.
\end{proof}

The main point about continuity is that directed colimits behave nicely in $(\K_\M)^2$ and $\Eff[\M](\K)$. The former follows straightforwardly from contuinity and the latter is \cite[Lemma 2.11]{liebermanCellularCategoriesStable2023} (cellularity is not used).
\begin{fact}
\thlabel{continuous-gives-directed-colimits-of-effective-squares}
Let $\M$ be a continuous class of morphisms in a locally presentable category $\K$. Then categories $\K_\M^2$ and $\Eff[\M](\K)$ both have directed colimits and these are computed as in $\K^2$, so the inclusion functors $\Eff[\M](\K) \hookrightarrow (\K_\M)^2 \hookrightarrow \K^2$ both preserve directed colimits.
\end{fact}
\begin{lemma}
\thlabel{coherent-m-facts}
Let $\M$ be a coherent class of morphisms in a locally presentable category $\K$.
\begin{enumerate}[label=(\roman*)]
\item The commutative squares that have all morphisms in $\M$ form a coherent class of morphisms in $\K^2$.
\item If $\M$ is closed under pushouts then the $\M$-effective squares form a left-cancellable class of morphisms in $(\K_\M)^2$.
\item If $\M$ is continuous then for any regular cardinal $\lambda$, an object $A$ being $\lambda$-presentable in $\K$ implies that it is also $\lambda$-presentable in $\K_\M$.
\item If $\M$ is continuous and $\K_\M$ is accessible then there is some cardinal $\lambda$ such that $\K$ and $\K_\M$ agree on presentability ranks above $\lambda$.
\end{enumerate}
\end{lemma}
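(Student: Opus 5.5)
I will prove the four items of \thref{coherent-m-facts} separately; all are direct manipulations of the definitions together with \thref{continuous-gives-directed-colimits-of-effective-squares}.

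For (i), a morphism in $\K^2$ is a pair $(a,b)$ of horizontal maps forming a commutative square, and composition is componentwise. If $(a',b')\circ(a,b)$ and $(a',b')$ have all components (the horizontal maps and the vertical sides) in $\M$, then $a'a, a' \in \M$ gives $a\in\M$ by coherence, and similarly $b\in\M$. The vertical sides of the square $(a,b)$ are the domain and codomain objects of $\K^2$, which are already arrows of $\M$. So the square $(a,b)$ has all morphisms in $\M$. Closure under composition and isomorphisms needs the same caveat: I read (i) as the coherence condition only.

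For (ii), let $s\colon f\to g$ and $t\colon g\to h$ be squares with all arrows in $\M$, and suppose $ts$ is $\M$-effective. I will show $s$ is $\M$-effective by pasting pushouts. Write $P_s, P_t, P_{ts}$ for the relevant pushouts and $u_s, u_t, u_{ts}$ for the induced maps. By the pasting law, the pushout of $f$ along the composite top map factors as $P_s$ followed by the pushout $Q$ of $u_s$-data along $a'$. Concretely, $u_{ts}$ factors as
\[P_{ts}\to P_s\sqcup_{\dom g}\dom h \xrightarrow{\ u_s\sqcup \mathrm{id}\ } \cdots \to \cod h,\]
where the last map is $u_t$ precomposed suitably. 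The cleaner route is to write $u_{ts}= u_t\circ v$ with $v$ a pushout of $u_s$. Then $u_t\in\M$ (I only need $\M$ closed under pushouts for $v$), and coherence applied to $u_t\circ v$ gives $v\in\M$. But I need $u_s$ itself, not its pushout $v$. So the correct factorisation to use is instead $u_{ts} = w\circ u'$, where $u'\colon P_{ts}\to P'$ and the last factor is a pushout of $u_s$ along the coprojection. The main obstacle is arranging this so that coherence yields $u_s$. I expect to use that $\cod(u_s)\to\cod(h)$ (that is, the map $b'$) is in $\M$ and that $b'\circ u_s$ equals the induced map $P_s\to \cod h$. That induced map is the composite $P_s\to P_{ts}\to\cod h$, with first factor a pushout of $a'\in\M$ and hence in $\M$, and second factor $u_{ts}\in\M$. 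Hence $b'u_s\in\M$ and $b'\in\M$, so $u_s\in\M$ by coherence. This is the argument I will write out; it only needs $\M$ closed under pushouts and composable.

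For (iii), take a $\lambda$-directed colimit $(D_i\to D)$ in $\K_\M$. By continuity it is also a colimit in $\K$, so a map $A\to D$ in $\M$ factors essentially uniquely in $\K$ as $A\to D_i\to D$. Coherence puts the factor $A\to D_i$ in $\M$, since $D_i\to D$ and the composite are in $\M$. Essential uniqueness transfers directly, because the equalising stage is computed by equality of maps.

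For (iv), pick $\lambda_0$ with both $\K$ and $\K_\M$ being $\lambda_0$-accessible and with the inclusion $\K_\M\to\K$ preserving $\lambda_0$-directed colimits. By uniformisation, choose $\lambda\ge\lambda_0$, sharply larger, such that the inclusion sends $\lambda$-presentable objects of $\K_\M$ to $\lambda$-presentable objects of $\K$ and the same holds for every regular $\mu\geq\lambda$ (the standard result on accessible functors). Together with (iii), the two notions of $\mu$-presentability coincide for all regular $\mu\ge\lambda$, and hence the ranks agree above $\lambda$.
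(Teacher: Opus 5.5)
Your arguments for (i)--(iii) are correct, and they are the routine verifications the paper calls straightforward. In (ii), only your final argument is needed. Write $s=(a,b)\colon f\to g$ and $t=(a',b')\colon g\to h$ with $g\colon C\to D$. Then the canonical map $P_s\to P_{ts}$ is the pushout of $a'$ along $C\to P_s$, so $b'u_s=u_{ts}\circ(P_s\to P_{ts})\in\M$, and coherence gives $u_s\in\M$. You should delete the abandoned attempts before it, in particular the claim $u_t\in\M$: it is not available, since $t$ is not assumed to be effective.

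For (iv) you take a slightly different route from the paper in the reflection direction. The paper applies \cite[Propositions 3.7 and 4.3]{bekeAbstractElementaryClasses2012} to the inclusion $\K_\M\hookrightarrow\K$. The only thing it checks is that this inclusion reflects split epimorphisms: if $e\in\M$ and $em=\mathrm{id}$, then $m\in\M$ by coherence. Proposition 3.7 then turns this into reflection of presentability. You instead get reflection directly from (iii), for every regular cardinal, which gives $\rank_{\K_\M}(A)\le\rank_\K(A)$ globally; this is a little more self-contained.

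For the preservation direction, however, you need to fix the citation. The classical uniformisation theorem for a functor preserving $\lambda_0$-directed colimits only yields preservation at the sharply larger cardinals above some bound. That class can omit successor cardinals (e.g.\ $\aleph_{\omega+1}$ is not sharply larger than $\aleph_1$), and a presentability rank may sit at exactly such a cardinal. So it does not by itself let you compare ranks. What you need is preservation of $\mu$-presentability for \emph{all} regular $\mu$ above a threshold. That is \cite[Proposition 4.3]{bekeAbstractElementaryClasses2012}, and it applies here because continuity makes the inclusion preserve all directed colimits and both categories have them. With that citation in place, your conclusion is correct: the two notions of $\mu$-presentability coincide for all regular $\mu\ge\lambda$, and hence the ranks agree above $\lambda$.
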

\begin{proof}
The first three items are straightforward. For (iv) we use \cite[Propositions 3.7 and 4.3]{bekeAbstractElementaryClasses2012}, the combination of which gives us exactly what we need. These propositions apply to any directed colimit preserving functor between accessible categories with all directed colimits, that reflect split epimorphisms. That last assumption is quickly verified: let $e \in \M$ be a split epimorphism in $\K$, then there is $m$ such that $em = id$, which implies $m \in \M$ by coherence.
\end{proof}
\begin{definition}
\thlabel{good-poset}
Let $\lambda$ be a cardinal. A poset $P$ is called \emph{$\lambda$-good} if it is well-founded, has a least element $\bot$ and for any $x \in P$ we have $|\{y \in P : y \leq x\}| < \lambda$.

An element $x \in P$ is called \emph{isolated} if $\{y \in P : y < x\}$ has a top element, which we will denote by $x^-$. 
\end{definition}
\begin{definition}
\thlabel{good-diagram}
Let $P$ be a $\lambda$-good poset and let $(A_x)_{x \in P}$ be a diagram. We call the diagram \emph{$\lambda$-good} if it is smooth. That is, for every non-isolated $x \in P$, with $x \neq \bot$, we have that $A_x = \colim_{y < x} A_y$.

The \emph{links} of such a $\lambda$-good diagram are the morphisms $A_{x^-} \to A_x$, where $x$ is isolated. The \emph{composite} of the diagram is the coprojection $A_\bot \to \colim_{x \in P} A_x$.
\end{definition}
The point of $\lambda$-good posets is that we can use them to replace the transfinite compositions in cellularly generated classes by $\lambda$-good $\lambda$-directed posets.
\begin{fact}[fat small object argument]
\thlabel{fat-small-object-argument}
Let $\K$ be a cocomplete category and let $\M_0$ be a class of morphisms in $\K$.
\begin{enumerate}[label=(\roman*)]
\item Suppose that there is $\lambda$ such that every morphism in $\M_0$ has $\lambda$-presentable domain. Then every morphism in $\cell(\M_0)$ is the composite of a $\lambda$-good $\lambda$-directed diagram with links in $\Po(\M_0)$.
\item For any $\lambda$, the composite of a $\lambda$-good diagram in $\K$ with links in $\Po(\M_0)$ is in $\cell(\M_0)$. Moreover, if $D: P \to \K$ is a $\lambda$-good diagram then its composite is a transfinite composition of length $< |P|$ of morphisms in $\Po(\M_0)$.
\end{enumerate}
\end{fact}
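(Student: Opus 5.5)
For (ii) I would linearise the good poset. Let $D: P \to \K$ be $\lambda$-good. Since $P$ is well-founded, I would fix a well-order $\trianglelefteq$ on $P$ extending the partial order and choose it of order type at most $|P|$; for infinite $P$ any enumeration by the initial ordinal of $|P|$ can be rearranged by rank to extend the order. For each ordinal $\alpha$, let $I_\alpha$ be the set of the first $\alpha$ elements. Each $I_\alpha$ is a downward closed subset of $P$. Put $A_{I} := \colim_{x \in I} A_x$, with $A_{\{\bot\}} = A_\bot$.

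The chain $(A_{I_\alpha})_\alpha$ is smooth, since colimits over unions of downsets are colimits of the partial colimits. Passing from $I_\alpha$ to $I_{\alpha+1} = I_\alpha \cup \{x\}$ with $x \neq \bot$ gives a pushout square: $A_{I_{\alpha+1}}$ is the pushout of $A_{I_\alpha} \leftarrow \colim_{y<x} A_y \to A_x$. This should follow from a direct computation with the universal properties, using that $\{y : y<x\} \subseteq I_\alpha$. If $x$ is non-isolated, smoothness gives $\colim_{y<x}A_y = A_x$, so the step is an isomorphism. If $x$ is isolated, then $\colim_{y<x} A_y = A_{x^-}$ and the step is a pushout of the link $A_{x^-}\to A_x$, which lies in $\Po(\M_0)$. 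Pushouts of pushouts are pushouts, so each step is in $\Po(\M_0)$. The composite $A_\bot \to \colim_P A_x$ is therefore a transfinite composition of length at most $|P|$ in $\Po(\M_0)$, hence in $\cell(\M_0)$. The bound $<|P|$ in the statement only needs the trivial adjustment of dropping the first step at $\bot$ and padding with identities.

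For (i) I would take $f\in\cell(\M_0)$ written as a smooth chain $(A_i)_{i\le\delta}$. Each link $A_i\to A_{i+1}$ is a pushout of some $g_i: C_i\to D_i$ in $\M_0$ along an attaching map $a_i: C_i\to A_i$. For a subset $S\subseteq\delta$ I would define recursively a subcomplex $A_S$ and maps $A_S \to A_{\sup S}$: glue the cells $g_i$, $i\in S$, in increasing order onto $A_0$, and call $S$ \emph{closed} if each $a_i$ with $i\in S$ factors through $A_{S\cap i}$ via a chosen factorisation. Take $P$ to be the closed subsets of size $<\lambda$, ordered by inclusion, with $\bot=\emptyset$. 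Then $P$ is well-founded, and $S\setminus\{\max S\}$ is again closed, so the isolated elements are exactly those $S$ with a maximum. Their links are pushouts of $g_{\max S}$, and at the other elements the construction is smooth by definition. For $\lambda$-directedness and cofinality I would show by induction on $i$ that $A_i = \colim_{S\in P,\, S\subseteq i} A_S$ is a $\lambda$-directed colimit. Since $C_i$ is $\lambda$-presentable, $a_i$ then factors through some $A_S$, and closing off a union of $<\lambda$ closed sets of size $<\lambda$ stays below $\lambda$ by regularity.

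The main obstacle is making the factorisations coherent, since they are only essentially unique. Different closed sets must use compatible factorisations of $a_i$, or $S\mapsto A_S$ is not a functor. I would fix this by making the chosen factorisation part of the data: elements of $P$ become pairs consisting of a set $S$ and a choice of factorisations, ordered by inclusion compatible with the choices. Essential uniqueness should still give directedness after enlarging. The second concern is the size bound $|\{T\le S\}|<\lambda$, since a set of size $<\lambda$ may have $\ge\lambda$ closed subsets. If that fails, I would either assume or arrange $\lambda$ with $\mu^{<\lambda}<\lambda$ for $\mu<\lambda$, or restrict the order to a tree-like sub-poset of $P$, following \cite{makkaiFatSmallObject2014}.
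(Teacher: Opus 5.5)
\textbf{Part (i) has a genuine gap.} The missing step is exactly the nontrivial content of the fat small object argument, which the paper takes from \cite[Theorem 4.11]{makkaiFatSmallObject2014} without proof.

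Your poset $P$ of closed sets of size $<\lambda$, ordered by inclusion, is $\lambda$-directed. For uncountable $\lambda$, however, it is in general not $\lambda$-good. Take $\Set$ with $\M_0=\{\emptyset\to 1\}$, $\lambda=\aleph_1$, and $f\colon\emptyset\to\omega$ written as a chain of length $\omega$. Every subset of $\omega$ is closed, so the element $\omega\in P$ has $2^{\aleph_0}\geq\aleph_1$ predecessors.

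This is not a defect of your particular closure notion. In any family of subcomplexes ordered by inclusion and closed under unions of $<\lambda$ members, if some member lies above $\mu$ pairwise disjoint nonempty members, then it has $2^\mu$ predecessors. A correct witness must therefore allow distinct elements with isomorphic values and identity links. In the example, one can use the well-order $\omega_1$ with $D(n)=n$ for $n<\omega$ and $D(\alpha)=\omega$ for $\omega\leq\alpha<\omega_1$.

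Neither of your fallbacks closes the gap:
\begin{itemize}
\item The condition $\mu^{<\lambda}<\lambda$ is never satisfied, since $\mu^{<\lambda}\geq 2^{<\lambda}\geq\lambda$ for $2\leq\mu<\lambda$. The condition you presumably want, $2^\mu<\lambda$ for all $\mu<\lambda$, makes an uncountable regular $\lambda$ inaccessible. That cannot be arranged in ZFC, and the Fact concerns the given $\lambda$.
\item Restricting to a tree-like subposet ``following \cite{makkaiFatSmallObject2014}'' is just the citation the paper itself makes.
\end{itemize}

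Two smaller issues:
\begin{itemize}
\item Coherence has a simpler fix than your pairs-with-choices, which you have not shown to be directed. By induction on $i$, fix once and for all one closed $S_i\subseteq i$ and one factorisation of $a_i$ through $A_{S_i}$. Then call $S$ closed iff $i\in S$ implies $S_i\subseteq S$.
\item Having a maximum does not make $S$ isolated. With two independent cells, the proper closed subsets of $\{0,1\}$ are $\emptyset,\{0\},\{1\}$, which have no top. So smoothness at such $S$ requires a computation; it does not hold by definition.
\end{itemize}

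\textbf{Part (ii).} Your linearisation argument is correct and is the standard proof that the composite lies in $\cell(\M_0)$. Your length claims, however, are false.

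A linear extension of order type at most $|P|$ need not exist: a well-order such as $P=\omega+2$ is its own only linear extension.

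The bound ``$<|P|$'' (even ``$\leq|P|$'') cannot be reached by dropping $\bot$ and padding, because it fails for infinite $P$. Work in graphs. Let $\M_0$ consist of $\emptyset\to 1$ and the inclusion of a countable discrete graph $S$ into the cone $S\cup\{t\}$. Let $P=\omega+2$ attach $\omega$ isolated vertices and then one cone over all of them. Any transfinite composition of length $\leq\omega$ of maps in $\Po(\M_0)$ starting at $\emptyset$ produces only vertices of finite in-degree. Hence this composite is not such a composition.

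What your argument actually gives is length $<|P|^+$. That is all the paper needs: length $<\lambda$ when $|P|<\lambda$, in the proof of \thref{cellular-generation-implies-accessible}.
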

\begin{proof}
Item (i) is \cite[Theorem 4.11]{makkaiFatSmallObject2014}, and item (ii) is \cite[Proposition 4.5 and Remark 4.6]{makkaiFatSmallObject2014}.
\end{proof}
\begin{lemma}
\thlabel{iterating-lambda-directed-colimits}
Let $\K$ be any category with $\lambda$-directed colimits and let $\A$ be a class of $\lambda$-presentable objects in $\K$. Suppose that $B$ is the $\lambda$-directed colimit of a diagram $(B_i)_{i \in I}$ in $\K$, where $B_i$ is a $\lambda$-directed colimit of objects in $\A$, for each $i \in I$. Then $B$ is a $\lambda$-directed colimit of objects in $\A$.
\end{lemma}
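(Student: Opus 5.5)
The plan is to build a single $\lambda$-directed diagram of objects of $\A$ with colimit $B$, by ``flattening'' the two-level presentation into a category of elements and checking that it is $\lambda$-directed.

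For each $i \in I$ write $B_i = \colim_{k \in K_i} A_{i,k}$ with $K_i$ $\lambda$-directed and $A_{i,k} \in \A$. Consider the category $\D$ whose objects are pairs $(A, a)$ with $A \in \A$ and $a: A \to B$ a morphism that factors through some coprojection $b_i: B_i \to B$. Its morphisms $(A,a) \to (A',a')$ are the morphisms $h: A \to A'$ with $a'h = a$. (To keep $\D$ small we may replace $\A$ by a set of representatives; if $\A$ is a proper class I would restrict to the set of objects occurring among the $A_{i,k}$, which suffices.) The forgetful diagram $\D \to \K$ has an obvious cocone into $B$, and I will show two things: $\D$ is $\lambda$-filtered, and this cocone is a colimit. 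Since a $\lambda$-filtered colimit can be replaced by a $\lambda$-directed one with the same objects (\cite[Theorem 1.5]{adamekLocallyPresentableAccessible1994}), this gives the claim.

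For $\lambda$-filteredness, take fewer than $\lambda$ objects $(A_s, a_s)$. Each $a_s$ factors through some $B_{i_s}$. Choose $i \in I$ above all the $i_s$, which is possible because $I$ is $\lambda$-directed. Each $A_s$ is $\lambda$-presentable, so the factorisation $A_s \to B_i$ factors further through some $A_{i,k_s}$. Choose $k \in K_i$ above all the $k_s$; then the $(A_{i,k}, b_i c_{i,k})$ bound the family. To equalise fewer than $\lambda$ parallel morphisms $h_t: (A,a) \rightrightarrows (A',a')$ we use essential uniqueness of factorisations, applied twice. Factor $a'$ through some $A_{i,k}$. The composites $A \to A_{i,k} \to B_i \to B$ agree, so by essential uniqueness through the colimit $B$ they already agree in some $B_{i'}$. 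Applying $\lambda$-presentability of $A$ again in $B_{i'}$, they agree at some $A_{i',k'}$. Since fewer than $\lambda$ conditions are involved, regularity of $\lambda$ lets us take a common bound, and this gives the coequalising morphism.

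For the colimit property, every coprojection $c_{i,k}$ composed with $b_i$ is an object of $\D$, so the cocone is jointly ``surjective''. Given another cocone $(x_{(A,a)}: A \to X)$, restricting it to the objects $(A_{i,k}, b_i c_{i,k})$ yields compatible cocones on each $B_i$, hence maps $B_i \to X$. These are compatible along $I$ and therefore induce $B \to X$. Uniqueness and agreement on all of $\D$ follow because every $(A,a)$ maps into some $(A_{i,k}, b_i c_{i,k})$ by $\lambda$-presentability.

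The main obstacle is the bookkeeping in the equaliser step of $\lambda$-filteredness, which requires the double application of essential uniqueness (first in $B$ over $I$, then in $B_{i'}$ over $K_{i'}$). A secondary point is smallness of $\D$, which I would handle by restricting to the set of objects $\{A_{i,k}\}$.
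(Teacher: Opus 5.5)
Your proof is correct, and it takes a genuinely different route from the paper. The paper argues abstractly. It forms the free completion $\lambda\text{-}\Ind(\A)$ and uses that the comparison functor $F\colon\lambda\text{-}\Ind(\A)\to\K$ is fully faithful (because the objects of $\A$ are $\lambda$-presentable) and preserves $\lambda$-directed colimits. It then lifts $(B_i)_{i\in I}$ along $F$ to $(C_i)_{i\in I}$, takes $C=\colim_{i\in I}C_i$ inside $\lambda\text{-}\Ind(\A)$, and pushes the canonical presentation of $C$ forward to $B$. You instead flatten the two-level presentation by hand: you show the comma category $\D$ of arrows into $B$ from the $A_{i,k}$ is small, $\lambda$-filtered and has colimit $B$, and then pass to a cofinal $\lambda$-directed poset. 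The paper's argument is shorter, but it outsources the work to standard $\Ind$-completion facts (referenced only for $\lambda=\omega$) and needs $\lambda$-directed colimits in $\K$ to define $F$. Yours is self-contained and uses only $\lambda$-presentability of the objects of $\A$ plus the filtered-versus-directed cofinality result. It never needs colimits in $\K$ beyond the given ones, so it is precisely the ``more elementary, but lengthy'' proof mentioned in \thref{iterating-lambda-directed-colimits-elementary-proof}, and it yields the generalisation stated there.

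When writing it out, fill in two compressed steps.

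In the coequaliser step, before the second use of essential uniqueness you must factor $A'\to A_{i,k}\to B_i\to B_{i'}$ through some $c_{i',k_0}$ (by $\lambda$-presentability of $A'$), so that there are morphisms $A\to A_{i',k_0}$ to compare. Then pick $k'\geq k_0$ where they agree, and check that $A'\to A_{i',k'}$ is a morphism $(A',a')\to(A_{i',k'},b_{i'}c_{i',k'})$ of $\D$.

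In the colimit step, compatibility of the maps $y_i\colon B_i\to X$ along $I$ is not automatic. For $i\leq j$, factor the composite of $c_{i,k}$ with $B_i\to B_j$ through some $c_{j,k'}$, and observe that this factorisation is a morphism of $\D$. It follows that $y_j\circ(B_i\to B_j)$ and $y_i$ agree on every coprojection $c_{i,k}$.

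Finally, the cited Theorem 1.5 is the case $\lambda=\omega$. For general regular $\lambda$ you need its $\lambda$-analogue, which has the same proof.
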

\begin{proof}
Write $\lambda\text{-}\Ind(\A)$ for the free completion of $\A$, viewed as a full subcategory of $\K$, under $\lambda$-directed colimits. This construction, and the two facts below, are standard. The case where $\lambda = \omega$, can be found in \cite[Corollaire 8.5.2 and Proposition 8.7.5(a)]{grothendieckTheorieToposCohomologie1972}, or in \cite[Theorem 6.1.8, Corollary 6.3.2 and Proposition 6.3.4]{kashiwaraCategoriesSheaves2006} for a more modern presentation. Generalisation to uncountable $\lambda$ is straightforward.
\begin{enumerate}
\item We can view $\A$ as a full subcategory of $\lambda\text{-}\Ind(\A)$, and any object in the latter is a $\lambda$-directed colimit of objects in $\A$. Furthermore, $\lambda\text{-}\Ind(\A)$ has all $\lambda$-directed colimits.
\item Given an object $X$ in $\lambda\text{-}\Ind(\A)$ we can, by (1), write it as $X = \colim_{j \in J} A_j$ for some $\lambda$-directed $J$. Then the functor $F: \lambda\text{-}\Ind(\A) \to \K$ defined by $F(X) = \colim_{j \in J} A_j$ fixes $\A$, preserves $\lambda$-directed colimits, and is fully faithful.
\end{enumerate}
Let the notation be as in the statement of the lemma. For a fixed $i \in I$ we can compute the colimit of the diagram of $B_i$ in $\lambda\text{-}\Ind(\A)$ to find $C_i$ such that $F(C_i) = B_i$. By (2) we thus find a diagram $(C_i)_{i \in I}$ whose image under $F$ is exactly $(B_i)_{i \in I}$. Set $C = \colim_{i \in I} C_i$ so $F(C) = B$. Then $C$ is a $\lambda$-directed colimit of objects in $\A$ by (1), and so the same is true for $B$.
\end{proof}
\begin{remark}
\thlabel{iterating-lambda-directed-colimits-elementary-proof}
A more elementary, but lengthy, proof of \thref{iterating-lambda-directed-colimits} is also possible. In that case the assumption that $\K$ has all $\lambda$-directed colimits can be removed. However, we have no use for this slight generalisation, so we chose for the shorter proof above.
\end{remark}
\begin{proposition}
\thlabel{cellular-generation-implies-accessible}
Let $\M$ be a coherent continuous cellularly generated class of morphisms in a locally presentable category $\K$, such that $\K_\M$ is accessible. Then $\Eff[\M](\K)$ is accessible.
\end{proposition}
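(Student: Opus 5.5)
We fix a set $\M_0$ with $\M = \cell(\M_0)$ and choose a regular cardinal $\lambda$ large enough for the following. $\K$ is locally $\lambda$-presentable, $\K_\M$ is $\lambda$-accessible, every morphism of $\M_0$ is $\lambda$-presentable in $\K^2$ (so its domain is $\lambda$-presentable), and by \thref{coherent-m-facts}(iv) presentability ranks in $\K$ and $\K_\M$ agree above $\lambda$. By \thref{continuous-gives-directed-colimits-of-effective-squares}, $\Eff[\M](\K)$ has directed colimits computed in $\K^2$. We take as candidate generators the set $\A$ of (representatives of) morphisms $f: A \to B$ in $\M$ with $A, B$ $\lambda$-presentable.

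\textbf{Step 1: objects of $\A$ are $\lambda$-presentable in $\Eff[\M](\K)$.} Let $g = \colim g_i$ be a $\lambda$-directed colimit in $\Eff[\M](\K)$, computed in $\K^2$, and let $s: f \to g$ be an effective square with $f \in \A$. Since $f$ is $\lambda$-presentable in $\K^2$ (right-to-left direction of \thref{presentability-of-arrows}), $s$ factors essentially uniquely through some $g_i$ in $\K^2$. By \thref{coherent-m-facts}(i),(iii), after passing to a larger index, the components of the factorisation lie in $\M$. By \thref{coherent-m-facts}(ii) (left-cancellability of effective squares), the factorising square $f \to g_i$ is effective, because its composite with the effective coprojection $g_i \to g$ is effective. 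Uniqueness is inherited from $\K^2$.

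\textbf{Step 2: every $f \in \M$ is a $\lambda$-directed colimit in $\Eff[\M](\K)$ of objects of $\A$.} We use \thref{fat-small-object-argument}(i): $f: A \to B$ is the composite of a $\lambda$-good $\lambda$-directed diagram $(B_x)_{x \in P}$ with $B_\bot = A$ and links in $\Po(\M_0)$. We first treat the case where $A$ is $\lambda$-presentable. For each down-closed $Q \subseteq P$ with $|Q| < \lambda$, the colimit $B_Q$ of the restricted diagram is $\lambda$-presentable, by \thref{presentability-facts}(i) together with the smoothness of the diagram. The morphisms $A \to B_Q$ lie in $\cell(\M_0) = \M$ by \thref{fat-small-object-argument}(ii), and these small down-sets form a $\lambda$-directed family with colimit $f$. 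For $Q \subseteq Q'$, the square $(A \to B_Q) \to (A \to B_{Q'})$ has an identity on its left side, so it is trivially effective. The coprojections into $f$ are likewise effective.

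For general $A$, we write $A$ as a $\lambda$-directed colimit in $\K_\M$ of $\lambda$-presentable $A_j$ (possible since $\K_\M$ is $\lambda$-accessible). We then pull back the cell presentation: the cells of $f$ attach along maps from $\lambda$-presentable domains, which factor through some $A_j$. For pairs (small subdiagram $Q$, index $j$ through which the attaching maps of $Q$ factor) we form $f_{j,Q}: A_j \to B_{j,Q}$ by attaching only those cells. The transition squares are then pushout-like: $B_{j',Q'}$ is obtained from the pushout $A_{j'} \sqcup_{A_j} B_{j,Q}$ by attaching further cells. The map from that pushout into $B_{j',Q'}$ is therefore in $\cell(\M_0)$, so the squares are effective. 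Finally, \thref{iterating-lambda-directed-colimits} lets us combine the two levels of $\lambda$-directed colimits.

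\textbf{Main obstacle.} The hardest part is making the pairs $(j,Q)$ into a genuinely $\lambda$-directed poset. Attaching maps factor through the $A_j$ only essentially uniquely, so compatibility between different $j$ must be enforced by enlarging indices. This is where the fat small object argument is essential, since an ordinary transfinite composition cannot be truncated to small subchains. We also need to check that the vertical maps $A_j \to A$ and $B_{j,Q} \to B$ lie in $\M$ and that the squares into $f$ are effective. The first follows from continuity and coherence. For the second, we will show that the map from $A \sqcup_{A_j} B_{j,Q}$ to $B$ is a cell complex on the remaining cells, using that pushouts commute with colimits.
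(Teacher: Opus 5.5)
Your Step 1 and the case of Step 2 where $A$ is $\lambda$-presentable are fine, up to the routine check that $B_Q \to B_{Q'}$ and $B_Q \to B$ lie in $\cell(\M_0)$, which the paper also leaves implicit. The genuine gap is the general case. That is where the content of the proposition lies, you only sketch it, and the sketch rests on a false claim. In the $\lambda$-good diagram $(B_x)_{x \in P}$, the cell at an isolated $x$ is attached along a map $C_x \to B_{x^-}$, not along a map into $A = B_\bot$. So in general (unless $x^- = \bot$) the attaching maps do not factor through $A$, let alone through some $A_j$, and ``$f_{j,Q}$ obtained by attaching only those cells'' is not defined. What is needed is a recursion along the well-founded poset $P$, starting from $B_{j,\bot} = A_j$. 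Knowing inductively that $B_{x^-} = \colim_j B_{j,x^-}$ is $\lambda$-directed, factor $C_x \to B_{x^-}$ once through some $B_{j_x,x^-}$. Then put $B_{j,x} := B_{j,x^-} \sqcup_{C_x} D_x$ for $j \geq j_x$, taking colimits at non-isolated $x$. Here $j_x$ must bound all $j_y$ with $y < x$, which is exactly where $|\{y : y \leq x\}| < \lambda$ is used. Once each factorisation is chosen once and composed forward, the essential-uniqueness issue you call the main obstacle never arises. One also gets $B_{j',Q} \cong A_{j'} \sqcup_{A_j} B_{j,Q}$; together with $B_{j',Q} \to B_{j',Q'} \in \cell(\M_0)$, this makes the transition squares effective.

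The paper avoids pulling back whole cell complexes. It first treats a single pushout $g: C \to D$ of some $h: E \to F$ in $\M_\lambda$ along $a: E \to C$. Writing $C$ as a $\lambda$-directed colimit of $\lambda$-presentable $C_i$ in $\K_\M$, the single attaching map $a$ factors through the $C_i$. Pushing out $h$ along $E \to C_i$ gives $g_i \in \M_\lambda$ with $g = \colim g_i$ via pushout, hence effective, squares. It then writes $f = \colim_{x \in P} f_{\bot,x}$ in $\Eff[\M](\K)$ using the identity-on-the-left squares, which needs no presentability of the domain. It applies \cite[Lemma 4.20]{makkaiFatSmallObject2014} to see that each $f_{\bot,x}$, a composition of length $< \lambda$ of morphisms in $\Po(\M_\lambda)$, is itself in $\Po(\M_\lambda)$, and concludes with \thref{iterating-lambda-directed-colimits}. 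So the idea missing from your proposal is this reduction to a single pushout, which guarantees that the only attaching map lands in the domain. Alternatively, your route does go through if you carry out the recursion above, trading Lemma 4.20 for that bookkeeping.
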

\begin{proof}
By \thref{coherent-m-facts}(iv) there is a cardinal $\lambda'$ such that $\K$ and $\K_\M$ agree on presentability ranks above $\lambda'$. By \thref{coherent-m-facts} again, items (i) and (iv), we may assume the same for $\K^2$ and $(\K_\M)^2$. We will only consider $\lambda$-presentable objects for some $\lambda \geq \lambda'$ in this proof, so when we say ``$\lambda$-presentable'' it does not matter in which category we consider the object.

By \thref{continuous-gives-directed-colimits-of-effective-squares} the category $\Eff[\M](\K)$ has directed colimits. Let $\M_0 \subseteq \M$ be a set such that $\M = \cell(\M_0)$. Let $\lambda \geq \lambda'$ be such that both $\K$ and $\K_\M$ are $\lambda$-accessible and every morphism in $\M_0$ is $\lambda$-presentable. Let $\M_\lambda$ be a representative set of all those morphisms in $\M$ that are $\lambda$-presentable, so $\M_0 \subseteq \M_\lambda$ and thus $\M = \cell(\M_\lambda)$.

By \thref{coherent-m-facts}, items (ii) and (iii), every object in $\M_\lambda$ is also $\lambda$-presentable in $\Eff[\M](\K)$. It thus suffices to show that every morphism in $\M$ is a $\lambda$-directed colimit in $\Eff[\M](\K)$ of morphisms in $\M_\lambda$.

Let $\M^*$ be the class of morphisms in $\M$ that are a $\lambda$-directed colimit in $\Eff[\M](\K)$ of morphisms in $\M_\lambda$. We show that $\cell(\M_\lambda) \subseteq \M^*$, because then $\M \subseteq \M^*$ and we have equality because $\M^* \subseteq \M$ by definition.

Firstly, let $f \in \M_\lambda$ and let $g$ be a pushout of $f$, as in the diagram below.
\[\begin{tikzcd}
	C & D \\
	A & B
	\arrow["g", from=1-1, to=1-2]
	\arrow["\lrcorner"{anchor=center, pos=0.125, rotate=-90}, draw=none, from=1-2, to=2-1]
	\arrow["a", from=2-1, to=1-1]
	\arrow["f"', from=2-1, to=2-2]
	\arrow["b"', from=2-2, to=1-2]
\end{tikzcd}\]
Since $\K_\M$ is $\lambda$-accessible, $C$ is a $\lambda$-directed colimit in $\K_\M$ of a diagram $(C_i)_{i \in I}$ of $\lambda$-presentable objects. By \thref{presentability-of-arrows} we have that $A$ is $\lambda$-presentable in $\K$. So $a: A \to C$ factors through some $C_i$. By restricting the diagram we may thus assume that $a$ factors through every object in the diagram. For each $i \in I$ we let $g_i: C_i \to D_i$ be the pushout of $f$ as in the diagram below.
\[\begin{tikzcd}
	C & D \\
	{C_i} & {D_i} \\
	A & B
	\arrow["g", from=1-1, to=1-2]
	\arrow["\lrcorner"{anchor=center, pos=0.125, rotate=-90}, draw=none, from=1-2, to=2-1]
	\arrow["{a_i}", from=2-1, to=1-1]
	\arrow["{g_i}"{description}, from=2-1, to=2-2]
	\arrow["{b_i}"', from=2-2, to=1-2]
	\arrow["\lrcorner"{anchor=center, pos=0.125, rotate=-90}, draw=none, from=2-2, to=3-1]
	\arrow[from=3-1, to=2-1]
	\arrow["f"', from=3-1, to=3-2]
	\arrow[from=3-2, to=2-2]
\end{tikzcd}\]
Then $g_i \in \M_\lambda$ because it is the pushout of an $\M$ and $D_i$ is the pushout of $\lambda$-presentables, and hence $\lambda$-presentable. By construction, $a_i \in \M$ and so $b_i \in \M$ as it is the pushout of $a_i$. Since colimits commute, we have that $g = \colim_{i \in I} g_i$ and so $g \in \M^*$, as required.

Now let $f \in \cell(\M_\lambda)$. By the fat small object argument, \thref{fat-small-object-argument}(i), there is a $\lambda$-good $\lambda$-directed diagram $(A_x)_{x \in P}$ with links in $\Po(\M_\lambda)$, such that $f$ is its composite. For $x \leq y$ in $P$ we write $f_{xy}: A_x \to A_y$ for the corresponding morphism in the diagram. For all such $x \leq y$, the following square is $\M$-quasieffective.
\[\begin{tikzcd}
	{A_\bot} & {A_y} \\
	{A_\bot} & {A_x}
	\arrow["{f_{\bot,y}}", from=1-1, to=1-2]
	\arrow[equal,from=2-1, to=1-1]
	\arrow["{f_{\bot,x}}"', from=2-1, to=2-2]
	\arrow["{f_{xy}}"', from=2-2, to=1-2]
\end{tikzcd}\]
These squares make $(f_{\bot,x})_{x \in P}$ into a $\lambda$-directed diagram in $\Eff[\M](\K)$, whose colimit is $f$. For every $x \in P$ the restriction $(A_y)_{y \leq x}$ is still a $\lambda$-good diagram. So by \thref{fat-small-object-argument}(ii) its composite $f_{\bot,x}$ is a tansfinite composition of morphisms in $\Po(\M_\lambda)$. Moreover, because $|\{y \in P : y \leq x\}| < \lambda$ we may assume this transfinite composition to be of length $< \lambda$. Then \cite[Lemma 4.20]{makkaiFatSmallObject2014} tells us precisely that that $f_{\bot,x} \in \Po(\M_\lambda)$. Thus $f_{\bot,x} \in \M^*$ for all $x \in P$, so $f$ is a $\lambda$-directed colimit of morphisms in $\M^*$. We conclude that $f \in \M^*$, because $\M^*$ is closed under $\lambda$-directed colimits in $\Eff[\M](\K)$ by \thref{iterating-lambda-directed-colimits}.
\end{proof}
\begin{remark}
\thlabel{the-gap}
The proof of \thref{cellular-generation-implies-accessible} is based on the similar direction in \cite[Theorem 3.1]{liebermanCellularCategoriesStable2023}, but the latter contains a gap. In both cases we define $\M^*$ as in \thref{cellular-generation-implies-accessible} and show that it is closed under pushouts. The gap is in showing closure under transfinite composition, which is done inductively on the length of the chain in \cite{liebermanCellularCategoriesStable2023}. The problem is that at limit stages we generally only get directed colimits and not $\lambda$-directed colimits. So the proof is valid for $\lambda = \omega$. For uncountable $\lambda$ we solved this issue here by using the fat small object argument.
\end{remark}
\begin{lemma}
\thlabel{effective-accessible-implies-cofibrant-generation}
Let $\M$ be a continuous cofibrantly closed class of morphisms in a locally presentable category $\K$. If $\Eff[\M](\K)$ is accessible then $\M$ is cofibrantly generated.
\end{lemma}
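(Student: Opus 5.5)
The plan is to use accessibility of $\Eff[\M](\K)$ to decompose every $f \in \M$ as a $\lambda$-directed colimit, in $\Eff[\M](\K)$, of morphisms in $\M_\lambda$, and then turn such a decomposition into a cellular presentation. Retracts absorb the non-uniqueness, since $\M$ is only assumed to be cofibrantly closed. We aim to show $\M = \cof(\M_\lambda)$ for a suitable $\lambda$.

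\textbf{Choosing $\lambda$.} Let $\lambda$ be a regular cardinal such that:
\begin{itemize}
\item $\K$ is locally $\lambda$-presentable and $\Eff[\M](\K)$ is $\lambda$-accessible;
\item every $\lambda$-presentable object of $\Eff[\M](\K)$, which is a morphism in $\M$, is also $\lambda$-presentable in $\K^2$.
\end{itemize}
The second condition can be arranged by raising $\lambda$, since there is only a set of $\lambda$-presentables up to isomorphism. By \thref{continuous-gives-directed-colimits-of-effective-squares}, $\lambda$-directed colimits in $\Eff[\M](\K)$ are computed in $\K^2$. Hence every $f: A \to B$ in $\M$ is the colimit in $\K^2$ of a $\lambda$-directed diagram $(f_i: A_i \to B_i)_{i \in I}$ with each $f_i \in \M_\lambda$ and all transition squares $\M$-effective.

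\textbf{Reduction to cellular maps.} It suffices to show that each $f \in \M$ is a retract of a map in $\cell(\M_\lambda)$. We handle the domain by first pushing out along $A_i \to A$. For fixed $i$, the square from $f_i$ to $f$ is $\M$-effective, because the coprojections of the colimit in $\Eff[\M](\K)$ are morphisms there. So the map $A \sqcup_{A_i} B_i \to B$ lies in $\M$, and $A \to A \sqcup_{A_i} B_i$ lies in $\Po(\M_\lambda)$. We then iterate along a smooth chain.

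Since $I$ may not be well ordered, we follow the standard Makkai--Rosick\'y approach. Write $I$ as a union of a smooth chain of directed subsets and argue by induction on $|I|$. Subdiagrams indexed by directed subsets $J \subseteq I$ still have colimits in $\Eff[\M](\K)$, by continuity. Moreover, the comparison square from $\colim_J f_j$ to $\colim_{J'} f_j$ is $\M$-effective for $J \subseteq J'$, by left-cancellability (\thref{coherent-m-facts}(ii) requires coherence, which we lack). This is the point where I expect trouble.

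\textbf{A simpler alternative.} We can avoid cancellation by using the factorisation from the small object argument. Factor $f = pq$ with $q \in \cell(\M_\lambda)$ and $p$ having the right lifting property against $\M_\lambda$. Then show that $p$ has the right lifting property against $f$ itself, which makes $f$ a retract of $q$.

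Given a square from $f$ to $p$, restrict it along each $f_i \to f$. Lifts exist against each $f_i \in \M_\lambda$. To glue them compatibly, work with the effective pushouts $P_i = A \sqcup_{A_i} B_i$ and build the lift transfinitely along a chain cofinal in $I$. At successor steps, the map from the previous stage to the next is a pushout of the comparison map of an $\M$-effective square between $\lambda$-presentables. That map is in $\M_\lambda$, possibly after raising $\lambda$ once more so that these comparison maps between $\lambda$-presentables are $\lambda$-presentable. Lifts therefore extend. At limit stages we use smoothness; this is where continuity is needed, ensuring the colimit of the $P$'s is $B$.

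The main obstacle is making this transfinite gluing work for a $\lambda$-directed rather than well-ordered index. I would first try replacing $I$ by a $\lambda$-good poset via \thref{fat-small-object-argument}-style arguments, or alternatively use Iwamura's lemma to reduce to chains.
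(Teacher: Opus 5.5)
Your proposal has a genuine gap: neither of your two routes gets past the step you yourself flag, and that step is the whole content of the lemma.

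\textbf{Where the obstruction actually lies.} Your worry about cancellation is misplaced. For directed $J \subseteq J'$, the comparison square $\colim_J f_j \to \colim_{J'} f_j$ is automatically a morphism of $\Eff[\M](\K)$, with no coherence needed. By \thref{continuous-gives-directed-colimits-of-effective-squares}, $\colim_J f_j$ is a colimit in $\Eff[\M](\K)$, and the coprojections of $\colim_{J'} f_j$ restricted to $J$ form a cocone there.

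The real obstruction is size. A $\lambda$-directed $I$ generally has no cofinal chain. So any transfinite iteration, cellular or lifting, must pass, as you suggest, to a smooth Iwamura chain of directed subdiagrams $J_\alpha$. The successor links are then pushouts of the comparison maps $c_\alpha \colon A'_{\alpha+1} \sqcup_{A'_\alpha} B'_\alpha \to B'_{\alpha+1}$ of the squares $\colim_{J_\alpha} f_j \to \colim_{J_{\alpha+1}} f_j$. These maps are in $\M$, but they are not between $\lambda$-presentable objects, so they are not in $\M_\lambda$. Nothing in your proposal shows $c_\alpha \in \cof(\M_\lambda)$, or that $p$ lifts against $c_\alpha$. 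Your successor step in the lifting version is valid only along an honest chain of indices of $I$, which need not exist. This is the same kind of size problem that caused the gap described in \thref{the-gap}.

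\textbf{The missing idea: induction on presentability rank.} You need an induction on presentability rank in $\K^2$ that feeds such comparison maps back in as the induction hypothesis.

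The paper does this as follows. It takes $\mu$ from \cite[Proposition 4.3]{bekeAbstractElementaryClasses2012}, so that for every $\kappa \geq \mu$ both $\Eff[\M](\K)$ and $\K^2$ are $\kappa$-accessible and the inclusion preserves $\kappa$-presentables. It then proves $\M_\kappa \subseteq \cof(\M_\mu)$ by induction on $\kappa$.
\begin{enumerate}
\item For $f$ of $\K^2$-rank $\kappa^+$, it passes to an $f_i$ of $\Eff[\M](\K)$-rank $\kappa^+$ of which $f$ is a $\K^2$-retract.
\item It uses \cite[Corollary 8.9(2)]{liebermanSizesFiltrationsAccessible2020} to write $f_i$ as a retract, in $\Eff[\M](\K)$, of the colimit of a smooth $\kappa$-small chain in $\Eff[\M](\K)$.
\item \thref{build-cellular-generation-from-filtration} then puts this colimit in $\cell(\M_\kappa) \subseteq \cof(\M_\mu)$, because its comparison maps are now between $\kappa$-presentables.
\end{enumerate}

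Your first route can be repaired in the same spirit without the filtration theorem. A $\theta^+$-presentable $g \in \M$ is a $\K^2$-retract of $\colim_J f_j$ for some directed $J \subseteq I$ with $|J| \leq \theta$, since $I$ is the $\theta^+$-directed union of such $J$. An Iwamura chain in $J$ then yields comparison maps between $\max(\lambda, |J_{\alpha+1}|^+)$-presentable objects, and these are handled by induction on $\theta$. Either way, some such rank induction, with retracts absorbed by $\cof$, is what your proposal lacks.

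\textbf{A smaller point.} Raising $\lambda$ does not by itself make the $\lambda$-presentables of $\Eff[\M](\K)$ $\lambda$-presentable in $\K^2$, since that class changes with $\lambda$. You only need a set of objects generating $\Eff[\M](\K)$ under directed colimits whose members are $\lambda$-presentable in $\K^2$, and that is easy to arrange.
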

\begin{proof}
By \thref{continuous-gives-directed-colimits-of-effective-squares} $\Eff[\M](\K)$ has directed colimits and these are preserved by the inclusion functor into $\K^2$. We can thus apply \cite[Proposition 4.3]{bekeAbstractElementaryClasses2012}, which then exactly states that there is some $\mu$ such that for all $\kappa \geq \mu$ both $\Eff[\M](\K)$ and $\K^2$ are $\kappa$-accessible and the inclusion functor preserves $\kappa$-presentable objects. We claim that $\M = \cof(\M_\mu)$, for which we will show that $\M_\kappa \subseteq \cof(\M_\mu)$ by induction on $\kappa$. The base case, $\kappa = \mu$, is trivial and the limit stage follows because $\M_\kappa = \bigcup_{\kappa' < \kappa} M_{\kappa'}$ for limit $\kappa$ by \thref{successor-presentability-rank}.

Assume that $\M_\kappa \subseteq \cof(\M_\mu)$ and let $f \in \M$ be such that $\rank_{\K^2}(f) = \kappa^+$. We first argue that we may assume $\rank_{\Eff[\M](\K)}(f) = \kappa^+$. As $\Eff[\M](\K)$ is $\kappa^+$-accessible, there is a $\kappa^+$-directed diagram $(f_i)_{i \in I}$ of $\kappa^+$-presentables in $\Eff[\M](\K)$ such that $f = \colim_{i \in I} f_i$. Since the inclusion into $\K^2$ preserves directed colimits and $f$ is $\kappa^+$-presentable in $\K^2$, the identity on $f$ factors through this diagram in $\K^2$. So $f$ is a retract in $\K^2$ of $f_i$ for some $i \in I$. It thus suffices to show that $f_i \in \cof(\M_\mu)$. As the inclusion functor preserves $\kappa^+$-presentable objects, we have $\rank_{\K^2}(f_i) \leq \kappa^+$. If $\rank_{\K^2}(f_i) \leq \kappa$ then we are already done by the induction hypothesis. Otherwise $\rank_{\K^2}(f_i) = \kappa^+$ and we may replace $f$ by $f_i$.

Recall from \cite[Corollary 8.9(2)]{liebermanSizesFiltrationsAccessible2020} that in any $\kappa$-accessible category with directed colimits, any object $X$ with presentability rank at least $\kappa^+$ is a retract of an object $Y$ of the same presentability rank, and so that $Y$ admits a smooth filtration. Thus, working in $\Eff[\M](\K)$, we have that $f$ is a retract of some $f'$, which is the colimit of a smooth $\kappa$-small chain $(f_i)_{i < \delta}$. Considering this chain in $\K^2$, it is still a smooth $\kappa$-small chain because the inclusion functor preserves directed colimits and $\kappa$-presentable objects. By restricting to a cofinal subchain we may assume $\delta$ to be a cardinal, so $f' \in \cell(\M_\kappa) \subseteq \cof(\M_\mu)$ by \thref{build-cellular-generation-from-filtration}. As $f$ is a retract of $f'$ in $\Eff[\M](\K)$, it is in particular a retract of $f'$ in $\K^2$ and so $f \in \cof(\M_\mu)$, as required.
\end{proof}
\begin{lemma}
\thlabel{monos-effective-accessible-implies-cellular-generation}
Let $\M$ be a continuous cellularly closed class of monomorphisms in a locally presentable category $\K$. If $\Eff[\M](\K)$ is accessible then $\M$ is cellularly generated.

More precisely, there is a cardinal $\mu$ such that for all regular $\kappa \geq \mu$ any morphism $f \in \M$ between $\kappa$-presentable objects can be expressed as a transfinite composition of length exactly $\kappa'$ of morphisms in $\Po(\M_\mu)$, for some (possibly finite) cardinal $\kappa'$.
\end{lemma}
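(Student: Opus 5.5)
The plan is to reduce to \thref{filtrations-imply-cellular-generation}. Effective squares are in particular $\M$-quasieffective. So it suffices to find a cardinal $\mu$, at least the $\lambda$ for which $\K$ is locally $\lambda$-presentable, such that every $f\in\M$ with $\rank_{\K^2}(f)>\mu$ admits a smooth filtration in $\K^2$ consisting of $\M$-effective squares. The ``moreover'' clause about transfinite compositions of length exactly $\kappa'$ then comes directly from the ``moreover'' part of \thref{filtrations-imply-cellular-generation}. The structure follows the proof of \thref{effective-accessible-implies-cofibrant-generation}, except that we must avoid retracts, and the monomorphism hypothesis is what makes this possible.

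\emph{Setup.} By \thref{continuous-gives-directed-colimits-of-effective-squares}, $\Eff[\M](\K)$ has directed colimits, and the inclusion $\Eff[\M](\K)\hookrightarrow\K^2$ preserves them. By \cite[Proposition 4.3]{bekeAbstractElementaryClasses2012} there is $\mu\geq\lambda$ such that for all regular $\kappa\geq\mu$ both categories are $\kappa$-accessible and the inclusion preserves $\kappa$-presentable objects.

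\emph{Step 1: agreement of ranks.} Take $f\in\M$ with $\rank_{\K^2}(f)=\kappa^+$ (by \thref{successor-presentability-rank} this is the only case above $\mu$). Write $f=\colim_{i\in I} f_i$ as a $\kappa^+$-directed colimit in $\Eff[\M](\K)$ of $\kappa^+$-presentable objects. Since $f$ is $\kappa^+$-presentable in $\K^2$, the identity of $f$ factors as $f\to f_i\to f$ in $\K^2$, where $f_i\to f$ is the coprojection. The coprojection is a square whose two sides are in $\M$, hence monomorphisms, so it is a monomorphism in $\K^2$. A split epimorphism that is also a monomorphism is an isomorphism, so $f\cong f_i$. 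The coprojection is an $\M$-effective square, and hence this is an isomorphism in $\Eff[\M](\K)$ as well. Therefore $f$ is $\kappa^+$-presentable in $\Eff[\M](\K)$. Since the inclusion preserves $\kappa$-presentables, $f$ is not $\kappa$-presentable there, so $\rank_{\Eff[\M](\K)}(f)=\kappa^+$. This is where the retract argument of the cofibrant case is eliminated.

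\emph{Step 2: filtration inside $\Eff[\M](\K)$.} Every morphism of $\Eff[\M](\K)$ is a pair of monomorphisms, hence a monomorphism of $\Eff[\M](\K)$. So $\Eff[\M](\K)$ is an accessible category with directed colimits in which all morphisms are monic. For such categories I would invoke the filtration theorem of \cite{liebermanSizesFiltrationsAccessible2020}: objects of rank $\kappa^+$, for $\kappa$ above some threshold, admit a smooth filtration themselves, not merely up to retract. If needed, enlarge $\mu$ to exceed that threshold. This is the step I expect to be the main obstacle, namely locating or reproving the exact mono-version of \cite[Corollary 8.9]{liebermanSizesFiltrationsAccessible2020}. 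The first thing I would try is to rerun its proof: the retraction it produces is of the form $X\to Y\to X$, with $Y\to X$ a morphism of the category, hence monic, so the retraction collapses to an isomorphism $X\cong Y$ exactly as in Step 1.

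\emph{Step 3: transfer to $\K^2$ and conclude.} The filtration $(f_i)_{i<\delta}$ obtained in $\Eff[\M](\K)$ has all transition maps and coprojections $\M$-effective, since colimits in $\Eff[\M](\K)$ are computed as in $\K^2$. Because the inclusion preserves directed colimits and $\kappa$-presentable objects, it remains a smooth $\kappa$-small chain in $\K^2$ with colimit $f$. It is therefore a smooth filtration of $f$ in $\K^2$ by $\M$-quasieffective squares. \thref{filtrations-imply-cellular-generation} then yields $\M=\cell(\M_\mu)$ together with the statement about transfinite compositions of cardinal length.
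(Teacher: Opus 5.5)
Your proposal is correct and follows essentially the same route as the paper: Beke--Rosick\'y to obtain $\mu$, the monomorphism hypothesis to make presentability ranks in $\Eff[\M](\K)$ and $\K^2$ agree, a smooth filtration inside $\Eff[\M](\K)$, and then \thref{filtrations-imply-cellular-generation}; your Step 1 is a hands-on version of the fact that the inclusion reflects split epimorphisms, which the paper instead feeds into \cite[Proposition 3.7]{bekeAbstractElementaryClasses2012}. The step you flagged is exactly \cite[Corollary 8.9(3)]{liebermanSizesFiltrationsAccessible2020}, which the paper cites directly. Your fallback also works without rerunning any proof: in a category where every morphism is monic, every retract is already an isomorphism, so the statement follows directly from 8.9(2).
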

\begin{proof}
By \thref{continuous-gives-directed-colimits-of-effective-squares} $\Eff[\M](\K)$ has directed colimits and these are preserved by the inclusion functor into $\K^2$. Since $\M$ consists of monomorphisms, this inclusion functor also reflects split epimorphisms. By \cite[Propositions 3.7 and 4.3]{bekeAbstractElementaryClasses2012} there is thus a cardinal $\mu$ such that for all $\kappa \geq \mu$ both $\Eff[\M](\K)$ and $\K^2$ are $\kappa$-accessible and the inclusion functor preserves and reflects $\kappa$-presentable objects. Let $f \in \M$ be such that $\rank_{\K^2}(f) > \mu$. Recall from \cite[Corollary 8.9(3)]{liebermanSizesFiltrationsAccessible2020} that in any $\mu$-accessible category with directed colimits and all of whose morphisms are monomorphisms, any object of presentation rank at least $\mu^+$ admits a smooth filtration. In particular, there is a smooth filtration of $f$ in $\Eff[\M](\K)$. Since the inclusion functor preserves directed colimits and presentation ranks above $\mu$, it remains a smooth filtration of $f$ in $\K^2$. The result thus follows from \thref{filtrations-imply-cellular-generation}.
\end{proof}
\begin{lemma}
\thlabel{effective-accessible-implies-km-accessible}
Let $\M$ be a continuous composable class in $\K$ and suppose that $\Eff[\M](\K)$ is accessible. Then $\K_\M$ is accessible.
\end{lemma}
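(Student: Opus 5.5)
The idea is to obtain $\K_\M$ as the image of $\Eff[\M](\K)$ under the "codomain" functor, or better the "identity embedding" functor. Consider $I: \K_\M \to \Eff[\M](\K)$ sending an object $K$ to $id_K$ and a morphism $g: K \to L$ in $\M$ to the square with $id_K$ at the bottom, $id_L$ at the top, and $g$ on both vertical sides. This square is $\M$-effective: the relevant pushout of $L \xleftarrow{g} K \xrightarrow{id} K$ is $L$ itself, and the induced morphism into $L$ is $id_L \in \M$ (composability). So $I$ is a well-defined fully faithful functor. Conversely, any $\M$-effective square between identities $id_K \to id_L$ has both vertical maps equal to a single $g \in \M$. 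We also have the domain functor $\dom: \Eff[\M](\K) \to \K_\M$, with $\dom \circ I = id$, so $\K_\M$ is a retract of $\Eff[\M](\K)$.

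By \thref{continuous-gives-directed-colimits-of-effective-squares}, directed colimits in $\Eff[\M](\K)$ and in $\K_\M$ exist and are computed in $\K^2$ and $\K$ respectively. Hence both $I$ and $\dom$ preserve directed colimits: a directed colimit of identities in $\K^2$ is an identity, and $\dom$ is computed componentwise.

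Now we use a standard retract argument for accessibility. Choose $\lambda$ with $\Eff[\M](\K)$ $\lambda$-accessible. For an object $K$ of $\K_\M$, write $id_K = \colim_{i} f_i$ as a $\lambda$-directed colimit of $\lambda$-presentable objects $f_i: A_i \to B_i$ in $\Eff[\M](\K)$. Applying $\dom$ gives $K = \colim_i A_i$, a $\lambda$-directed colimit in $\K_\M$. It remains to bound the presentability of $\dom(f_i) = A_i$ in $\K_\M$. Here the cleanest route is to note that $\dom$ has the left adjoint-type relationship $\Hom_{\K_\M}(A, K) \cong \Hom_{\Eff}(id_A, I(K))$? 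That is not quite right, so instead I would enlarge $\lambda$: both functors $I$ and $\dom$ preserve directed colimits between accessible categories (for $\K_\M$ we do not yet know accessibility), so I would argue directly. The object $A_i$ is a retract-like image: $\Hom_{\K_\M}(A_i, -) $ versus $\Hom_{\Eff}(f_i, I(-))$. A morphism $f_i \to id_L$ in $\Eff$ is a pair $(a, b)$ with $b f_i = a$, hence determined by $b: B_i \to L$ in $\M$, so $\Hom_{\Eff}(f_i, I(-)) \cong \Hom_{\K_\M}(B_i, -)$ (effectiveness is automatic, since the pushout of $B_i \leftarrow A_i \to L$ maps to $L$ by the retraction, but one needs it in $\M$; coherence-free, I would check it is $\M$ because the pushout equals $L$ when $a = b f_i$). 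Thus $B_i = \operatorname{cod}(f_i)$ is $\lambda$-presentable in $\K_\M$, and since $\operatorname{cod}$ also preserves directed colimits, $K = \colim_i B_i$ as well. A representative set of such $B_i$ then shows $\K_\M$ is $\lambda$-accessible.

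The main obstacle is the last step: verifying the natural bijection $\Hom_{\Eff}(f_i, id_L) \cong \Hom_{\K_\M}(B_i, L)$, i.e. that every square $f_i \to id_L$ with vertical maps $b f_i, b$ is $\M$-effective. The pushout of $L \xleftarrow{b f_i} A_i \xrightarrow{f_i} B_i$ is $L$ with induced map $id_L$, since $b$ factors the cocone; so the induced morphism is an isomorphism, hence in $\M$.
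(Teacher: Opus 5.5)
Your setup is correct: $I\colon \K_\M \to \Eff[\M](\K)$, $K \mapsto id_K$, is well defined and fully faithful, $\dom \circ I = id$, and both functors preserve directed colimits. The argument fails at the step you yourself flag as the main obstacle.

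For $f_i\colon A_i \to B_i$ and $b\colon B_i \to L$ in $\M$, the square $(bf_i, b)\colon f_i \to id_L$ is in general \emph{not} $\M$-effective. The relevant pushout is $P = L +_{A_i} B_i$ of the span $L \xleftarrow{bf_i} A_i \xrightarrow{f_i} B_i$. The cocone $(id_L, b)$ exhibits $L$ as this pushout only if $f_i$ is an epimorphism, because from $u b f_i = v f_i$ you cannot conclude $ub = v$. The induced map $P \to L$ is then typically not in $\M$. Concretely, take $\Set$ with $\M$ the injections, which is a continuous composable class, and take $f_i\colon \emptyset \to 1$ and $L = 1$. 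The pushout is $1+1$ and the induced map $1+1 \to 1$ is not injective, so $\Hom_{\Eff[\M](\K)}(f_i, id_1) = \emptyset$ while $\Hom_{\K_\M}(1,1) \neq \emptyset$. Hence the bijection $\Hom_{\Eff[\M](\K)}(f_i, I(-)) \cong \Hom_{\K_\M}(B_i, -)$ fails. You get no presentability bound on $B_i$ (or on $A_i$) in $\K_\M$, and $K = \colim_i A_i = \colim_i B_i$ alone does not yield accessibility.

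The adjunction that does hold goes the other way. Since the pushout of $A \leftarrow L \rightarrow L$ along an identity is just $A$, you have $\Hom_{\Eff[\M](\K)}(id_L, f) \cong \Hom_{\K_\M}(L, \dom f)$, i.e.\ $I \dashv \dom$. But this only transfers presentability from $\K_\M$ to $\Eff[\M](\K)$, not from the generators $f_i$ (which are not identities) back to $\K_\M$. A genuine closure property of accessible categories is needed at this point.

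The paper observes that $\K_\M$ is the pullback of the inclusion $\Eff[\M](\K) \hookrightarrow \K^2$ along $F\colon \K \to \K^2$, $A \mapsto id_A$. Since that inclusion is an isofibration, this is a pseudopullback of accessible categories along directed-colimit-preserving functors. It is therefore accessible because accessible categories are closed under pseudopullbacks. Your retract $\dom \circ I = id$ could alternatively be finished by a comparable 2-categorical closure result, e.g.\ accessibility of coalgebras for the accessible idempotent comonad $I \circ \dom$. It cannot be finished by the elementary hom-set computation you propose.
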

\begin{proof}
Define a functor $F: \K \to \K^2$ by sending $A$ to the identity morphism on $A$. We then have a pullback square of categories as below.
\[\begin{tikzcd}
	{\K_\M} & \K \\
	{\Eff[\M](\K)} & {\K^2}
	\arrow[hook, from=1-1, to=1-2]
	\arrow[from=1-1, to=2-1]
	\arrow["\lrcorner"{anchor=center, pos=0.125}, draw=none, from=1-1, to=2-2]
	\arrow["F", from=1-2, to=2-2]
	\arrow[hook, from=2-1, to=2-2]
\end{tikzcd}\]
Clearly, the bottom arrow is an isofibration, so the above square is a pseudopullback \cite[Theorem 1]{joyalPullbacksEquivalentPseudopullbacks1993}. By \thref{continuous-gives-directed-colimits-of-effective-squares} the inclusion at the bottom preserves directed colimits, and $F$ clearly preserves directed colimits. We conclude that $\K_\M$ is accessible, because accessible categories are closed under pseudopullbacks \cite[Exercise 2.n]{adamekLocallyPresentableAccessible1994}.
\end{proof}

We finish this section with an example of a cellularly generated class (in $\Ab$) that is not continuous, so \thref{cellular-generation-lfp} applies, but \thref{cellular-generation-continuous} does not. This shows that the former theorem really is more general.
\begin{theorem}
\thlabel{FreeQuotient}
In $\Ab$, let $\M$ denote the class of monomorphisms with free cokernel. Then $\M$ is cellularly generated by the single map $0 \to \Z$, but $\Eff[\M](\Ab)$ lacks directed colimits (in particular, $\M$ is not continuous in $\Ab$).
\end{theorem}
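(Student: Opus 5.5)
The plan has two independent parts. First I prove that $\M=\cell(\{0\to\Z\})$. Then I exhibit a directed diagram in $\Eff[\M](\Ab)$ that has no colimit there.

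\textbf{Cellular generation.} Write $\M_0=\{0\to\Z\}$.
\begin{itemize}
\item For $\cell(\M_0)\subseteq\M$: a pushout of $0\to\Z$ along $0\to A$ is $A\to A\oplus\Z$, and isomorphisms lie in $\M$. Let $(A_i)_{i\le\delta}$ be a smooth chain whose links are such pushouts. Every link is then a split mono with cokernel $0$ or $\Z$, generated by the image of the new $\Z$-summand.
\item By induction on $\delta$, $A_0\to A_\delta$ is mono and $A_\delta/A_0$ is free on the images of these new generators. At limit stages this uses that directed colimits in $\Ab$ are unions, so a basis of the union is the union of the bases.
\item For $\M\subseteq\cell(\M_0)$: given a mono $f:A\to B$ with $B/A$ free on a basis $(e_\xi)_{\xi<\kappa}$, lift the basis to $b_\xi\in B$. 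Then $B=A\oplus\bigoplus_{\xi<\kappa}\Z b_\xi$.
\item Put $A_\alpha=A\oplus\bigoplus_{\xi<\alpha}\Z b_\xi$. This chain is smooth, each link $A_\alpha\to A_{\alpha+1}$ is the pushout of $0\to\Z$ along $0\to A_\alpha$, and its composite is $f$. In particular $\M$ is cellularly closed.
\end{itemize}

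\textbf{Failure of directed colimits.} I want a directed diagram of $\M$-effective squares whose colimit in $\Ab^2$ leaves $\M$. Countable chains cannot work for the following reason. Take an effective square between $A_n\to B_n$ and $A_{n+1}\to B_{n+1}$. Mono-ness of the pushout map gives $A_{n+1}\cap B_n=A_n$. Freeness of its cokernel says that $B_{n+1}/(B_n+A_{n+1})$ is free. Together these say that the induced map on cokernels $B_n/A_n\to B_{n+1}/A_{n+1}$ is injective with free cokernel, so along a countable chain the colimit cokernel stays free. I would therefore use a large directed system with a non-free but ``locally free'' colimit.

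Take $G=\Z^\omega$, the Baer--Specker group. I would use (from Specker/the standard theory) that $G$ is not free and that every finite subset of $G$ lies in a finite-rank direct summand $S$ of $G$. Let $P$ be the poset of finite-rank summands of $G$ under inclusion. This is directed: given $S,S'$, take a finite-rank summand containing bases of both. Consider the diagram $(0\to S)_{S\in P}$ in $\Eff[\M](\Ab)$, with squares given by $\mathrm{id}_0$ and the inclusions $S\subseteq S'$.
\begin{itemize}
\item Each object is in $\M$, because $S$ is free.
\item Each square has all four arrows in $\M$, because $S$ is a summand of $G$, hence of $S'$, so $S'/S$ is a summand of the free group $S'$.
\item Each square is effective, because the relevant pushout is $S$ itself, mapping to $S'$ by a map in $\M$.
\end{itemize}
The colimit in $\Ab^2$ is $0\to\bigcup P=G$. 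This is not in $\M$ since $G$ is not free, which already shows that $\M$ is not continuous.

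\textbf{Main obstacle.} The hardest step is upgrading ``the $\Ab^2$-colimit is not in $\M$'' to ``no colimit exists in $\Eff[\M](\Ab)$ at all''. I would argue by contradiction. Suppose $g:C\to Y$ is a colimit. Cocone legs are effective squares from $0\to S$ to $g$, which amounts to maps $S\to Y$ for which $S\oplus C\to Y$ is in $\M$.
\begin{itemize}
\item One cocone: the free groups $0\to F$ with $F$ free of large rank. There I would try to show that the compatible maps $S\to F$ can be chosen so that every finite stage is effective but the induced comparison forces $G$ to embed into $Y/C$ compatibly.
\item A second cocone: varying the targets by automorphisms of free summands. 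I would use it to derive that the comparison maps out of $g$ cannot be unique, or would force $G\cong$ a subgroup of a free group with free quotient.
\end{itemize}
I expect the first try to be the cocone into $0\to G\oplus F$ combined with the uniqueness clause, comparing two different compatible families of maps $S\to F$ that agree on no cofinal subsystem. Making this concrete is where the real work lies.
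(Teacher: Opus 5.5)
Your cellular-generation argument is fine. Your diagram $(0\to S)_{S\in P}$ over the finite-rank summands of $G=\Z^\omega$ is a legitimate directed diagram in $\Eff[\M](\Ab)$, and its $\Ab^2$-colimit $0\to G$ lies outside $\M$, which settles non-continuity. But the actual claim, that $\Eff[\M](\Ab)$ lacks directed colimits, is left unproved. Moreover, the cocones your sketch relies on do not exist:
\begin{itemize}
\item $0\to G\oplus F$ is not an object of $\Eff[\M](\Ab)$, since $G\oplus F$ is not free.
\item A compatible family $S\to F$ into a free group $F$ with every square effective is impossible. Effectiveness makes each $S\to F$ injective, so the family would glue to an embedding of $G$ into $F$.
\end{itemize}
That last observation is the missing idea, and it applies to every cocone. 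A cocone with vertex $g\colon C\to Y$ consists of compatible maps $\phi_S\colon S\to Y$ such that the induced map $C\oplus S\to Y$ out of the pushout is in $\M$. Since $g$ is monic, injectivity of this map implies that $S\to Y/C$ is injective. So the $\phi_S$ glue to an injection $G\to Y/C$, and $Y/C$ is free because $g\in\M$. Subgroups of free abelian groups are free and $G$ is not, so your diagram has \emph{no cocone at all} in $\Eff[\M](\Ab)$, hence no colimit. Adding this argument completes your proof, granting the cited fact that finite subsets of $\Z^\omega$ lie in finite-rank summands. It is then a genuinely different route from the paper's.

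Your claim that countable chains cannot work is also mistaken. Your computation only shows that along a countable chain the $\Ab^2$-colimit stays in $\M$. It does not show that the colimit exists in $\Eff[\M](\Ab)$. The paper uses exactly a countable chain, $(0\to A_k)_{k<\omega}$ with $A_k=\langle e_i-2e_{i+1} : i\le k\rangle\subseteq C=\bigoplus_n\Z e_n$:
\begin{itemize}
\item Here $f\colon 0\to A_\omega$ is in $\M$ and is itself a cocone in $\Eff[\M](\Ab)$.
\item If $g$ were a colimit, the comparison maps $f\to g\to f$ would compose to $\mathrm{id}_f$. So $g\to f$ is a split epimorphism with monic components, hence an isomorphism.
\item The effective cocone into $0\to C$ would then have to factor through the square with $A_\omega\hookrightarrow C$. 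But its cokernel contains a nonzero element divisible by every $2^n$, so it is not free.
\end{itemize}
The paper's route is elementary (one countable free group, no Baer--Specker theory) but needs the retraction argument. Yours, once repaired, relies on the non-freeness and separability of $\Z^\omega$, but yields the stronger conclusion that the diagram admits no cocones whatsoever.
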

\begin{proof}
That $\M$ is cellularly generated by $0 \to \Z$ follows easily from the fact that every free abelian group is a direct sum of copies of $\Z$. 

Before showing that  $\Eff[\M](\Ab)$ lacks directed colimits, it is illustrative to first see why $\M$ is not continuous.  This construction appears in \cite[VII, Proposition 1.1]{MR1914985}. Consider the countable free abelian group $C$ with basis $\langle e_n : n \in \omega \rangle$, and for each $k \geq 0$ let $C_k = \langle e_0, e_1, \dots, e_k \rangle$, $u_k = e_k-2e_{k+1}$, and $A_k = \langle u_0, u_1, \dots, u_k \rangle$.  Note that $u_0,u_1,\dots, u_k$ are linearly independent and hence form a free basis of $A_k$. So each inclusion $A_k \subset A_{k+1}$ splits, and the quotient is free of rank $1$. For each $k$, the quotient $C/A_k$ is free, because the inclusion $A_k \to C_{k+1}$ splits, via a change-of-basis on the latter that sends $e_i$ to $u_i$ for $i \le k$, and fixes $e_{k+1}$. This change of basis matrix is invertible over the integers. So the chain $A_0 \subset A_1 \subset \dots$, together with their inclusion into $C$, form a cocone in $\Ab_\M$.  Let $A_\omega$ be the union (colimit) of the $A_k$'s. So the inclusion $A_\omega \to C$ is the unique map in $\Ab$ commuting with the cocone. But $C/A_\omega$ is not free, because in that quotient, $[e_0] = 2^n [e_n]$ for every $n \ge 0$ (and $[e_0]$ is nonzero in $C/A_\omega$). So it has a nonzero element that is divisible by $2^n$ for every $n > 0$.

We extract more out of this example to show $\Eff[\M](\Ab)$ lacks directed colimits.  Consider the commuting diagram
\[\begin{tikzcd}[sep=small]
	{A_0} & {A_1} & {A_k} & {A_{k+1}} & {A_\omega} \\
	0 & 0 & 0 & 0 & 0
	\arrow[from=1-1, to=1-2]
	\arrow[dashed, from=1-2, to=1-3]
	\arrow[from=1-3, to=1-4]
	\arrow[dashed, from=1-4, to=1-5]
	\arrow["{f_0}", from=2-1, to=1-1]
	\arrow[from=2-1, to=2-2]
	\arrow["{f_1}", from=2-2, to=1-2]
	\arrow[dashed, from=2-2, to=2-3]
	\arrow["{f_k}", from=2-3, to=1-3]
	\arrow[from=2-3, to=2-4]
	\arrow["{f_{k+1}}", from=2-4, to=1-4]
	\arrow[dashed, from=2-4, to=2-5]
	\arrow["f"', from=2-5, to=1-5]
\end{tikzcd}\]
where all maps are inclusions. Each square $f_k \to f_n$ (for $n \geq k$) is $\M$-effective, since its pushout is $A_k$, and $A_n/A_k = \langle u_{k+1},\dots u_n \rangle$ is free (and similarly for $f_k \to f$).  So $f$ is a cocone of $(f_k)_{k < \omega}$ in $\Eff[\M](\Ab)$.

Suppose towards a contradiction that $(f_k)_{k < \omega}$ has a colimit $g$ in $\Eff[\M](\Ab)$. Since $f$ is the colimit in $\Ab^2$ there is a morphism of cocones $f \to g$ in $\Ab^2$, and since $f$ forms a cocone of $(f_k)_{k < \omega}$ in $\Eff[\M](\Ab)$ there is also be a morphism of cocones $g \to f$. We thus have a morphism of cocones $f \to g \to f$, which by uniqueness must be $id_f$. So $g \to f$ is a retract (in $\Ab^2$), and it is also a monomorphism (because it comes from $\Eff[\M](\Ab)$), so it is an isomorphism and hence $g \cong f$. So $f$ must be the colimit of $(f_k)_{k < \omega}$ in $\Eff[\M](\Ab)$.

On the other hand, since $C/A_k$ is free for each $k < \omega$, the square below is $\M$-effective.
\[\begin{tikzcd}[sep=small]
	{A_k} & C \\
	0 & 0
	\arrow[from=1-1, to=1-2]
	\arrow["{f_k}", from=2-1, to=1-1]
	\arrow[from=2-1, to=2-2]
	\arrow["h"', from=2-2, to=1-2]
\end{tikzcd}\]
So $h$ is a cocone of $(f_k)_{k < \omega}$ in $\Eff[\M](\Ab)$. Then there would be an induced morphism $f \to h$ in $\Eff[\M](\Ab)$, which also must be the induced arrow in $\Ab^2$. This induced morphism must thus be given by the commuting square below, where $A_\omega \to C$ is the inclusion from earlier.
\[\begin{tikzcd}[sep=small]
	{A_\omega} & C \\
	0 & 0
	\arrow[from=1-1, to=1-2]
	\arrow["f", from=2-1, to=1-1]
	\arrow[from=2-1, to=2-2]
	\arrow["h"', from=2-2, to=1-2]
\end{tikzcd}\]
We also saw before that $C/A_\omega$ is not free, so the inclusion is not in $\M$ and we conclude that the square is not $\M$-effective, contradicting that it is a morphism in $\Eff[\M](\Ab)$. We also note that it is not even $\M$-quasieffective, as $A_\omega$ is also the relevant pushout for that square and so the induced morphism from the pushout into $C$ is the same problematic inclusion. 
\end{proof}

\appendix

\section{L\'evy complexities}
\label{sec_LevyComplexity}
\begin{lemma}
\thlabel{levy-complexity-colimits-mod-t}
Let $T$ be a limit theory in a finitary language $\L$, and let $\Phi(C,D,T,\L)$ denote the relation of $C$ being a colimit of the diagram $D$ in $\Mod(T)$. Then $\Phi$ is $\Sigma_1$-expressible in $(V,\in)$.
\end{lemma}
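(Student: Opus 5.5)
The strategy is to replace the universal property, which quantifies over all cocones and is therefore $\Pi$-heavy, by the explicit construction of colimits in $\Mod(T)$. Every quantifier in that construction can then be bounded by witnesses that we guess existentially. Concretely, I will show that $C$ is a colimit of $D: \C \to \Mod(T)$ if and only if there exist the following witnesses:
\begin{itemize}
\item a set $\Term$ of formal $\L$-terms over the disjoint union $X = \bigsqcup_{c} D(c)$, closed under term formation;
\item the least congruence $\sim$ on $\Term$ generated by the diagram relations, the operations of the $D(c)$, and the limit axioms of $T$;
\item an isomorphism between $C$ (together with its coprojections) and the quotient structure $\Term/{\sim}$.
\end{itemize}
The intended description of this quotient is that of \cite[Theorem 5.9]{adamekLocallyPresentableAccessible1994}: it is the free $\L$-structure on $X$ modulo the identifications forced by the diagram, reflected into $\Mod(T)$. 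The reflection can be carried out by iterating the following step $\omega$ times, because $\L$ is finitary: whenever $\phi(\bar a)$ holds, adjoin the unique witnesses $\bar y$ and force uniqueness. All of these stages can be packaged as a single $\omega$-indexed sequence of sets, which is again one existentially quantified witness.

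The key point is that once these witnesses are given, every remaining clause is $\Delta_0$ in the witnesses together with $D$, $T$, $\L$ and $\omega$. This covers statements such as ``$\Term$ is the set of all terms'', ``each stage is obtained from the previous one by the prescribed closure'', ``$\sim$ is a congruence'', ``the quotient satisfies $T$'' and ``the map to $C$ is a bijective homomorphism commuting with the coprojections''. Checking that $\sim$ is the \emph{least} relation with these properties would naively need a universal quantifier over all relations. I will avoid this by defining $\sim$ as an explicit union of stages $\sim_n$, each obtained from $\sim_{n-1}$ by a one-step closure. Each such closure step is $\Delta_0$ in the sequence $(\sim_n)_n$, and the existence of that sequence is asserted existentially. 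Finitarity ensures that terms are finite sequences and that each closure step involves only finitely many elements, so all of these quantifiers are bounded by sets we have already guessed.

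It remains to check correctness of the equivalence. The forward direction holds because the explicit construction exists in $V$ and gives a colimit, and any colimit $C$ is isomorphic to it compatibly with the coprojections. The converse holds because the construction provably gives a colimit (ZFC proves this), and a structure isomorphic over the coprojections is again a colimit. Since the whole characterisation is of the form $\exists(\text{witnesses})\,\Delta_0$, $\Phi$ is $\Sigma_1$.

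I expect the main obstacle to be the careful bookkeeping needed to show that the iterative reflection into $\Mod(T)$ is correctly captured by a $\Delta_0$ condition on the guessed $\omega$-sequence. In particular, the clauses for the unique existence of witnesses and for identifying duplicate witnesses must be expressible with quantifiers bounded by the stages. My first attempt at this will be to interleave the term-building and the congruence-generation within the same stages, and to verify each stage as a bounded ``one-step closure'' relation.
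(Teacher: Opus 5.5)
Your plan is essentially the paper's proof. Both replace the universal property by the explicit construction: a term-algebra quotient giving the colimit in $\Str(\L)$, followed by an $\omega$-fold iteration that adjoins witnesses for the limit axioms and identifies duplicate ones. Both then guess all intermediate stages existentially and check the rest with bounded quantifiers. Your packaging into a single $\omega$-sequence with explicit congruence stages $\sim_n$ differs from the paper's split into $\Phi_1,\Phi_2,\Phi_3$ only in bookkeeping, and it spares you the paper's appeal to Collection for putting $\forall n\in\omega$ in front of a $\Sigma_1$ relation.

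Do fix your first bullet, though: the quotient must be of terms over $X$ together with all adjoined witness constants, not over $X$ alone. For example, take $\forall x_1x_2\,(P(x_1)\wedge Q(x_2)\to\exists!y\,S(x_1,x_2,y))$ in a purely relational language. The coproduct of the model $\{a\}$ with $P(a)$ and the model $\{b\}$ with $Q(b)$ has a third element, and no term over $\{a,b\}$ names it.
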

\begin{proof}
Let $\Str(\L)$ denote the category of $\L$-structures, and the morphisms are homomorphisms. A representative colimit of $D$ in $\Mod(T)$ can be constructed in two steps:
\begin{enumerate}[label=(\Alph*)]
    \item\label{item_colim_in_StrL} First construct the colimit of $D$ in $\Str(\L)$.
    \item\label{item_reflect_into_Mod_T} Then take the reflection of that $\L$-structure in $\Mod(T)$.
\end{enumerate}
So $\Phi(C,D,T,\L)$ holds if and only if $C$ is isomorphic to that particular representative; i.e., if and only if the following assertion holds:
\begin{align*}
\exists C_2 \ \exists C_1 \ \exists f \ \Big( & \underbrace{C_1  \text{ is the canonical } \Str(\L)\text{-colimit of } D }_{\Phi_1(C_1,\L, D)}  \\
& \wedge \  \underbrace{C_2 \text{ is the canonical reflection of } C_1 \text{ from } \Str(\L) \text{ into } \Mod(T)}_{\Phi_2(C_2,C_1,\L, T)} \\
& \wedge  \ \underbrace{ f \text{ is an isomorphism from } C \text{ to } C_2}_{\Phi_3(f,C,C_2,\L)}   \Big)
\end{align*}
It suffices to show that each of $\Phi_1$, $\Phi_2$, and $\Phi_3$ is $\Sigma_1$-expressible.  

The statement $\Phi_3(f,C,C_2,\L)$ is in fact $\Delta_0$-expressible, because it is the conjunction of ``$f$ is an $\L$-homomorphism from $C$ to $C_2$'' and ``$f$ is a bijection from $C$ to $C_2$'', each of which is expressible using only bounded quantifiers.  The assumption that $\L$ is a \emph{finitary} language is crucial for the $\Delta_0$-expressibility of ``$f$ is a homomorphism'', since it depends on the fact that if $A$ is a set, the set $A^{<\omega}$ is $\Delta_0$-definable in ZF from the parameter $A$ \cite[Chapter IV, Theorem 5.3]{MR597342}.

Next we show that $\Phi_1$ is $\Sigma_1$-expressible.  Let $\mathcal{A}$ be the (small) domain of the functor $D$. Then the colimit of $D$ in $\Str(\L)$ can be constructed as follows: 
\begin{enumerate}
    \item  Let $X$ be the disjoint union of the $\mathcal{L}$-structures indexed by $D$, i.e., 
\[
X= \{ (a,x) : a \in \Obj(\A) \text{ and } x \in D(a) \}.
\]
   This set is $\Delta_0$-definable from the parameter $D$, because $z \in X$ if and only if $\exists a \in \Obj(\A)$ and $\exists x \in D(a)$ such that $z=(a,x)$.  That last sentence still has some untranslated parts; e.g., since we view $\A=\dom(D)$ as the tuple $(\Obj(\A), \Mor(\A), \circ_\A, (id_a)_{a \in \Obj(\A)})$, the fragment ``$\exists a \in \Obj(\A)$ such that \dots" is short for ``there exists an $a$ that is an element of the first coordinate of $\dom(D)$ such that \dots".  That this and the other shorthands are also $\Delta_0$-expressible follows from the complexity computations in \cite[Chapter IV]{MR597342}, which we will implicitly use throughout this argument. 
   
   \item Let $\L_X$ augment $\L$ by adding a new constant symbol $d_{a,x}$ for every $(a,x) \in X$, and let $\Term(\L_X)$ be the set of $\L_X$-terms. There are various ways one could code up $\L_X$, but the key point is that we are working in a finitary language, so the terms are members of $\L_X^{<\omega}$, which is a $\Delta_0$-definable from the parameter $\L_X$ \cite[Chapter IV Theorem 5.3]{MR597342}.  This would \emph{not} be true if the language were infinitary.
   
   \item Let $R$ be the equivalence relation on $\Term(\L_X)$ generated by declaring $d_{a,x}$ equivalent to $d_{b,y}$ if there is a morphism $f: a \to b$ in $\A$ such that $D(f)(x)=y$, and  insisting that for every function symbol $h$ in $\L$ of arity $n$, if $t_i$ is equivalent to $t'_i$ for all $i \le n$ then $h(t_1,\dots,t_n)$ is equivalent to $h(t'_1, \dots, t'_n)$.  Then $R$ is a $\Delta_0$-definable subset of $\Term(\L_X)^2$ using the parameters $D$, $X$, and $\L$.

   \item Finally, the colimit of $D$ in $\Str(\L)$ has underlying set $\Term(\L_X) / R$ (which is $\Delta_0$-definable from $R$ and $\Term(\L_X)$), with the relation and function symbols from $\L$ interpreted in the minimal way to ensure that the maps $D(a) \to \Term(\L_X) / R$, which send elements to their $R$-equivalence classes, are homomorphisms.
\end{enumerate}

To summarise:  $\Phi_1(C_1,\L,D)$ can be expressed by
\begin{align*}
\exists X \  \exists \Gamma \  \exists R \   \Big( &X \text{ is the disjoint union of $D$-indexed $\L$-structures, } \\
&  \Gamma \text{ is the set of $\mathcal{L}_X$-terms,} \\
& R \text{ is the equivalence relation described above,} \\
& \text{and $C_1$ is the quotient $\Gamma/R$} \Big),
\end{align*}
where the part in parentheses is $\Sigma_1$-expressible (in fact, $\Delta_0$-expressible) in the parameters $D$ and $\mathcal{L}$.

Finally, we show why $\Phi_2(C_2,C_1,\mathcal{L},T)$---which says $C_2$ is the canonical reflection of $C_1$ from $\Str(\L)$ to $\Mod(T)$---is $\Sigma_1$-expressible.  Each sentence in $T$ is by assumption a limit sentence $\forall \bar{x} (\phi(\bar{x}) \to \exists! \bar{y} \psi(\bar{x}, \bar{y}))$, which is equivalent to the two sentences $\forall \bar{x} (\phi(\bar{x}) \to \exists \bar{y} \psi(\bar{x}, \bar{y}))$ and $\forall \bar{x} \bar{y} \bar{y}' (\phi(\bar{x}) \wedge \psi(\bar{x}, \bar{y}) \wedge \psi(\bar{x}, \bar{y}') \to \bar{y} = \bar{y}')$. Using this trick to get rid of ``unique'' part in the existential quantifiers, we may assume that $T$ consists of the formulas 
\[
\forall \overline{x}_i \left(\underbrace{ \varphi_i(\overline{x}_i) \to \exists \overline{y}_i \psi_i(\overline{x}_i,\overline{y}_i) }_{\tau_i(\overline{x}_i)} \right) 
\]
for all $i$ in some index set $I$, where each $\varphi_i$ and each $\psi_i$ is a conjunction of atomic formulas.

For an $\mathcal{L}$-structure $A$, define $B(A) \in \Str(\L)$ and $\pi(A): A \to B(A)$ as follows; this will be the procedure to be iterated in constructing a reflection of $A$.
\begin{enumerate}
    \item Let $\L'$ be the language $\L$ augmented with the following new constant symbols. For each $a$ in $A$ we introduce a symbol $c_a$. Furthermore, for each $i \in I$ and each tuple $\bar{a}$ in $A$ such that $A \models \phi_i(\bar{a})$ we introduce a tuple $\bar{d}_{i,\bar{a}}$ of constant symbols matching the length of $\bar{y}_i$. Let $X$ be this set of new constant symbols. Then $X$ is $\Delta_0$-definable from the parameters $A$, $T$, and $\L$.

    \item Let $E$ be the set of relations on $X$ consisting of the following.
    \begin{enumerate}
        \item The atomic diagram of $A$ (i.e., if $R$ is a relation symbol in $L$ and $A \models R(a_1,\dots,a_k)$ then $R(c_{a_1}, \dots, c_{a_k}) \in E$)
        \item For each $\bar{a} = (a_1,\dots,a_k)$ in $A$ and each $i \in I$ such that $A \models \varphi_i(\bar{a})$, $E$ includes each conjunct in the formula $\psi_i(c_{a_1},\dots,c_{a_k},\bar{d}_{i,\bar{a}})$ (recall that $\psi_i$ is a conjunction of atomic formulas)
    \end{enumerate}
    The set $E$ is $\Sigma_1$-definable from the parameters $A$, $\L$, and $T$.
    
    \item Let $B(A)$ be the free $\L$-structure on $X$ modulo the relations in $E$ and let $\pi(A): A \to B(A)$ be the factor map.  This is $\Sigma_1$-definable from $X$ and $E$. 
\end{enumerate}
The relation $\Theta(A,\pi,B)$ asserting that ``$B=B(A)$ and $\pi = \pi(A)$'' is $\Sigma_1$-expressible in $(V,\in)$ using the parameters $\L$ and $T$. By construction, $B(A)$ has the property that $\tau_i(\bar{b})$ holds for every $i \in I$ and every $\bar{b}$ of the appropriate length in the image of the map $\pi(A): A \to B(A)$.

These facts, together with the freeness of $B(A)$ modulo the relations, ensure that for an $\L$-structure $A$, a reflection $A_T$ of $A$ in $\Mod(T)$ can be constructed as a colimit (in $\Str(\L)$) of the chain $(A_n)_{n < \omega}$, where $A_0:=A$, $A_{n+1}:=B(A_n)$, and the link $A_n \to A_{n+1}$ is $\pi(A_n)$. The coprojection $\pi_0: A_0 \to A_T$ is a reflection of $A$ in $\Mod(T)$.  And $\Phi_2(C_2,C_1,\L,T)$ (i.e., the assertion that $C_2=(C_1)_T$) can be expressed by saying that there is a function $F$ with domain $\omega$ such that:
\begin{itemize}
    \item for all $n < \omega$, $F(n)$ is an ordered pair such that 
        \[
        \Theta\Big( (F(n))_0, (F(n))_1, (F(n+1))_0 \Big),
        \]
        and $(F(0))_0=C_1$;
    \item $C_2=\colim_{\Str(\L)} F$.
\end{itemize}
The first item is $\Sigma_1$-expressible because in ZF, if a relation $R(u,v,\dots)$ is $\Sigma_1$-expressible then so is the relation $\forall u \in v \ R(u,v,\dots)$; the proof uses the Collection Principle (see \cite[Lemma 13.10(i)]{MR1940513}). The second item is $\Sigma_1$-expressible by the earlier part of the proof that $\Phi_1$ was $\Sigma_1$-expressible; i.e., colimits in the category of $\L$-structures are $\Sigma_1$-definable.
\end{proof}

\bibliographystyle{alpha}
\bibliography{bibfile}

\end{document}